\newcommand{\R}{\mathbb R}
\newcommand{\N}{\mathbb N}
\newcommand{\X}{\mathfrak X}
\newcommand{\G}{\mathcal G}
\newcommand{\F}{\mathcal F}
 \newtheorem{theo}{Theorem}[section]
 \newtheorem{coro}[theo]{Corollary}
 \newtheorem{lemm}[theo]{Lemma}
 \newtheorem{prop}[theo]{Proposition}
\numberwithin{equation}{section}
\numberwithin{equation}{subsection}
\date{}
\begin{document}

\begin{center}
\textbf{{\LARGE An approach for metric space with a convex combination operation and applications}}
\vspace{20pt}

{\bf Nguyen Tran Thuan}\footnote[1]{Email: thuan.nguyen@vinhuni.edu.vn. Tel.: +84913099395}\\
\emph{Department of Mathematics, Vinh University, Nghe An Province, Vietnam}
\end{center}

{\bf Abstract.}
In this paper, we embed metric space endowed with a convex combination operation, named convex combination space, into a Banach space and the embedding preserves the structures of metric and convex combination. For random element taking values in this kind of space, applications of embedding are also established. On the one hand, some nice properties of expectation such as representation of expected value through continuous affine mappings, the linearity of expectation will be given. On the other hand, the notion of conditional expectation will be also introduced and discussed. Thanks to embedding theorem, we establish some basic properties of conditional expectation, Jensen's inequality, convergences of martingales and ergodic theorem.

{\bf Mathematics Subject Classifications (2000):} 60B05, 60F15, 51K05.

{\bf Key words:} Embedding; Metric space; Convex combination; Ergodic theorem; Jensen's inequality.
\section{Introduction} 


Probability theory in linear spaces has long been considered and extended to more general models which are nonlinear, such as hyperspaces of linear space or metric spaces generally. Basic objects such as expectation, conditional expectation of random element taking values in metric space also have attracted attention of many researchers. Probably the first
author introduced a concept of mathematical expectation of a random element with values in a metric space was Doss \cite{Do} in 1949. After this paper, other authors gave many different definitions of expectation and conditional expectation in different kinds of metric spaces via various ways.  We can mention the works of \'{E}mery and Mokobodzki \cite{EM}, Herer \cite{He1, He2, He3}, Raynaud de Fitte \cite{Fi}, Sturm \cite{CS, St}, or the monograph of Molchanov \cite{Mo}. 

In 2006, Ter\'{a}n and Molchanov \cite{TM} introduced the concept of  convex combination space and the class of these spaces is  larger than not only the class of Banach spaces but also the class of hyperspace of compact subsets, as well as the class of upper semicontinuous functions (also called fuzzy sets) with compact support in Banach space \cite{TM}. Besides, the authors also provided many interesting illustrative examples of this concept, e.g., the space of all cumulative distribution functions or the space of upper semicontinuous functions with $t$-norm.  Convex combination space is a metric space endowed with a convex combination operation and the extension from
linear space to convex combination space is not trivial. Some very basic sets, such as singletons and balls, may fail to be convex in convex combination space. This may not match with usual intuition but occurs in many practical models. For example, consider the hyperspace of all compact subsets of Banach space with the convex combinations being generated by the Minkovski addition and scalar multiplication. Then $\lambda A +(1-\lambda)A$ does not equal to $A$ unless $A$ is convex, it means that $A$ is non-convex singleton in such a space. Another example is the space of integrable probability distributions, where the convex combinations is generated by the convolution operation (see \cite{TM, Te}). For random element taking values in convex combination space, its expected value was constructed by Ter\'{a}n and Molchanov. This notion of expectation extended the corresponding one when considering not only in Banach space but also in hyperspace of compact subsets. Furthermore, the authors also established the Etemadi strong law of large numbers (SLLN) for normalized sums of pairwise independent, identically distributed (i.i.d.) random elements  in this kind of space (\cite{TM}, Theorem 5.1), other applications can be found in \cite{QT, Te, TQN}.

Although convex combination space may have many singletons being not convex, it always contains a  subspace (we will call convexifiable domain) in which every singletons and balls are convex, moreover the authors in \cite{TM} shown that this subspace has some properties resembling linearity. Therefore, it is natural to ask whether this convexifiable domain can be embedded isometrically into some normed linear space such that the structure of convex combination is preserved. A worth note is that the expectation of every integrable random element taking values in convex combination space always belongs to this convexifiable domain. Therefore, if embedding is established, we will have more tools to explore this type of expectation as well as properties of convex combination space.

In this paper, we will answer the question mentioned above. Namely, we will show that the convexifiable domain of a complete convex combination space can be embedded into a Banach space such that the embedding is isometric and  the structure of convex combination is preserved, this will be presented in Section 3. 

Main applications of the approach via embedding theorem will be presented in Section 4. On the one hand, some nice properties of expectation including both representation of expected value through continuous affine mappings and Jensen's inequality (was proved first by Ter\'{a}n \cite{Te} and will be proved again in this work in another way) will be given. On the other hand, the notion of conditional expectation of integrable random element taking values in convex combination space will be also introduced and discussed. Thanks to embedding theorem, we establish some basic properties of conditional expectation, Jensen's inequality, convergences of martingales and ergodic theorem.

Finally, some miscellaneous applications and remarks will be discussed in Section 5.

\section{Preliminaries}

For the reader's convenience, we now present a short introduction to the approach
given by Ter\'an and Molchanov in \cite{TM}. Let
$(\X,d)$ be a metric space, for $u, x\in \X$, we denote $\|x\|_u:=d(u, x)$. Based on
$\X$, introduce a \emph{convex combination operation}, which
for all $n\geqslant 2$, numbers $\lambda_1,\ldots,\lambda_n>0$ that satisfy
$\sum_{i=1}^n\lambda_i=1$, and all $u_1,\ldots, u_n\in\X$,
this operation produces an element of $\X$, which is denoted by $[\lambda_1,u_1;\ldots;\lambda_n,u_n]$
 or $[\lambda_i,u_i]_{i=1}^n$. Note that $[\lambda_1,u_1;\ldots;\lambda_n,u_n]$ and the shorthand $[\lambda_i,u_i]_{i=1}^n$
have the same intuitive meaning as the more familiar $\lambda_1u_1+\cdots+\lambda_nu_n$
and $\sum_{i=1}^n \lambda_iu_i$, but $\X$ is not assumed to have any addition or multiplication. Suppose that $[1,u]=u$ for every $u\in\X$ and that the following
properties are satisfied:\\
(CC.i) (Commutativity) $[\lambda_i,u_i]_{i=1}^n=[\lambda_{\sigma(i)},u_{\sigma(i)}]_{i=1}^n$
for every permutation $\sigma$ of $\{1,\ldots,n\}$;\\
(CC.ii) (Associativity) $[\lambda_i,u_i]_{i=1}^{n+2}=\big[\lambda_1,u_1;\ldots;\lambda_n,u_n;\lambda_{n+1}+\lambda_{n+2},
\big[\frac{\lambda_{n+j}}{\lambda_{n+1}+\lambda_{n+2}},u_{n+j}\big]_{j=1}^2\big];$\\
(CC.iii) (Continuity) if $u,v\in\X$ and
$\lambda^{(k)}\rightarrow\lambda\in (0;1)$ as $k\rightarrow\infty$,
then 
$[\lambda^{(k)},u;1-\lambda^{(k)},v]\rightarrow
[\lambda,u;1-\lambda,v]$;\\
(CC.iv) (Negative curvature) if $u_1,u_2,v_1,v_2\in\X$ and
$\lambda\in (0,1)$, then
$$d([\lambda,u_1;1-\lambda,u_2],[\lambda,v_1;1-\lambda,v_2])\leqslant\lambda
d(u_1,v_1)+(1-\lambda) d(u_2,v_2);$$
Based on the inductive method and (CC.ii), this axiom can be extended to convex combinations of $n$ elements, as follows: if $u_i, v_i \in \X$, $\lambda_i \in (0;1)$ with $\sum_{i=1}^n \lambda_i =1$, then $d([\lambda_i, u_i]_{i=1}^n, [\lambda_i, v_i]_{i=1}^n)\leqslant \sum_{i=1}^n \lambda_i d(u_i, v_i).$\\
(CC.v) (Convexification) for each $u\in\X$, there exists
$\lim_{n\rightarrow\infty}[n^{-1},u]_{i=1}^n$, which will be
denoted by $K_\X u$ (or $Ku$ when no confusion can
arise), and
$K$ is called the \emph{convexification operator}.\\
Then, the metric space $(\X, d)$ endowed with a convex
combination operation is referred to as the  \emph{convex combination
space} (CC space for short) and we denote $(\X, d, [.,.])$ or $\X$ shortly. We can find from axiom (CC.v) that $[n^{-1}, u]$ is different from $u$ in general, so $Ku$ and $u$ may be not identical. If $Ku=u$, then $u$ will be called \emph{convex point} of $\X$, subspace $K(\X)$ will called \textit{convexifiable domain}. If $K(\X)=\X$ then $\X$ is said to be \emph{convexifiable} and then $[.,.]$ will be called \textit{unbiased} convex combination operation. Conditions (CC.i)--(CC.v) above imply the following properties:

(2.1) (\cite{TM}, Lemma 2.1) For every $u_{11},\ldots,u_{mn}\in\X$ and
$\alpha_1,\ldots,\alpha_m,\beta_1,\ldots,\beta_n>0$ with
$\sum_{i=1}^m\alpha_i=\sum_{j=1}^n\beta_j=1$, we have
$[\alpha_i,[\beta_j,u_{ij}]_{j=1}^n]_{i=1}^m=[\alpha_i\beta_j,u_{ij}]_{i=1,j=1}^{i=m,j=n}.$

(2.2) (\cite{TM}, Lemma 2.2) The convex combination operation is jointly
continuous in its $2n$ arguments.

(2.3) (\cite{TM}, Proposition 3.1) The convexification operator $K$ is linear, that
is $K([\lambda_j,u_j]_{j=1}^n)=[\lambda_j,Ku_j]_{j=1}^n$.

(2.4) (\cite{TM}, Corollary 3.3) If $u\in\X$ and
$\lambda_1,\ldots,\lambda_n>0$ with $\sum_{j=1}^n\lambda_j=1$, then
$K([\lambda_j,u]_{j=1}^n)=Ku=[\lambda_j,Ku]_{j=1}^n$. Hence, $K$ is
an idempotent operator in $\X$.

(2.5) (\cite{TM}, Proposition 3.5) For $\lambda_1,\lambda_2,\lambda_3>0$ with
$\lambda_1+\lambda_2+\lambda_3=1$ and $u,v\in\X$,
$$[\lambda_1,u;\lambda_2,Kv;\lambda_3,Kv]=[\lambda_1u;(\lambda_2+\lambda_3),Kv].$$

(2.6) (\cite{TM}, Proposition 3.6) The mapping $K$ is non-expansive with respect to
metric $d$, i.e., $d(Ku,Kv)\leqslant d(u,v)$.

\noindent\textbf{Remark 1.} Let $\lambda_k \subset (0;1)$, $\lambda_k\to 0$ and $u, v\in \X$. By (CC.iv) and property (2.4), we have
\begin{align*}
d([\lambda_k, Ku ; 1-\lambda_k, Kv], Kv)=d([\lambda_k, Ku ; 1-\lambda_k, Kv], [\lambda_k, Kv ; 1-\lambda_k, Kv])\leqslant \lambda_kd(Ku, Kv)\to 0
\end{align*}
as $k \to \infty$. It follows $[\lambda_k, Ku ; 1-\lambda_k, Kv] \to Kv$ and this remark ensures to extend weights $\lambda_i$ from $(0;1)$ to $[0;1]$ for elements in $K(\X)$, it means that we can define $[\lambda_i, x_i]_{i\in I}=[\lambda_i, x_i]_{i\in J}$, where $x_i\in K(\X)$, $\sum_{i\in I} \lambda_i=\sum_{i\in J} \lambda_i =1$, $J=\{i\in I : \lambda_i>0\}$.

\begin{prop}
If $(\X, d)$ is a separable and complete CC space, then so is $(K(\X), d)$.
\end{prop}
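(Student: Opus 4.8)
The plan is to treat the two assertions separately, and in both cases the work reduces to standard metric-space facts once the algebraic properties of $K$ recorded above are brought in.

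For separability there is essentially nothing to do: $K(\X)$ is a subset of the separable metric space $(\X,d)$, and every subset of a separable metric space is again separable in the induced metric (a separable metric space is second countable, and second countability passes to subspaces). So I would simply invoke this.

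For completeness, since $(\X,d)$ is complete it suffices to show that $K(\X)$ is a \emph{closed} subset of $\X$, because a closed subset of a complete metric space is complete. Here the properties of $K$ enter. First I would note that $K(\X)$ coincides with the set of convex points, i.e. $K(\X)=\{u\in\X : Ku=u\}$: if $x=Ky$ then by idempotency (property (2.4)) one has $Kx=K(Ky)=Ky=x$, and conversely every fixed point of $K$ obviously lies in the image. Now take a sequence $(x_n)\subset K(\X)$ with $x_n\to x$ in $\X$; I claim $x\in K(\X)$, that is $Kx=x$. By non-expansiveness of $K$ (property (2.6)) we get $d(Kx,Kx_n)\leqslant d(x,x_n)\to 0$, hence $Kx_n\to Kx$. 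But $Kx_n=x_n$ for every $n$ since each $x_n$ is a convex point, so $x_n\to Kx$ as well. Uniqueness of limits in the metric space $\X$ then forces $Kx=x$, i.e. $x\in K(\X)$. Thus $K(\X)$ is closed, and the conclusion follows.

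I do not anticipate any real obstacle; the only slightly delicate point is the identification of $K(\X)$ with the fixed-point set of $K$, which is exactly what idempotency (2.4) supplies, used together with the non-expansiveness (2.6) that guarantees $K$ is continuous and therefore commutes with taking limits.
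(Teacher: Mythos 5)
Your argument is correct and follows essentially the same route as the paper: separability is inherited by subspaces, and completeness follows once $K(\X)$ is shown to be closed in the complete space $\X$. The only difference is that where the paper simply cites Proposition 3.7 of Ter\'an and Molchanov for the closedness, you prove it directly by identifying $K(\X)$ with the fixed-point set of $K$ via idempotency (2.4) and using non-expansiveness (2.6) to pass to the limit, which is exactly the content of the cited result.
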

\begin{proof}
The separability of $K(\X)$ is obvious. It follows from Proposition 3.7 in \cite{TM} that $K(\X)$ is a closed subset of complete metric space $\X$, hence  $K(\X)$ is complete.
\end{proof}


\section{Embedding theorem}

First, we need to recall the embedding for convex structure given by \'{S}wirszck \cite{Sw}. In his work, \'{S}wirszck introduced the notion of semiconvex set as follows: A \emph{semiconvex set} is a set $\mathbb S$ together with a family of binary operations $\{P_\lambda : \mathbb S \times\mathbb S \to \mathbb S, \lambda \in (0;1)\}$ satisfying the following axioms: For $x, y, z \in \mathbb S$ and $\lambda, \mu \in (0;1)$, (S.i) (Reflexivity) $P_\lambda(x, x)=x$; (S.ii) (Symmetry) $P_\lambda(x, y)=P_{(1-\lambda)}(y, x)$; (S.iii) (Associativity) $P_r(P_\lambda(x, y), z)=P_{r\lambda}(x, P_\mu(y,z))$ for $r=\mu/(1-\lambda+\lambda \mu)$. Sometimes for completeness, we also include the binary identity functions $P_1$ and $P_0$ defined as $P_1(x, y)=x$ and $P_0(x, y)=y$. Then (S.i) and (S.ii) hold for $\lambda \in [0;1]$, and (S.iii) holds with $\lambda(1-\mu)\neq 1$. Also in \cite{Sw}, the author also shown that a semiconvex set $\mathbb S$ may be embedded as a convex subset of a vector space if and only if it satisfies cancellation law, that is, $P_r(x, y)=P_r(x, z)$ for any $x, y, z \in \mathbb S$, $r\in (0;1)$ implies that $y=z$. Therefore, if the cancellation law in $\mathbb S$ holds, then there exist a vector space $(\mathbb V, +, .)$ and an  one-to-one correspondence $\rho: \mathbb S \to \rho(\mathbb S)=\mathbb U\subset \mathbb V$ such that $\rho(P_\lambda(x, y))=\lambda\rho(x) + (1-\lambda) \rho(y)$ for all $x, y \in \mathbb S$, $\lambda \in [0;1]$. For more details, the readers can refer to \cite{Fl, Sw}.

\begin{prop}
If $(\X, d, [.,.])$ is a CC space, then $K(\X)$ is a semiconvex set with $P_\lambda(x, y)=[\lambda,x\,;1-\lambda,y]$, $x, y\in K(\X)$, $\lambda \in [0;1]$.
\end{prop}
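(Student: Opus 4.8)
The plan is to verify the three semiconvex-set axioms (S.i)--(S.iii) for the family $P_\lambda(x,y)=[\lambda,x\,;1-\lambda,y]$ on $K(\X)$, first for $\lambda\in(0;1)$ using the CC-space axioms and then extending to $\lambda\in[0;1]$ by Remark~1. Note that $K(\X)$ is closed under the convex combination operation: by (2.3), $[\lambda,Ku\,;1-\lambda,Kv]=K([\lambda,u\,;1-\lambda,v])\in K(\X)$, so $P_\lambda$ is indeed a well-defined binary operation on $K(\X)$; this is exactly where convex points are needed.

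Next I would check each axiom. For (S.i), reflexivity $P_\lambda(x,x)=[\lambda,x\,;1-\lambda,x]=x$ for $x\in K(\X)$ follows from (2.4) (with $n=2$, $u=x$ a convex point so $Kx=x$), which gives $[\lambda_1,u;\lambda_2,u]=Ku$. For (S.ii), symmetry $P_\lambda(x,y)=P_{1-\lambda}(y,x)$ is immediate from the commutativity axiom (CC.i). For (S.iii), associativity, I would start from the left side $P_r(P_\lambda(x,y),z)=[r,[\lambda,x\,;1-\lambda,y];1-r,z]$ and apply the associativity/nesting property (2.1) (or directly (CC.ii) read in the reverse direction) to flatten it into the three-term combination $[r\lambda,x\,;r(1-\lambda),y;1-r,z]$; doing the same on the right side $P_{r\lambda}(x,P_\mu(y,z))=[r\lambda,x\,;1-r\lambda,[\mu,y\,;1-\mu,z]]$ yields $[r\lambda,x\,;(1-r\lambda)\mu,y;(1-r\lambda)(1-\mu),z]$, and one checks that with $r=\mu/(1-\lambda+\lambda\mu)$ the two weight vectors coincide, i.e. $(1-r\lambda)\mu=r(1-\lambda)$ and $(1-r\lambda)(1-\mu)=1-r$. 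Finally, the extension to the endpoints $\lambda\in\{0,1\}$ (and the degenerate parameter values in (S.iii)) is handled by Remark~1, which legitimizes assigning zero weights to elements of $K(\X)$, so that $P_1(x,y)=x$ and $P_0(x,y)=y$ are consistent with the limiting definition.

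I do not expect any serious obstacle here: the statement is essentially a dictionary translation between \'Swirszc's axioms and the CC-space axioms restricted to $K(\X)$. The only point requiring genuine care is confirming that $P_\lambda$ maps $K(\X)\times K(\X)$ back into $K(\X)$ (which uses the linearity/idempotence of $K$ from (2.3)--(2.4)) and verifying the arithmetic identity among the weights in the associativity axiom; the latter is a routine algebraic check with $r=\mu/(1-\lambda+\lambda\mu)$, and the continuity needed to pass to boundary weights is exactly the content of Remark~1 together with (CC.iii).
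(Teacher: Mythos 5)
Your proof is correct and follows essentially the same route as the paper, which simply observes that (S.i), (S.ii) and (S.iii) follow from property (2.4), axiom (CC.i) and axiom (CC.ii) respectively; you merely spell out the weight arithmetic, the closure of $K(\X)$ under the operation via (2.3), and the endpoint cases via Remark~1, all of which the paper leaves implicit. No further changes are needed.
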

\begin{proof}
It is easy to see that the axioms (S.i), (S.ii) and (S.iii)  are implied by property (2.4), (CC.i) and (CC.ii) respectively. 
\end{proof}

The following proposition establishes a metric cancellation law in $K(\X)$ and it plays the key role in obtaining the embedding theorem.
\begin{prop} \emph{(Metric cancellation law)} Let
$\X$ is a CC space and  $x, y, z \in K(\X)$, $\lambda\in [0; 1]$. Then,
\begin{align*}
d([\lambda, x ; 1-\lambda , y], [\lambda, x ; 1-\lambda , z])=(1-\lambda)d(y, z).
\end{align*}
In particular, the algebraic cancellation law holds, i.e., if $[\lambda, x\,;1-\lambda , y] = [\lambda, x \,; 1-\lambda , z]$ for some $\lambda \in [0; 1)$, then $y=z$.
\end{prop}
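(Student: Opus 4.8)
The plan is to establish the two inequalities $d(a,b)\le(1-\lambda)d(y,z)$ and $d(a,b)\ge(1-\lambda)d(y,z)$, where $a:=[\lambda,x;1-\lambda,y]$ and $b:=[\lambda,x;1-\lambda,z]$; the algebraic cancellation law then drops out as the special case $a=b$, $\lambda\in[0,1)$. The first inequality is free: since the first components of $a$ and $b$ coincide, (CC.iv) gives $d(a,b)\le\lambda d(x,x)+(1-\lambda)d(y,z)=(1-\lambda)d(y,z)$, so only the reverse inequality needs work, and we may assume $\lambda\in(0,1)$. Before attacking it I would record the exact ``segment'' distances in $K(\X)$: for $u,v\in K(\X)$ and $\mu\in[0,1]$, since $[\mu,u;1-\mu,u]=u$ by (2.4), axiom (CC.iv) gives $d(u,[\mu,u;1-\mu,v])\le(1-\mu)d(u,v)$ and $d([\mu,u;1-\mu,v],v)\le\mu d(u,v)$; as these right-hand sides sum to $d(u,v)$ while the left-hand sides sum to at least $d(u,v)$ by the triangle inequality, both are equalities. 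Rewriting, for $\mu\le\mu'$, $[\mu,p;1-\mu,q]$ as $\bigl[\mu/\mu',[\mu',p;1-\mu',q];1-\mu/\mu',q\bigr]$ by associativity and using these equalities, one obtains the uniform geodesic formula $d\bigl([\mu,p;1-\mu,q],[\mu',p;1-\mu',q]\bigr)=|\mu-\mu'|\,d(p,q)$ for $p,q\in K(\X)$.

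This already disposes of the reverse inequality when the pivot $x$ lies on the $y$--$z$ segment: if $x=[\theta,y;1-\theta,z]$, then associativity together with the collapsing rule (2.5) gives $a=\bigl[1-\lambda(1-\theta),y;\lambda(1-\theta),z\bigr]$ and $b=\bigl[\lambda\theta,y;1-\lambda\theta,z\bigr]$, so $a$ and $b$ are points of the $y$--$z$ segment, and the geodesic formula yields $d(a,b)=\bigl|(1-\lambda(1-\theta))-\lambda\theta\bigr|\,d(y,z)=(1-\lambda)d(y,z)$.

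The core of the proof is to remove the restriction on the pivot, i.e.\ to show that $F(x):=d\bigl([\lambda,x;1-\lambda,y],[\lambda,x;1-\lambda,z]\bigr)$ is independent of $x\in K(\X)$, its value on the $y$--$z$ segment being $(1-\lambda)d(y,z)$. Two partial facts come cheaply: $F(x)\le(1-\lambda)d(y,z)$ for every $x$, and $F$ is convex along segments, $F([\mu,x_1;1-\mu,x_2])\le\mu F(x_1)+(1-\mu)F(x_2)$, which follows from the identity $\bigl[\lambda,[\mu,x_1;1-\mu,x_2];1-\lambda,y\bigr]=\bigl[\mu,[\lambda,x_1;1-\lambda,y];1-\mu,[\lambda,x_2;1-\lambda,y]\bigr]$ (and its analogue with $z$), obtained from (2.1) and (2.5), together with (CC.iv). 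However, the triangle inequality and (CC.iv) by themselves only deliver the too-weak bound $F(x)\ge d(y,z)-\lambda\bigl(d(x,y)+d(x,z)\bigr)$, so genuinely closing the gap forces one to use the associativity (CC.ii)/(2.1) and the convexification axiom (CC.v) in an essential way. The route I would follow is a cancellation argument of R\aa{}dstr\"{o}m type: starting from $d(a,b)=\varepsilon$, construct recursively elements $b=b_0,b_1,b_2,\dots\in K(\X)$ which ``correct $b$ towards $a$'' by repeated mixing with $x$ and re-grouping via (2.1), so that the successive errors are controlled; then average $b_0,\dots,b_{n-1}$ exactly as in $Ku=\lim_n[n^{-1},u]_{i=1}^n$, so that the accumulated pivot contribution telescopes and vanishes as $n\to\infty$ (using completeness of $K(\X)$ to pass to the limit), leaving $d(y,z)\le\varepsilon/(1-\lambda)$. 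I expect this construction --- choosing the correcting sequence and verifying that its averaged error tends to $0$ --- to be the main obstacle; it is the abstract counterpart of the R\aa{}dstr\"{o}m cancellation lemma that underlies the classical support-function embedding of hyperspaces, and it is precisely this estimate that will make the norm in the embedding theorem of the next section well defined.
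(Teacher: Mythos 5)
Your proposal correctly settles the easy half: the upper bound $d(a,b)\le(1-\lambda)d(y,z)$ from (CC.iv), and the ``segment'' equalities $d(u,[\mu,u;1-\mu,v])=(1-\mu)d(u,v)$, $d([\mu,u;1-\mu,v],v)=\mu d(u,v)$ via the triangle-inequality squeeze --- this is exactly Step 1 of the paper's proof, and your one-dimensional geodesic formula and the special case where $x$ lies on the $y$--$z$ segment are both fine. But the reverse inequality for an arbitrary pivot $x$ is never proved. You candidly observe that the triangle inequality only yields $F(x)\ge d(y,z)-\lambda(d(x,y)+d(x,z))$, and the two other facts you assemble cannot close the gap: the upper bound is the wrong direction, and convexity of $F$ along segments bounds $F$ at interior points \emph{from above} by its endpoint values, so it can never push $F$ up to $(1-\lambda)d(y,z)$ away from the $y$--$z$ segment. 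What remains is a plan (``construct recursively elements $b_0,b_1,\dots$ which correct $b$ towards $a$ \dots I expect this construction to be the main obstacle''), not an argument; you have identified where the difficulty sits but not resolved it. Note also that your sketch invokes completeness of $K(\X)$, which the proposition does not assume.

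The missing idea in the paper is a \emph{parallelogram lemma} proved by midpoint iteration. Call $(x,y,z,t)$ a parallelogram if $m(x,z)=m(y,t)=:m_1$, where $m(p,q)=[1/2,p;1/2,q]$. One checks that $m_1$ is also the midpoint of $m(x,t)$ and $m(y,z)$, so the segment equalities give $d(m(x,t),m(y,z))=2\,d(m_1,m(y,z))\le\min\{d(t,z),d(x,y)\}$ by two applications of (CC.iv) with a common first coordinate; moreover $(x,y,m(y,z),m(x,t))$ is again a parallelogram, so iterating produces $d(m^{(k)}(x,t),m^{(k)}(y,z))\le d(t,z)$ with $m^{(k)}(x,t)=[(2^k-1)2^{-k},x;2^{-k},t]\to x$ and $m^{(k)}(y,z)\to y$, whence $d(x,y)=d(t,z)$. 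Applying this to the parallelogram $(a,b,[\lambda,y;1-\lambda,z],y)$ gives $d(a,b)=d(y,[\lambda,y;1-\lambda,z])=(1-\lambda)d(y,z)$ exactly. This uses only (CC.i)--(CC.v) and your Step-1 equalities --- no completeness and no R{\aa}dstr\"{o}m-type averaging --- and it is the step your proposal would need to supply.
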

\begin{proof} If $\lambda=0$ or $\lambda =1$, then the conclusion is trivial. We now consider $\lambda \in (0;1)$.

\textit{Step 1.} - The first auxiliary result: If $\lambda_k \subset (0;1)$ and $\lambda_k\to 0$, then $[\lambda_k, u ; 1-\lambda_k, Kv]\to Kv$ as $k \to \infty$ for $u, v\in \X$. It is easy to see due to
\begin{align*}
d([\lambda_k, u ; 1-\lambda_k, Kv], Kv)=d([\lambda_k, u ; 1-\lambda_k, Kv], [\lambda_k, Kv ; 1-\lambda_k, Kv])\leqslant \lambda_kd(u, Kv)\to 0
\end{align*}
as $k \to \infty$.\\
- The second auxiliary result: If $u, v \in K(\X)$ then $d([\lambda, u ; 1-\lambda, v], u)=(1-\lambda)d(u, v)$ and $d([\lambda, u ; 1-\lambda, v], v)=\lambda d(u, v)$. Indeed, by (CC.iv) and (2.5)
\begin{align*}
&d([\lambda, u ; 1-\lambda, v], u)=d([\lambda, u ; 1-\lambda, v], [\lambda, u ; 1-\lambda, u])\leqslant (1-\lambda)d(u, v)\\
& d([\lambda, u ; 1-\lambda, v], v)=d([\lambda, u ; 1-\lambda, v], [\lambda, v ; 1-\lambda, v])\leqslant\lambda d(u, v)
\end{align*}
and by triangular inequality,
$$d(u, v)\leqslant d([\lambda, u ; 1-\lambda, v], u) + d([\lambda, u ; 1-\lambda, v], v)\leqslant (1-\lambda)d(u, v) + \lambda d(u, v) =d(u, v).$$
Thus, $d([\lambda, u ; 1-\lambda, v], u)=(1-\lambda)d(u, v)$ and $d([\lambda, u ; 1-\lambda, v], v)=\lambda d(u, v)$.

\textit{Step 2.} We denote by $m(x, y)=[1/2, x ; 1/2, y]$ the midpoint of $x, y$ and it is easy to see that $m(x, y)$ also belongs to $K(\X)$. By (CC.iv) we have
\begin{align*}
d(m(x, y), m(x, z))=d([1/2, x ; 1/2, y], [1/2, x ; 1/2, z])\leqslant 2^{-1}d(y, z).
\end{align*}
A set of four ordered points $(x, y, z, t)$ is called \textit{parallelogram} (according to this order) if $m(x,z)=m(y,t)$. In this step, we will prove that if $(x, y, z, t)$ is a parallelogram then $d(x, y)=d(t, z)$. Without loss of generality, assume that $d(x, y)\geqslant d(t, z)$. Now it is sufficient to prove that $d(x, y)\leqslant d(t, z)$. Putting $m(x, z)=m(y, t)=m_1$, we have
\begin{align*}
&d(m_1, m(y, z))=d(m(y, t), m(y, z))\leqslant 2^{-1}d(t, z),\\
&d(m_1, m(y, z))=d(m(z, x), m(z, y))\leqslant 2^{-1}d(x, y) \tag{3.1}.
\end{align*}
Moreover, \begin{align*}
m(m(x, t), m(y, z))&=[1/2, [1/2, x ; 1/2, t]; 1/2, [1/2, y; 1/2, z]]=[1/4,x ; 1/4, y; 1/4, z ; 1/4, t]\\
&=[1/2, [1/2, x ; 1/2, z] ; 1/2, [1/2, y ; 1/2, t]]=[1/2, m_1 ; 1/2, m_1]=m_1,
\end{align*}
it means that $m_1$ is also the midpoint of $m(x, t)$ and $m(y, z)$. Thus $d(m_1, m(y, z))=2^{-1}d(m(x, t), m(y, z))$ by Step 1. Combining with (3.1) we obtain
\begin{align*}
d(m(x, t), m(y, z))\leqslant d(t, z)\;\mbox{ and }\;d(m(x, t), m(y, z))\leqslant d(x, y).\tag{3.2}
\end{align*}
On the other hand, 
\begin{align*}
m(x, m(y, z))&=[1/2, x; 1/2, [1/2, y ; 1/2, z]]=[1/2, x ; 1/4, y ; 1/4, z]=[1/4, x ; 1/4, y ; 1/2, m_1]\\
&=[1/4, x ; 1/4, y ; 1/2,[1/2, y ; 1/2, t]]=[1/4, x ; 1/2, y ; 1/4, t]=m(y, m(x, t))
\end{align*}
and it implies that $(x, y, m(y, z), m(x, t))$ is a parallelogram. Applying (3.2), we obtain
$$d\big(m^{(2)}(x, t), m^{(2)}(y, z)\big)\leqslant d(m(x, t), m(y, z))\leqslant d(t,z)\;\mbox{ and }\;d\big(m^{(2)}(x, t), m^{(2)}(y, z)\big)\leqslant d(x, y),$$
where $m^{(2)}(x, t)=m(x, m(x, t))=[3/4, x ; 1/4, t]$, $m^{(2)}(y, z)=m(y, m(y, z))=[3/4, y ; 1/4, z]$. Continuing this process, we derive
\begin{align*}
d\big(m^{(k)}(x, t), m^{(k)}(y, z)\big)\leqslant d(t,z)\;\mbox{ and }\;d\big(m^{(k)}(x, t), m^{(k)}(y, z)\big)\leqslant d(x, y)\;\mbox{ for all } k\in \N, k\geqslant 3\tag{3.3}
\end{align*}
with $m^{(k)}(x, t)=m\big(x, m^{(k-1)}(x, t)\big)=\big[(2^k-1)/2^k, x ; 1/2^k, t\big]$, $m^{(k)}(y, z)=\big[(2^k-1)/2^k, y; 1/2^k, z\big]$. Taking $k\to \infty$ in (3.3), applying Step 1 and the continuity of metric $d$, we obtain $d(x, y)\leqslant d(t, z)$. This completes Step 2.

\textit{Step 3.} The proposition will be completed in this step. Putting $u=[\lambda, x ; 1-\lambda , y]$, $v=[\lambda, x ; 1-\lambda , z]$ and $w=[\lambda, y ; 1-\lambda, z]$, we get
\begin{align*}
&m(u, w)=[1/2, [\lambda, x ; 1-\lambda , y] ; 1/2, [\lambda, y ; 1-\lambda, z]]=[\lambda/2, x ; 1/2, y ; (1-\lambda)/2, z]\\
&m(v, y)=[1/2, [\lambda, x ; 1-\lambda , z] ; 1/2, y]=[\lambda/2, x ; 1/2, y ; (1-\lambda)/2, z].
\end{align*}
Thus, $(u, v, w, y)$  is a parallelogram and it follows from Step 2 that $d(u, v)=d(y, w)$. On the other hand, $d(y, w)=d(y, [\lambda, y ; 1-\lambda, z])=(1-\lambda)d(y, z)$ by Step 1, so $d(u, v)=(1-\lambda)d(y, z)$. The proposition is proved.
\end{proof}

\begin{theo} Let $(\X, d, [.,.])$ be a complete and convexifiable CC space. Then, there exist a Banach space $(\mathbb E, \|.\|)$  and a map $j: \X \to \mathbb E$, where $j(\X)=\mathbb F$ is a subset of $\mathbb E$ such that

(i) $\mathbb F$ is closed and convex;

(ii) $j([\lambda, x\,;1-\lambda, y])=\lambda j(x)+(1-\lambda)j(y)$ for every $x, y \in \X$, $\lambda\in [0;1];$

(iii)  $d(x, y)=\|j(x)-j(y)\|$ for all $x, y \in \X$.

Furthermore, if $\X$ is separable then $\mathbb E$ is also separable.
\begin{proof}
Since $\X$ is convexifiable, it follows from Proposition 3.1, Proposition 3.2 and the result of \'{S}wirszck \cite{Sw} mentioned above that there exist a vector space $(\mathbb V, +, .)$ and an  one-to-one correspondence $\rho: \X \to \rho(\X)=\mathbb U\subset \mathbb V$ such that $\mathbb U$ is a convex subset of $\mathbb V$ and $\rho([\lambda, x ; 1-\lambda, y])=\lambda \rho (x) + (1-\lambda) \rho(y)$ for all $x, y \in \X$, $\lambda \in [0;1]$. Thanks to translation, we can assume without loss of generality that $0:=0_{\mathbb V}\in \mathbb U$ and denote $\rho^{-1}(0)=\theta \in \X$.  This ensures that if $u$ belongs to $\mathbb U$ then $\lambda u$ also belongs to $\mathbb U$ whenever $\lambda \in [0;1]$, moreover $\lambda u =\rho ([\lambda, x ; 1-\lambda, \theta])$, where $\rho(x)=u$. The metric structure on $\mathbb U$ is induced naturally from the corresponding one on $\X$, and we also use symbol $d$ to denote the metric on $\mathbb U$. Namely, if $u=\rho(x)$, $v=\rho(y)\in \mathbb U$, then $d(u, v)=d(\rho(x), \rho(y))=d(x, y)$. Thus, if $(\X, d)$ is complete (resp. separable) then $(\mathbb U, d)$ is also complete (resp. separable). From Proposition 3.2, we have
\begin{align*}
d(\lambda u, \lambda v)=d([\lambda, x ; 1-\lambda, \theta], [\lambda, y ; 1-\lambda, \theta])=\lambda d(x, y)=\lambda d(u, v),\,\mbox{ for } \lambda \in [0;1] \mbox{ and } u, v \in \mathbb U.\tag{3.4}
\end{align*}
Let us denote by $\mathbb K = \{\lambda u : u\in \mathbb U, \lambda \geqslant 0\}$ the subset of $\mathbb V$ containing $\mathbb U$. For $x, y \in \mathbb K$, they will have form $x=\alpha u$, $y=\beta v$ with $\alpha, \beta \geqslant 0$, $u, v\in \mathbb U$, then $x+y=\alpha u + \beta v=(\alpha+\beta)\big(\frac{\alpha}{\alpha+\beta}u+\frac{\beta}{\alpha+\beta}v\big)$. It implies from the convexity of $\mathbb U$ that $\frac{\alpha}{\alpha+\beta}u+\frac{\beta}{\alpha+\beta}v\in \mathbb U$. Hence, $x+y\in \mathbb K$ and $\mathbb K$ is a convex cone of $\mathbb V$. We define the mapping $d_* : \mathbb K \times \mathbb K \to [0, \infty)$ as follows:
\begin{align*}
d_*(0,0)&=d(0,0)=0;\\
d_*(x, y)&=
d_*(\alpha u, \beta v)=(\alpha+\beta).d\Big(\frac{\alpha}{\alpha+\beta}u, \frac{\beta}{\alpha+\beta}v\Big),\mbox{ for } x=\alpha u, y=\beta v, \alpha, \beta \geqslant 0, \alpha+\beta >0, u, v \in \mathbb U.
\end{align*}
The mapping $d_*$ is well-defined,  independent of the choice of $\alpha u$ and $\beta v$. To see this, let $x=\alpha' u', y=\beta' v'$, $\alpha', \beta' \geqslant 0$, $u', v' \in \mathbb U$, then $\alpha u=\alpha' u'$, $\beta'v'=\beta v$ and using (3.4) (note that in degeneration cases $\alpha+\beta=0$ or $\alpha'+\beta'=0$, the proof is trivial),
\begin{align*}
d_*(\alpha u , \beta v)&=(\alpha+\beta).d\Big(\frac{\alpha}{\alpha+\beta}u, \frac{\beta}{\alpha+\beta}v\Big)=(\alpha+\beta+\alpha'+\beta').d\Big(\frac{\alpha}{\alpha+\beta+\alpha'+\beta'}u, \frac{\beta}{\alpha+\beta+\alpha'+\beta'}v\Big)\\
&=(\alpha+\beta+\alpha'+\beta').d\Big(\frac{\alpha'}{\alpha+\beta+\alpha'+\beta'}u', \frac{\beta'}{\alpha+\beta+\alpha'+\beta'}v'\Big)\\
&=(\alpha'+\beta').d\Big(\frac{\alpha'}{\alpha'+\beta'}u', \frac{\beta'}{\alpha'+\beta'}v'\Big)=d_*(\alpha' u' , \beta' v').
\end{align*}
It is clear that if $(x, y)\in \mathbb U \times \mathbb U$ then $d_*(x, y)=d(x, y)$, and (3.4) can be extended for $(x, y, \lambda)$ from $\mathbb U\times \mathbb U\times [0;1]$ to $\mathbb K \times \mathbb K \times [0,\infty)$ by
\begin{align*}
d_*(\lambda x, \lambda y)=d_*(\lambda \alpha u, \lambda \beta v)=\lambda(\alpha+\beta).d\Big(\frac{\alpha}{\alpha+\beta}u, \frac{\beta}{\alpha+\beta}v\Big)=\lambda d_*(\alpha u, \beta v)=\lambda d_*(x, y), \tag{3.5}
\end{align*}
for  $\lambda \geqslant 0$ and  $x, y \in \mathbb K$. We now show that $d_*$ is a metric on $\mathbb K$. Indeed, the symmetry and non-negative of $d_*$ are clear. If $d_*(x, y)=0$ then $d\big(\frac{\alpha}{\alpha+\beta}u, \frac{\beta}{\alpha+\beta}v\big)=0$ and we obtain $\frac{\alpha}{\alpha+\beta}u = \frac{\beta}{\alpha+\beta}v$. It follows $\alpha u = \beta v$ and $x=y$. Now for $x=\alpha u, y=\beta v, z=\gamma w \in \mathbb K$, $u, v, w \in \mathbb U, \alpha, \beta, \gamma \geqslant 0$, applying (3.5)
\begin{align*}
d_*(x, y)&=(\alpha+\beta+\gamma).d_*\Big(\frac{\alpha}{\alpha+\beta+\gamma} u , \frac{\beta}{\alpha+\beta+\gamma} v\Big)=(\alpha+\beta+\gamma).d\Big(\frac{\alpha}{\alpha+\beta+\gamma} u , \frac{\beta}{\alpha+\beta+\gamma} v\Big)\\
&\leqslant (\alpha+\beta+\gamma).d\Big(\frac{\alpha}{\alpha+\beta+\gamma} u , \frac{\gamma}{\alpha+\beta+\gamma} w\Big) + (\alpha+\beta+\gamma).d\Big(\frac{\gamma}{\alpha+\beta+\gamma} w , \frac{\beta}{\alpha+\beta+\gamma} v\Big)\\
&=d_*(\alpha u, \gamma w)+d_*(\gamma w, \beta v)=d_*(x,z)+d_*(z, y),
\end{align*}
we obtain the triangular inequality. On the other hand,
\begin{align*}
d_*(x+z, y+z)&=d_*(\alpha u +\gamma w, \beta v + \gamma w)\\
&=2(\alpha+\beta+\gamma).d\Big(\frac{\alpha}{2(\alpha+\beta+\gamma)} u +\frac{\gamma}{2(\alpha+\beta+\gamma)} w\,, \frac{\beta}{2(\alpha+\beta+\gamma)} v +\frac{\gamma}{2(\alpha+\beta+\gamma)} w\Big)\\
&=2(\alpha+\beta+\gamma).d\Big(\frac{\alpha}{2(\alpha+\beta+\gamma)} u\,, \frac{\beta}{2(\alpha+\beta+\gamma)} v\Big)=d_*(\alpha u, \beta v)=d_*(x, y),
\end{align*}
it means that the metric $d_*$ satisfies the cancellation law in $\mathbb K$. Recall that in degeneration cases, the proofs of triangular inequality and cancellation law are easy and we omit them. Applying  R{\aa}dstr\"{o}m's embedding theorem (\cite{Ra}, Theorem 1), there exist a real normed linear space $(\mathbb B, \|.\|)$ and a map $\widetilde{j}: \mathbb K \to \widetilde{j}(\mathbb K)=\mathbb W\subset \mathbb B$ such that: (a) $\widetilde{j}(\lambda x + \mu y)=\lambda \widetilde{j}(x)+\mu \widetilde{j}(y)$ for $x, y  \in \mathbb K$ and $\lambda, \mu \geqslant 0$; (b) $d_*(x, y)=\|\widetilde{j}(x)-\widetilde{j}(y)\|$ for all $x, y\in \mathbb K$; (c) $\mathbb W$ is a convex cone of $\mathbb B$.  Moreover, we can choose the normed linear space such that it is complete, i.e., $\mathbb B$ is a Banach space (if necessary, we denote by $\overline{\mathbb B}$ the completion of $\mathbb B$ and embed $\mathbb K$ to $\overline{\mathbb B}$). It is not hard to check that $\widetilde{j}(\mathbb U)$ is a convex subset contained in $\mathbb B$, complete under the metric induced by the
norm of $\mathbb B$. Putting $j=\widetilde{j}_\circ \rho: \X \to \mathbb B$ and $\mathbb F = j(\X)$, we find that $\mathbb F$ is a closed, convex subset of $\mathbb B$, moreover $j(\theta)=0$. Define $\mathbb E$ to be the closed linear subspace of $\mathbb B$ generated by $\mathbb F$. It is easy to check that the subspace $\mathbb E$ is a Banach space and the conclusions (i), (ii), (iii) of theorem hold. The remaining conclusion when $\X$ is separable, then $\mathbb F$ is too and this implies the separability of $\mathbb E$, so 
this observation completes the proof.
\end{proof}
\end{theo}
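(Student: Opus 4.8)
The plan is to linearize $\X$ in two stages, by composing the classical embedding theorem of \'{S}wirszck for (semi)convex structure with that of R{\aa}dstr\"{o}m for translation-invariant metric cones. Since $\X$ is convexifiable, $\X=K(\X)$, so by Proposition~3.1 the pair $(\X,[.,.])$ is a semiconvex set with $P_\lambda(x,y)=[\lambda,x\,;1-\lambda,y]$, and by the metric cancellation law (Proposition~3.2) it obeys the algebraic cancellation law. \'{S}wirszck's theorem (\cite{Sw}) then gives a vector space $\mathbb V$ and a bijection $\rho:\X\to\mathbb U\subset\mathbb V$ onto a convex set with $\rho([\lambda,x\,;1-\lambda,y])=\lambda\rho(x)+(1-\lambda)\rho(y)$ for $\lambda\in[0;1]$. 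Translating, I may assume $0\in\mathbb U$; writing $\theta:=\rho^{-1}(0)$ one gets $\lambda u=\rho([\lambda,x\,;1-\lambda,\theta])\in\mathbb U$ whenever $\rho(x)=u$ and $\lambda\in[0;1]$. Transport the metric to $\mathbb U$ by $d(u,v):=d(\rho^{-1}u,\rho^{-1}v)$; then $\mathbb U$ is complete, and separable when $\X$ is.

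The core step is to manufacture from $(\mathbb U,d)$ a metric cone meeting the hypotheses of R{\aa}dstr\"{o}m's theorem. First I would record the identity $d(\lambda u,\lambda v)=\lambda\,d(u,v)$ for $u,v\in\mathbb U$, $\lambda\in[0;1]$, which is just the metric cancellation law applied with the cancelled point $\theta$. Then set $\mathbb K:=\{\lambda u:u\in\mathbb U,\ \lambda\geqslant 0\}$; the identity $\alpha u+\beta v=(\alpha+\beta)\big(\tfrac{\alpha}{\alpha+\beta}u+\tfrac{\beta}{\alpha+\beta}v\big)$ and convexity of $\mathbb U$ make $\mathbb K$ a convex cone in $\mathbb V$. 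On $\mathbb K$ define $d_*(\alpha u,\beta v):=(\alpha+\beta)\,d\big(\tfrac{\alpha}{\alpha+\beta}u,\tfrac{\beta}{\alpha+\beta}v\big)$, with $d_*(0,0):=0$. I would then verify, in turn: that $d_*$ does not depend on the representatives $\alpha u,\beta v$ (rescale both pairs to a common total weight and use the homogeneity identity); that $d_*$ extends $d$ on $\mathbb U$ and satisfies $d_*(\lambda x,\lambda y)=\lambda d_*(x,y)$ for $\lambda\geqslant 0$; that $d_*$ is a metric (symmetry, nonnegativity and separation are immediate, while for the triangle inequality one rescales $\alpha u,\beta v,\gamma w$ to the common weight $\alpha+\beta+\gamma$ so that all normalized points lie in $\mathbb U$, then applies the triangle inequality of $d$); and that $d_*$ is translation invariant, $d_*(x+z,y+z)=d_*(x,y)$, again by the common-weight rescaling together with the cancellation property of $d$ on $\mathbb U$. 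Degenerate cases with a vanishing weight are handled separately and trivially.

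At this point $(\mathbb K,+,d_*)$, with nonnegative scalar multiplication, is a commutative cancellative metric cone whose metric is translation invariant and positively homogeneous, so R{\aa}dstr\"{o}m's embedding theorem (\cite{Ra}, Theorem~1) yields a normed space $\mathbb B$ and a map $\widetilde j:\mathbb K\to\mathbb B$ that is additive, positively homogeneous, isometric for $d_*$, and carries $\mathbb K$ onto a convex cone; replacing $\mathbb B$ by its completion, I may take $\mathbb B$ a Banach space, and then $\widetilde j(\mathbb U)$ is complete. Set $j:=\widetilde j\circ\rho:\X\to\mathbb B$ and $\mathbb F:=j(\X)=\widetilde j(\mathbb U)$. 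Affinity of $\rho$ and of $\widetilde j$ on $\mathbb U$ gives (ii); $\|j(x)-j(y)\|=d_*(\rho x,\rho y)=d(x,y)$ gives (iii); (ii) together with convexity of $\mathbb U$ gives convexity of $\mathbb F$, and completeness of $(\mathbb F,\|.\|)$ gives its closedness, so (i) holds. Finally let $\mathbb E$ be the closed linear span of $\mathbb F$ in $\mathbb B$: it is a Banach space containing $\mathbb F$, and if $\X$ is separable then $\mathbb F$, hence $\mathbb E$, is separable.

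I expect the main obstacle to be the middle paragraph — defining $d_*$ correctly and, above all, verifying its translation invariance and well-definedness, since these are exactly what R{\aa}dstr\"{o}m needs, and both reductions lean entirely on the homogeneity identity $d(\lambda u,\lambda v)=\lambda\,d(u,v)$, i.e. on the metric cancellation law. The device that makes every one of these verifications go through is the repeated rescaling of all points involved to a common total weight, so that the normalized points fall back into $\mathbb U$ where $d$ itself is available; getting that bookkeeping exactly right, degenerate cases included, is the bulk of the work.
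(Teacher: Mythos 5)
Your proposal is correct and follows essentially the same route as the paper: Świrszck's embedding of the semiconvex set $\X=K(\X)$ (justified by the metric cancellation law of Proposition 3.2), transport of the metric to the convex image $\mathbb U$, construction of the cone $\mathbb K$ with the homogeneous, translation-invariant metric $d_*$ via common-weight rescaling, and then Rådström's theorem followed by completion and passage to the closed linear span. The verifications you flag as the main work (well-definedness and translation invariance of $d_*$, with degenerate weights treated separately) are exactly the ones carried out in the paper, using the same cancellation identity $d(\lambda u,\lambda v)=\lambda d(u,v)$.
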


In 2011, Brown \cite{Br} introduced the notion of convex-like structure in metric space and it was suitably restated in \cite{CF} as follows. Let $(\X, d)$ be a complete metric space. Take $\X^{(n)}=\X\times\cdots\times\X$ to be the $n$-fold Cartesian product and Prob$_n$ the set of probability measures on the $n$-element set $\{1, 2, \ldots, n\}$ endowed with the $\ell_1$-metric $\|\mu-\nu\|=\sum_{i=1}^n|\mu(i)-\nu(i)|$. We say that $(\X, d)$ has a \textit{convex-like structure} if for every $n\in \N$ and $\mu\in$ Prob$_n$ there is given a continuous map $\gamma_\mu: \X^{(n)}\to \X$ such that

($\gamma.1$)\; $\gamma_\mu(x_1,\ldots,x_n)=\gamma_{\mu \circ \sigma}(x_{\sigma(1)},\ldots,x_{\sigma(n)})$ for every permutation $\sigma$ of $\{1,\ldots,n\}$;

($\gamma.2$)\; if $x_1=x_2$, then $\gamma_\mu(x_1,x_2,\ldots,x_n)=\gamma_\nu(x_1, x_3,\ldots, x_n)$, where $\nu\in$ Prob$_{n-1}$ is given by $\nu(1)=\mu(1)+\mu(2)$ and $\nu(j)=\mu(j+1)$, $2\leqslant j \leqslant n-1$;

($\gamma.3$)\; if $\mu(i)=1$, then $\gamma_\mu(x_1,\ldots,x_n)=x_i$;

($\gamma.4$)\; $d(\gamma_\mu(x_1,\ldots,x_n), \gamma_\mu(y_1,\ldots,y_n))\leqslant \sum_{i=1}^n\mu(i)d(x_i, y_i)$ for all $y_1, \ldots, y_n \in \X$;

($\gamma.5$)\; for all $\mu_1\in$ Prob$_n$, $\mu_2\in$ Prob$_m$, $\nu\in$ Prob$_2$, then $\gamma_\nu(\gamma_{\mu_1}(x_1,\ldots, x_n), \gamma_{\mu_2}(y_1,\ldots,y_m))=\gamma_\eta(x_1,\ldots,x_n,y_1,\ldots,y_m)$, where $\eta \in$ Prob$_{n+m}$ is given by $\eta(i)=\nu(1)\mu_1(i), 1\leqslant i\leqslant n$ and $\eta(j+n)=\nu(2)\mu_2(j), 1\leqslant j\leqslant m$.

\begin{prop}
	Let $(\X, d)$ be a complete metric space. Then, $\X$ is a convexifiable CC space if and only if $\X$ has a convex-like structure. In other words, a convexifiable CC space and metric space with a convex-like structure are identical.
	\begin{proof}
		On $\X$, when a convex-like structure and a convex combination operation determine each other by the identity
		$$\gamma_\mu(x_1,\ldots,x_n)=[\mu(1), x_1 ; \ldots; \mu(n), x_n]\;\mbox{ for } \mu \in \mbox{Prob}_n,$$
		then the axioms ($\gamma.1$) and ($\gamma.4$) are equivalent to the axioms (CC.i) and (CC.iv) respectively. 
		
		- Suppose that $\X$ is a convexifiable CC space. Then the axioms ($\gamma.2$), ($\gamma.3$), ($\gamma.5$) follow from (2.5), Remark 1, (2.1) respectively. Hence $\X$ has convex-like structure.
		
		- Suppose that $\X$ has a convex-like structure. Then, the axiom (CC.ii) follows from ($\gamma.5$); axiom (CC.v) is satisfied thanks to ($\gamma.2$) and in this case, the operation $[.,.]$ is unbiased. In order that $\X$ becomes a convexifiable CC space, it remains to check the axiom (CC.iii). Namely, for $u,v\in\X$ and
		$\lambda_k \rightarrow\lambda\in (0;1)$, we need to prove that $\gamma_{\lambda_k, 1-\lambda_k}(u,v) \to \gamma_{\lambda, 1-\lambda}(u,v)$ as $k \to \infty$, where $\gamma_{\lambda, 1-\lambda}$ is a convenient notation of $\gamma_\mu$ for $\mu\in$ Prob$_2$, $\mu(1)=\lambda, \mu(2)=1-\lambda$. For $0<\alpha\leqslant \beta<1$,
		\begin{align*}
		d(\gamma_{\alpha, 1-\alpha}(u,v), \gamma_{\beta, 1-\beta}(u,v))&=d(\gamma_\eta(u,v,v), \gamma_\eta(u,u,v))\;\;(\mbox{by }(\gamma.2) \mbox{ with } \eta(1)=\alpha, \eta(2)=\beta-\alpha, \eta(3)=1-\beta)\\
		&\leqslant (\beta-\alpha)d(u,v)\;\;(\mbox{by } (\gamma.4)).
		\end{align*}
		Changing the role of $\alpha, \beta$, we obtain $d(\gamma_{\alpha, 1-\alpha}(u,v), \gamma_{\beta, 1-\beta}(u,v))\leqslant |\beta-\alpha|d(u,v)$ for $\alpha, \beta \in (0;1)$. Applying this inequality, we have (CC.iii).
	\end{proof}
\end{prop}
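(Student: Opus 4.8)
The plan is to set up the obvious dictionary between the two structures via the identity $\gamma_\mu(x_1,\ldots,x_n)=[\mu(1),x_1;\ldots;\mu(n),x_n]$ for $\mu\in\mathrm{Prob}_n$, where on the CC-space side I first invoke Remark 1 (legitimate because $K(\X)=\X$ by convexifiability) so that zero weights are allowed and the right-hand side makes sense for every $\mu\in\mathrm{Prob}_n$, not only for strictly positive weights. Under this dictionary $(\gamma.1)$ is verbatim (CC.i), while $(\gamma.4)$ with $n=2$ is verbatim (CC.iv) and, conversely, (CC.iv) upgrades to the $n$-ary estimate $(\gamma.4)$ via the extension recorded right after (CC.iv). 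So the proof reduces to matching the remaining axioms on each side, and I would organize it as two implications.

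For ``convexifiable CC space $\Rightarrow$ convex-like structure'': continuity of each $\gamma_\mu$ is property (2.2); axiom $(\gamma.3)$ (one weight equal to $1$) follows from $[1,u]=u$ together with Remark 1 for collapsing the vanishing weights; axiom $(\gamma.2)$ (two equal entries merge) is property (2.5), applied to the pair of coincident convex points and promoted to $n$ arguments by associativity (CC.ii); and axiom $(\gamma.5)$ (compatibility of nested combinations) is exactly property (2.1). I would write the reduction of $(\gamma.2)$ to (2.5) out with a little care, since that is the only spot where associativity is really used, but nothing here is deep.

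For ``convex-like structure $\Rightarrow$ convexifiable CC space'': axiom (CC.ii) is a reassociation that is a special case of $(\gamma.5)$; axiom (CC.v) is immediate and moreover forces unbiasedness, because iterating $(\gamma.2)$ and then using $(\gamma.3)$ gives $[n^{-1},u]_{i=1}^n=u$ for every $n$, so the limit in (CC.v) exists and equals $u$, i.e.\ $K=\mathrm{id}$. The one axiom with no immediate counterpart is the continuity axiom (CC.iii): the convex-like structure only postulates continuity of $\gamma_\mu$ for each \emph{fixed} $\mu$, not joint continuity in the weight and the points, and I expect this to be the main obstacle. The remedy is to establish Lipschitz dependence on the weight: for $0<\alpha\le\beta<1$ use $(\gamma.2)$ to rewrite $\gamma_{\alpha,1-\alpha}(u,v)=\gamma_\eta(u,v,v)$ and $\gamma_{\beta,1-\beta}(u,v)=\gamma_\eta(u,u,v)$ with $\eta=(\alpha,\beta-\alpha,1-\beta)\in\mathrm{Prob}_3$, then bound by $(\gamma.4)$ to get $d(\gamma_{\alpha,1-\alpha}(u,v),\gamma_{\beta,1-\beta}(u,v))\le(\beta-\alpha)\,d(u,v)$; swapping the roles of $\alpha$ and $\beta$ upgrades this to $\le|\beta-\alpha|\,d(u,v)$ for all $\alpha,\beta\in(0;1)$. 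Feeding a sequence $\lambda_k\to\lambda\in(0;1)$ into this estimate and combining with continuity of $\gamma_{\lambda,1-\lambda}$ in $(u,v)$ through a routine $\varepsilon/2$ split yields $\gamma_{\lambda_k,1-\lambda_k}(u,v)\to\gamma_{\lambda,1-\lambda}(u,v)$, which is precisely (CC.iii). With that the verification is complete.
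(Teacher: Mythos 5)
Your proposal is correct and follows essentially the same route as the paper: the same dictionary $\gamma_\mu(x_1,\ldots,x_n)=[\mu(1),x_1;\ldots;\mu(n),x_n]$, the same matching of $(\gamma.2)$, $(\gamma.3)$, $(\gamma.5)$ with (2.5), Remark 1, and (2.1) in one direction, and in the other direction the identical Lipschitz-in-the-weight estimate $d(\gamma_{\alpha,1-\alpha}(u,v),\gamma_{\beta,1-\beta}(u,v))\leqslant|\beta-\alpha|\,d(u,v)$ obtained from $(\gamma.2)$ with $\eta=(\alpha,\beta-\alpha,1-\beta)$ and $(\gamma.4)$ to recover (CC.iii). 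The only differences are cosmetic (your extra $\varepsilon/2$ split is unnecessary since the points in (CC.iii) are fixed), so no further comment is needed.
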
 

\noindent \textbf{Remark 2.} After all proofs in this paper completed, we have just been known the notion of convex-like structure by the supplying of Tobias Fritz and have been aware that a similar result to Theorem 3.3 was established before by Capraro and Fritz in \cite{CF}.  In their work, they proved that a convex-like structure is affinely and isometrically isomorphic to a closed convex subset of a Banach space (\cite{CF}, Theorem 9). Combining this result with Proposition 3.4 above, a convexifiable CC space also can be embedded into Banach space. However, the scheme for embedding in our proof is slightly different from theirs, our final goal for embedding is to apply R{\aa}dstr\"{o}m's result. To be more specific, in \cite{CF}, Theorem 9: Convex-like structure on $\X$ $\to$ establish algebraic cancellation law $\to$ embed $\X$ into vector space (by Stone's embedding) $\to$ prove the translation-invariant of metric $\to$ extend metric to affine hull and to whole vector space which becomes Banach space; while in Theorem 3.3: Convexifiable CC space $\X$ $\to$ establish metric cancellation law and as its corollary, obtain algebraic cancellation law $\to$ embed $\X$ into vector space (by \'{S}wirszck's embedding) $\to$ construct convex cone containing $\X$ and metric in this cone $\to$ embed into Banach space (by R{\aa}dstr\"{o}m's embedding). Therefore, we still present Theorem 3.3 as an independent rediscovery of Theorem 9 in \cite{CF}.

\section{Applications}
Throughout Section 4 and Section 5, $(\Omega,\F,P)$ is a
complete probability space without atoms, for $A\in\F$, the notation $I(A)$ (or $I_A$) is the indicator function of $A$.


Suppose that $(\X,d)$ is a metric space and $\G$ is a sub-$\sigma$-algebra of $\F$. A  mapping $X:\Omega\rightarrow\X$ is said to be $\G$\textit{-measurable}
if $X^{-1}(B)\in\G$ for all
$B\in\mathcal{B}(\X)$, where $\mathcal{B}(\X)$
is the Borel $\sigma$-algebra on $\X$. An $\F$-measurable mapping will be called \textit{random element} and when a
 random element $X$ takes finite values in $\X$,
it is called a \emph{simple random element}. A random element $X:\Omega\rightarrow\X$
is said to be $p$-\emph{order integrable} ($p>0$) if $d^p(u, X)$ is an integrable
real-valued random variable for some $u\in \X$ and when $p=1$, $X$ is said to be \textit{integrable} briefly. Note that this definition does not
depend on the selection of element $u$. The space (of equivalence classes) of all  $\G$-measurable, $p$-order integrable random elements in $\X$ will be denoted by
$L_\X^p(\G)$. We also use $L_\X^p$ to denote $L_\X^p(\F)$ and the metric on
$L_\X^p(\G)$ is defined by $\Delta_p(X,Y)=(Ed^p(X,Y))^{1/p}$, $p\geqslant 1$.

The \emph{distribution} $P_X$ of an $\X$-valued random element $X$ is defined by $P_X(B)=P(X^{-1}(B)),\forall B\in\mathcal{B}(\X),$ and two $\X$-valued random elements $X,Y$ are said to be \emph{identically distributed} if $P_X=P_Y$. The collection of $\X$-valued random elements $\{X_i, i\in I\}$ is said to be \emph{independent} (resp. \textit{pairwise independent}) if the collection of $\sigma$-algebras $\{\sigma(X_i), i\in I\}$ is independent (resp. pairwise independent), where $\sigma (X)=\{X^{-1}(B), B\in\mathcal{B}(\X)\}$.

Next, we recall some notions introduced by Ter\'an and Molchanov \cite{TM}. Assume that $(\X, d)$ is a separable and complete CC space. For a simple random element $X=[I_{\Omega_i}, x_i]_{i=1}^n$, the \emph{expectation} of
$X$ is defined by $EX=[P(\Omega_i),Kx_i]_{i=1}^n.$
It is easy to prove that if $X, Y$ are simple random elements, then $d(EX, EY)\leqslant Ed(X, Y).$

We fix $u_0\in K(\X)$ (by (CC.v),
$K(\X)\neq\emptyset)$ and $u_0$ will be considered as the
special element of $\X$. Since the metric space $\X$
is separable, there exists a countable dense subset $\{u_j, j\geqslant1\}$
of $\X$. For each $n\geqslant 1$, we define the mapping
$\psi_n:\X\rightarrow\X$ such that
$\psi_n(x)=u_{m_n(x)}$, where
$m_n(x)$ is the smallest $i\in\{0,\ldots,n\}$ such that $d(u_i,x)=\min_{0\leqslant j\leqslant
n}d(u_j,x)$. Then, $d(u_0, \psi_n(x)) \leqslant 2d(u_0, x)$ for all $n$ and all $x\in \X$.

 Since $\X$ is separable and complete, an integrable
$\X$-valued random element can be approximated by a
sequence of simple random elements. Namely, for $X\in L_\X^1$ then $X=\lim_{n\to \infty}\psi_n(X)$ and the \emph{expectation} of $X$
is defined by $EX=\lim_{n\rightarrow\infty}E\psi_n(X).$ By the approximation method, we also prove that if $X,Y\in L_\X^1$, then $d(EX, EY)\leqslant
Ed(X,Y)$.

A set $A \subset \X$ is called \emph{convex} if $[\lambda_i, u_i]_{i=1}^n \in A$ for all $u_i \in A$ and positive numbers $\lambda_i$ that sum to 1. For $A\subset \X$, we denote as $coA$ the \emph{convex hull} of $A$, which is the smallest convex subset containing $A$, and $\overline{co}A$ is the closure of $coA$ in $\X$. Let $k(\X)$ (resp. $ck(\X)$) be the set of nonempty compact (resp. convex compact) subsets of $\X$ and denote by $D_\X$ the Hausdorff metric on $k(\X)$, that is $D_\X (A, B)=\max\{\sup_{a\in A}\inf_{b\in B} d(a, b), \sup_{b\in B}\inf_{a\in A}d(b, a)\}$ for $A, B \in k(\X)$. It follows from  Theorem 6.2 \cite{TM} that if $\X$ is a separable complete CC space, then the space $k(\X)$ with the convex combination
$$[\lambda_i, A_i]_{i=1}^n=\{[\lambda_i, u_i]_{i=1}^n : u_i \in A_i, \;\text{for all}\; i\}$$
and Hausdorff metric $D_\X$ is a separable complete CC space, where the convexification operator $K_{k(\X)}$ is given by
$$K_{k(\X)}A=\overline{co}K_{\X}(A)=\overline{co}\{K_{\X}u : u\in A\}.$$
This is a nice feature of CC space. Based on this property, if a result holds for elements  in CC space then it can be uplifted to the space of nonempty compact subsets. In addition, $K_{k(\X)}(k(\X))=ck(K_\X(\X))$ by Proposition 5.1 in next section. Further details can be found in \cite{TM}.

From now until the end of paper,  we always assume that $(\X, d)$ is a separable and complete CC space. Proposition 2.1 implies that $(K(\X), d)$  is also separable, complete and convexifiable CC space. Therefore, it follows from Theorem 3.3 that $K(\X)$ can be embedded isometrically as a closed, convex subset of separable Banach space $\mathbb E$ via mapping $j$. Moreover, if $X$ is an integrable $\X$-valued random element, then $KX$ is an integrable $K(\X)$-valued random element.
\subsection{On some properties of expectation}
\begin{theo}
Let $X$ be an integrable $\X$-valued random element. Then, $j(E X)=j(E(KX))= E j(KX)$ where $j: K(\X) \to \mathbb E$ is the mapping mentioned in Theorem 3.3 and $Ej(KX)$ is the Bochner integral of $j(KX)$. In particular, if $X$ is an integrable $K(\X)$-valued random element, then $j(EX)=Ej(X)$.
\end{theo}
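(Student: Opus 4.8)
The plan is to verify the identity first for simple random elements --- where every object in sight is a finite convex combination that $j$ turns into a genuine linear combination --- and then to pass to the limit along the canonical approximating sequence $\big(\psi_n(X)\big)_n$, using that $E$ is $1$-Lipschitz on $L_\X^1$ and that $j$ is an isometry. Before that I would assemble the standing facts: since $K$ is non-expansive (property (2.6)) it is Borel measurable, so $KX$ is a $K(\X)$-valued random element, integrable by the observation recorded just before the theorem; hence $j(KX)$ is strongly measurable --- a continuous image of a measurable map into the separable Banach space $\mathbb E$ --- with $E\|j(KX)\|\le Ed(KX,u_0)+\|j(u_0)\|<\infty$, so the Bochner integral $Ej(KX)$ is well defined. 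Also $EX\in K(\X)$, being the limit of the elements $E\psi_n(X)\in K(\X)$ and $K(\X)$ being closed, so $j(EX)$ makes sense. Finally, property (ii) of Theorem 3.3, stated there for binary combinations, extends by induction on $n$ through the associativity axiom (CC.ii) --- and the convexity of $K(\X)$, which follows from (2.3) --- to $j\big([\lambda_i,y_i]_{i=1}^n\big)=\sum_{i=1}^n\lambda_i\,j(y_i)$ for all $y_i\in K(\X)$ and weights $\lambda_i\ge 0$ summing to $1$.

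Next I would treat the simple case. If $Z=[I_{\Omega_i},v_i]_{i=1}^n$ is a simple $\X$-valued random element, then by the definition of the expectation of a simple element together with the idempotency of $K$ (property (2.4)) one has $EZ=[P(\Omega_i),Kv_i]_{i=1}^n\in K(\X)$, and hence $j(EZ)=\sum_{i=1}^n P(\Omega_i)\,j(Kv_i)$ by the extended form of (ii). On the other hand $KZ$ is the simple $K(\X)$-valued element taking the value $Kv_i$ on $\Omega_i$, so $j(KZ)=\sum_{i=1}^n I_{\Omega_i}\,j(Kv_i)$ is an $\mathbb E$-valued simple function whose Bochner integral is also $\sum_{i=1}^n P(\Omega_i)\,j(Kv_i)$. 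Therefore $j(EZ)=Ej(KZ)$ for every simple $Z$.

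Now I would take $X\in L_\X^1$, apply the last identity to $Z=\psi_n(X)$, and let $n\to\infty$. On the left, $\psi_n(X)\to X$ in $L_\X^1$ gives $E\psi_n(X)\to EX$ by the definition of $EX$, and $j$ being an isometry (hence continuous) yields $j(E\psi_n(X))\to j(EX)$. On the right, $Ed(K\psi_n(X),KX)\le Ed(\psi_n(X),X)\to 0$ because $K$ is non-expansive, and, $j$ being an isometry, this says $E\|j(K\psi_n(X))-j(KX)\|\to 0$, that is $j(K\psi_n(X))\to j(KX)$ in $L^1_{\mathbb E}$; since the Bochner integral is $1$-Lipschitz on $L^1_{\mathbb E}$ we get $Ej(K\psi_n(X))\to Ej(KX)$. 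This yields $j(EX)=Ej(KX)$. Applying the same identity to the integrable $\X$-valued element $KX$ and using $K(KX)=KX$ gives $j(E(KX))=Ej(KX)$ as well, so $j(EX)=j(E(KX))=Ej(KX)$; and if $X$ is already $K(\X)$-valued then $KX=X$ pointwise, which is the particular case $j(EX)=Ej(X)$.

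The substance is entirely in this last limiting step, and the point I expect to need the most care is the interchange of $\lim_n$ with the Bochner integral: this is exactly where property (iii) of Theorem 3.3 is essential, since it is the isometry that converts $L^1$-convergence of the $\X$-valued approximants into $L^1_{\mathbb E}$-convergence of their images, and convergence in $L^1_{\mathbb E}$ is precisely what allows the limit to pass through $E$. A secondary point I would state explicitly is that each of $EX$, $E(KX)$ and $Ej(KX)$ is independent of the chosen approximating sequence of simple elements --- a consequence of $d(EZ,EZ')\le Ed(Z,Z')$ and of the analogous estimate for Bochner integrals --- which is what makes it legitimate to evaluate all three along the single sequence $\big(\psi_n(X)\big)_n$.
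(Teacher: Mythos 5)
Your proof is correct and follows essentially the same route as the paper's: verify the identity for simple random elements, where $j$ turns the convex combination $[P(\Omega_i),Kx_i]_{i=1}^n$ into the linear combination $\sum_i P(\Omega_i)j(Kx_i)$, and then pass to the limit along the approximating sequence $\psi_n(X)$. The only differences are minor: the paper quotes Ter\'an's Lemma~3.3 for $EX=E(KX)$ where you instead obtain $j(E(KX))=Ej(KX)$ by applying your identity to $KX$ and using idempotency of $K$, and the paper moves the limit through the Bochner integral by dominated convergence with the bound $\|j(K\psi_n(X))\|\leqslant 2d(X,u_0)+d(u_0,\theta)$, while you use the $L^1_{\mathbb E}$-convergence supplied directly by the isometry --- both are valid.
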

\begin{proof} First, observe that $j(KX)$ is a Borel-measurable random element in separable Banach space $\mathbb E$ and $E\|j(KX)\|=Ed(\theta, KX)\leqslant Ed(\theta, X)<\infty$, where the element $\theta$ was mentioned in proof of Theorem 3.3. This remark ensures for the existence of Bochner integral of $j(KX)$.
Next, Lemma 3.3 in \cite{Te} implies that $EX=E(KX)$, hence it is sufficient to prove  $j(E(KX))=Ej(KX)$. It will be done via using the technique of approximation by simple random elements. If $X$ is simple, i.e., $X=[I_{\Omega_i}, x_i]_{i=1}^n$, then
\begin{align*}
j(E(KX))=j([P(\Omega_i), Kx_i]_{i=1}^n)=\sum_{i=1}^n P(\Omega_i) j(Kx_i)=Ej(KX).
\end{align*}
In general case $X\in L_\X^1$, there exists a sequence of simple random elements $\{X_n=\psi_n(X)\}_{n\geqslant 1}$ such that $Ed(X_n, X)\to 0$ and $EX_n\to EX$ as $n\to \infty$. Since the convexification operator $K$ is non-expansive with respect to
metric $d$, we have $d(E(KX_n),E(KX))\leqslant Ed(KX_n, KX)\leqslant Ed(X_n, X)\to 0$. On the other hand, the continuity of mappings $j$ and $K$ follows that $j(KX_n)\to j(KX)$, moreover $$\|j(KX_n)\|=d(KX_n, \theta)\leqslant d(X_n, \theta)\leqslant d(X_n, u_0)+d(u_0, \theta)\leqslant 2d(X, u_0)+d(u_0, \theta)\in L_\R^1.$$ 
Applying the Lebesgue dominated convergence theorem in $\R$ and combining with the case above, we obtain
\begin{align*}
j(E(KX))=j\big(\lim_{n\to \infty} E(KX_n)\big)=\lim_{n\to \infty} j(E(KX_n))=\lim_{n\to \infty} Ej(KX_n)=Ej(KX).
\end{align*}
The proof is completed.
\end{proof}
By Theorem 4.1, we immediately derive the following corollary.
\begin{coro}
1) For $X_i\in L^1_{\X}$, we have $E([\lambda_1, X_1 ; \lambda_2, X_2])=[\lambda_1, EX_1 ; \lambda_2, EX_2]$.\\
2) Suppose that $X\in  L^1_{\X}$ and $\xi$ be a real-valued random variable such that $0< \xi < 1 $ a.s. If $\xi$ and $X$ are independent, then $E([\xi, X; 1-\xi, u])=[E\xi, EX; 1-E\xi, Ku]$, $u\in \X$.\\
3) Let $\xi$ be a real-valued random variable such that $0< \xi < 1 $ a.s. Then
 $E([\xi, u; 1-\xi, v])=[E\xi, Ku; 1-E\xi, Kv]$
for all $u, v \in \X$.
\end{coro}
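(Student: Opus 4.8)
The plan is to transport each identity through the isometric embedding $j$ of Theorem 3.3 and then read it off from the linearity of the Bochner integral in the Banach space $\mathbb E$; for part 2) one extra ingredient is needed, namely that an independent bounded scalar factors out of a Bochner integral. Three facts carry the argument: (a) $j$ is injective and, for $x,y\in K(\X)$ and $\lambda\in[0;1]$, $j([\lambda,x;1-\lambda,y])=\lambda j(x)+(1-\lambda)j(y)$ (Theorem 3.3(ii)); (b) $K$ is linear, $K([\lambda_i,Y_i]_{i=1}^n)=[\lambda_i,KY_i]_{i=1}^n$ (property (2.3)); (c) $j(EY)=Ej(KY)$ for every integrable $\X$-valued $Y$ (Theorem 4.1). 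So for any integrable convex combination $Y$ we apply $j$, rewrite $j(EY)=Ej(KY)$, use (b) to move $K$ inside the combination, use (a) to turn it into an affine combination of the $j(KY_i)$ in $\mathbb E$, push the expectation through by linearity, apply (c) termwise, and finally undo $j$ by injectivity.

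For part 1), put $Y=[\lambda_1,X_1;\lambda_2,X_2]$. It is a random element by joint continuity of the convex combination (property (2.2)), and it is integrable since, with $b=[\lambda_1,a;\lambda_2,a]$ a fixed point of $\X$, (CC.iv) gives $d(Y,b)\leqslant\lambda_1 d(X_1,a)+\lambda_2 d(X_2,a)\in L^1_\R$. Using $KY=[\lambda_1,KX_1;\lambda_2,KX_2]$,
\begin{align*}
j(EY) &= Ej(KY) = E\big(\lambda_1 j(KX_1)+\lambda_2 j(KX_2)\big) \\
&= \lambda_1 Ej(KX_1)+\lambda_2 Ej(KX_2) = \lambda_1 j(EX_1)+\lambda_2 j(EX_2),
\end{align*}
the outer equalities being Theorem 4.1, the remaining ones Theorem 3.3(ii) and linearity of the Bochner integral. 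The last expression equals $j([\lambda_1,EX_1;\lambda_2,EX_2])$ by Theorem 3.3(ii), so injectivity of $j$ yields 1).

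Part 2) is the same computation with a random weight. Since $(\lambda,x)\mapsto[\lambda,x;1-\lambda,u]$ is continuous (by (CC.iii) and (2.2)), $Z=[\xi,X;1-\xi,u]$ is a random element; it is integrable as above, after noting $[\xi,Ku_0;1-\xi,Ku_0]=Ku_0$ by (2.4), and $0<\xi<1$ a.s. forces $0<E\xi<1$, so the right-hand side makes sense. Applying $K$ and then $j$ pointwise gives $j(KZ)=\xi\,j(KX)+(1-\xi)\,j(Ku)$, and, using that $j\circ K$ is Borel so that $\xi$ is independent of $j(KX)$,
\begin{align*}
j(EZ) &= Ej(KZ) = E[\xi\,j(KX)]+(1-E\xi)\,j(Ku) \\
&= (E\xi)\,Ej(KX)+(1-E\xi)\,j(Ku) = (E\xi)\,j(EX)+(1-E\xi)\,j(Ku),
\end{align*}
which equals $j([E\xi,EX;1-E\xi,Ku])$ by Theorem 3.3(ii); injectivity of $j$ finishes 2). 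Part 3) is the special case of 2) obtained by taking $X$ to be the deterministic random element equal to $u$ (trivially independent of $\xi$, with expectation $Ku$) and renaming the constant in 2) as $v$; equivalently, it is the same $j\circ K$ computation, with no independence needed.

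The corollary is essentially bookkeeping on top of Theorems 3.3 and 4.1. The only steps deserving a sentence of care — and the closest thing to an obstacle — are the measurability and integrability of the random convex combinations $Y$ and $Z$ (dispatched by joint continuity (2.2) and the contraction (CC.iv), with $\|j(Kx)\|=d(\theta,Kx)\leqslant d(\theta,x)$ supplying an integrable envelope) and, in part 2), the justification that the bounded scalar $\xi$, independent of $j(KX)$, factors out of the Bochner integral; this is standard, e.g. by approximating $\xi$ (or $j(KX)$) by simple functions.
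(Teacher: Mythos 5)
Your proof is correct and follows essentially the same route as the paper: apply the embedding $j$, use Theorem 4.1 together with the linearity of $K$ (property (2.3)), Theorem 3.3(ii) and the linearity of the Bochner integral (plus independence to factor out $\xi$ in part 2), and conclude by injectivity of $j$. The extra care you take over measurability/integrability of the random convex combinations and the reduction of part 3) to a case needing no independence is sound but not a different method.
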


\begin{proof} Applying Theorem 4.1 and property (2.3), we have
\begin{align*}
j(E([\lambda_1, X_1 ; \lambda_2, X_2]))&=Ej([\lambda_1, KX_1 ; \lambda_2, KX_2])=E(\lambda_1 j(KX_1)+\lambda_2j(KX_2))\\
&=\lambda_1 j(EX_1)+\lambda_2 j(EX_2)=j([\lambda_1, EX_1 ; \lambda_2, EX_2]).\\
j(E([\xi, X; 1-\xi, u]))&=Ej([\xi, KX ; 1-\xi, Ku])=E(\xi. j(KX))+(1-E\xi)j(Ku)\\
&=E\xi.Ej(KX)+(1-E\xi)j(Ku)=j([E\xi, EX; 1-E\xi, Ku]).\\
j(E([\xi, u; 1-\xi, v]))&=E(\xi. j(Ku)+(1-\xi)j(Kv))=j([E\xi, Ku; 1-E\xi, Kv]).
\end{align*}
The proof is completed by the injection of $j$. Note that the conclusions in  this corollary can be proved directly by using the technique of approximation by simple random elements.
\end{proof}

Consider a mapping $\varphi: \X \to \R$, it will be called \emph{convex} if $\varphi([\lambda_i, x_i]_{i=1}^n)\leqslant\sum_{i=1}^n\lambda_i \varphi(x_i),$
for all $x_1,\ldots, x_n \in \X$, $\lambda_1,\ldots, \lambda_n \in (0;1), \sum_{i=1}^n\lambda_i=1$; It will be called \emph{midpoint convex} if $\varphi([1/2, x ; 1/2, y])\leqslant (\varphi(x)+\varphi(y))/2$ for every $x, y \in \X$; It will be called \emph{lower semicontinuous} if $\varphi(x)\leqslant \liminf_n \varphi(x_n)$ whenever $x_n\to x$; It will be called \emph{affine} if both $\varphi$ and $-\varphi$ are convex. If $\X$ is convexifiable, then the notions of convex and affine can be extended for weights $\lambda_1,\ldots, \lambda_n \in [0;1]$. It is easy to see that if $f$ is affine, then so is $f+c$ for every $c\in \R$. Denote by $\X'$ the set of all continuous affine mappings $f: \X \to \R$. 

\begin{lemm}
If $\X$ is convexifiable and $\X$ has more than one element, then $\X'$ separates points of $\X$. In other words, if $f(x)=f(y)$ for all $f\in \X'$, then $x=y$.
\end{lemm}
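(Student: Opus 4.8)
The plan is to leverage the embedding theorem (Theorem 3.3) to transport the separation problem into the Banach space $\mathbb E$, where the Hahn--Banach theorem provides an abundant supply of separating functionals. Since $\X$ is convexifiable, Theorem 3.3 yields a Banach space $(\mathbb E, \|.\|)$ and a map $j: \X \to \mathbb E$ which is isometric, affine in the sense that $j([\lambda, x\,; 1-\lambda, y]) = \lambda j(x) + (1-\lambda) j(y)$, and has image $\mathbb F = j(\X)$ a closed convex subset of $\mathbb E$. The first step is to observe that because $\X$ has more than one element and $j$ is isometric (hence injective), $\mathbb F$ contains at least two distinct points, so $\mathbb E \neq \{0\}$.

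Next I would take $x, y \in \X$ with $x \neq y$ and show there exists $f \in \X'$ with $f(x) \neq f(y)$. Since $j$ is injective, $j(x) \neq j(y)$ in $\mathbb E$. By the Hahn--Banach separation theorem (or simply the fact that the continuous dual of a normed space separates its points), there is a continuous linear functional $\Lambda \in \mathbb E^*$ with $\Lambda(j(x)) \neq \Lambda(j(y))$. Now set $f := \Lambda \circ j : \X \to \R$. I must check that $f \in \X'$, i.e., that $f$ is continuous and affine. Continuity is immediate since $j$ is an isometry (hence continuous) and $\Lambda$ is continuous. For affinity, given $x_1, \ldots, x_n \in \X$ and weights $\lambda_i \in [0;1]$ summing to $1$, iterate property (ii) of Theorem 3.3 (together with the associativity axiom (CC.ii) to reduce the $n$-ary combination to binary ones, or simply note that $j$ sends convex combinations to convex combinations by induction) to get $j([\lambda_i, x_i]_{i=1}^n) = \sum_{i=1}^n \lambda_i j(x_i)$; applying the linear functional $\Lambda$ then gives $f([\lambda_i, x_i]_{i=1}^n) = \sum_{i=1}^n \lambda_i f(x_i)$, so both $f$ and $-f$ are convex, i.e., $f$ is affine. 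Hence $f \in \X'$ and $f(x) = \Lambda(j(x)) \neq \Lambda(j(y)) = f(y)$, which is exactly the desired separation.

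I do not anticipate a serious obstacle here; the content is essentially a clean application of the embedding theorem plus Hahn--Banach. The only point requiring a little care is verifying that $f = \Lambda \circ j$ genuinely lands in $\X'$ as defined in the paper, namely that affinity holds for arbitrary $n$-ary convex combinations with weights in $[0;1]$ rather than just binary ones with weights in $(0;1)$ --- but this follows by a routine induction using (CC.ii) and Theorem 3.3(ii), with the extension to boundary weights $0$ and $1$ justified exactly as in Remark 1 (since we are working in $K(\X)$, or rather in the convexifiable space $\X$ itself). One should also note that the statement is vacuously compatible with the hypothesis ``$\X$ has more than one element'': this is precisely what guarantees $j(x) \neq j(y)$ for some pair and hence that $\X'$ is nontrivial.
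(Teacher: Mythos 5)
Your proposal is correct and follows essentially the same route as the paper's proof: embed $\X$ into the Banach space $\mathbb E$ via Theorem 3.3, use Hahn--Banach to find $\Lambda \in \mathbb E^*$ separating $j(x)$ and $j(y)$, and pull it back as $f = \Lambda \circ j \in \X'$ (the paper phrases this contrapositively, choosing $\overline{f}=(h|_{\mathbb F})\circ j$, but the argument is the same). Your explicit verification that $\Lambda\circ j$ is affine and continuous is a minor elaboration of a step the paper leaves implicit.
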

\begin{proof}
Assume that there exist two elements $x, y\in \X$ and $x\neq y$  such that $f(x)=f(y)$ for all $f\in \X'$. Let $(\mathbb E, \|.\|)$ be the Banach space with dual $\mathbb E^*$ and $j: \X\to \mathbb E\supset \mathbb F=j(\X)$ is the mapping as in Theorem 3.3. Since $f$ is affine on $\X$, $\widetilde{f}=f_\circ j^{-1}$ is also affine on $\mathbb F$, where $j^{-1}: \mathbb F \to \X$ is inverse mapping of $j$.  We denote $\widetilde{\X}'=\{\widetilde{f}=f_\circ j^{-1}: \mathbb F \to \R, f\in \X'\}$ and $\mathbb F^*=\{g|_{\mathbb F}: \mathbb F \to \R, g|_{\mathbb F} \mbox{ is restriction of } g\in \mathbb E^* \mbox{ on }  \mathbb F\}$. It is easy to see that $\mathbb F^* \subset \widetilde{\X}'$ and $\X'\stackrel{\kappa}{=}\widetilde{\X}'$ (the notation $A\stackrel{\kappa}{=}B$ means that there exists an one-to-one correspondence $\kappa: A\to B$). It follows from $x\neq y$ that $j(x)\neq j(y)$ and by the Hahn-Banach separation theorem, there exists $h\in \mathbb E^*$ such that $h(j(x))\neq h(j(y))$. Moreover, since $j(x), j(y) \in \mathbb F$, we have $h|_{\mathbb F}(j(x))\neq h|_{\mathbb F}(j(y))$. Choosing $\overline{f}=(h|_{\mathbb F})_\circ j$, we obtain $\overline{f}\in \X'$ and $\overline{f}(x)\neq \overline{f}(y)$, this is the contradiction. It implies $x=y$, so $\X'$ separates points of $\X$.
\end{proof}

\noindent \textbf{Remark 3.} If $\X$ is not convexifiable, then $\X'$ does not separate points of $\X$ in general. Indeed, let $(\X,\|.\|)$ be the separable Banach space and  denote by $d$ the metric associated with norm $\|.\|$. For $r>1$, we consider the operation $^r[.,.]$ on $\X$ as follows: $^r[\lambda_i, x_i]_{i=1}^n=\sum_{i=1}^n \lambda_i^r x_i$. As shown in Example 5 in \cite{TM}, $^r[.,.]$ is the convex combination operation ($r$-th power combination) on $(\X,d)$  and the corresponding convexification operator $K_rx=0$ for all $x \in \X$. It implies that $K_r(\X)=\{0\}$ and $\X$ is not convexifiable. For $x\in \X$ and $f\in \X'$ arbitrarily, $f\big(\,^r[n^{-1}, x]_{i=1}^n\big)=\sum_{i=1}^nn^{-1}f(x)=f(x)$ for all $n$. Taking $n\to \infty$ and using the continuity of $f$, we have $f(x)=f(K_rx)=f(0)$. It means that $f$ is a constant function on $\X$, so $\X'$ contains only constant functions (moreover $\X'\stackrel{\kappa}{=}\R$). Hence, $\X'$ does not separate points of $\X$.

\begin{theo} Let $\X$ be a convexifiable CC space and $X$ be an integrable $\X$-valued random element. Then,

(i) $f(X)\in L_\R^1$ for all $f\in \X'$;

(ii) An element $m\in \X$ is the expectation of $X$ if and only if $f(m)=Ef(X)$ for all $f\in \X';$

\end{theo}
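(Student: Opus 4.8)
The plan is to obtain both assertions from the embedding $j\colon\X\to\mathbb F\subset\mathbb E$ of Theorem 3.3 (applicable since $\X$, being convexifiable, coincides with $K(\X)$), a linear growth estimate for continuous affine functionals, and the approximation of an integrable random element by simple ones. For (i), note first that $f(X)=f\circ X$ is $\F$-measurable because $f$ is continuous and $X$ is measurable. The substance is a growth bound: I claim there is $\delta=\delta(f)>0$ such that $|f(x)|\leqslant|f(\theta)|+1+d(x,\theta)/\delta$ for all $x\in\X$, where $\theta$ is the element with $j(\theta)=0$ from the proof of Theorem 3.3. Granting this, $E|f(X)|\leqslant|f(\theta)|+1+Ed(\theta,X)/\delta<\infty$ since $X\in L_\X^1$, which is (i).

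To prove the bound, transport $f$ to $\widetilde f=f\circ j^{-1}$ on the closed convex set $\mathbb F$; it is affine and continuous, and $\widetilde f(0)=f(\theta)$. By continuity of $\widetilde f$ at $0$ choose $\delta>0$ with $|\widetilde f(v)-\widetilde f(0)|\leqslant1$ for every $v\in\mathbb F$ with $\|v\|\leqslant\delta$. If $\|v\|\leqslant\delta$ this already gives $|\widetilde f(v)|\leqslant|\widetilde f(0)|+1$; if $\|v\|>\delta$, set $t=\delta/\|v\|\in(0,1)$, so $tv=tv+(1-t)\,0\in\mathbb F$ by convexity and $\|tv\|=\delta$, whence by affineness $\widetilde f(tv)-\widetilde f(0)=t\,(\widetilde f(v)-\widetilde f(0))$ and $|\widetilde f(v)-\widetilde f(0)|=t^{-1}|\widetilde f(tv)-\widetilde f(0)|\leqslant\|v\|/\delta$. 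Putting $v=j(x)$ and using $\|j(x)\|=\|j(x)-j(\theta)\|=d(x,\theta)$ and $\widetilde f(j(x))=f(x)$ yields the claimed inequality. (The same estimate can also be derived inside $\X$ without the embedding: with $\lambda=\delta/d(x,v)$ one has $d([\lambda,x;1-\lambda,v],v)\leqslant\lambda d(x,v)=\delta$ by (CC.iv), while affineness gives $f([\lambda,x;1-\lambda,v])-f(v)=\lambda\,(f(x)-f(v))$.)

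For (ii), the direction ``$\Leftarrow$'' is immediate: if $f(m)=Ef(X)$ for all $f\in\X'$ then, by the direction ``$\Rightarrow$'' applied to $EX$, also $f(EX)=Ef(X)=f(m)$ for all $f\in\X'$, and Lemma 4.3 (that $\X'$ separates points, the case of a one-point $\X$ being trivial) forces $m=EX$. For ``$\Rightarrow$'', assume $m=EX$ and fix $f\in\X'$. Take the simple random elements $X_n=\psi_n(X)$, so $EX_n\to EX=m$ in $\X$. Writing $X_n=[I_{\Omega_i}, u_{j_i}]_i$ over its finitely many attained values, we get $EX_n=[P(\Omega_i),Ku_{j_i}]_i=[P(\Omega_i),u_{j_i}]_i$ since $\X$ is convexifiable, so affineness of $f$ (extended to weights in $[0;1]$ via Remark 1) gives $f(EX_n)=\sum_iP(\Omega_i)f(u_{j_i})=Ef(X_n)$. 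Now let $n\to\infty$: $f(EX_n)\to f(m)$ by continuity of $f$; and $\psi_n(X(\omega))\to X(\omega)$ pointwise, so $f(X_n)\to f(X)$ pointwise, while the bound of (i) together with $d(u_0,\psi_n(X))\leqslant2d(u_0,X)$ gives $|f(X_n)|\leqslant|f(\theta)|+1+\big(2d(u_0,X)+d(u_0,\theta)\big)/\delta\in L_\R^1$ uniformly in $n$, so dominated convergence yields $Ef(X_n)\to Ef(X)$. Hence $f(m)=\lim_nf(EX_n)=\lim_nEf(X_n)=Ef(X)$.

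The heart of the matter is the growth estimate, together with the recognition that one cannot shortcut ``$\Rightarrow$'' by combining $j(EX)=Ej(X)$ (Theorem 4.1) with linearity of the Bochner integral: a continuous affine functional on $\mathbb F$ need not be the restriction of a bounded linear functional on $\mathbb E$ plus a constant (the inclusion $\mathbb F^*\subset\widetilde{\X}'$ in Lemma 4.3 can be strict), so the approximation-and-dominated-convergence route is genuinely needed. The only other point requiring care is that the dominating random variable in that step is independent of $n$, which is exactly what the inequality $d(u_0,\psi_n(x))\leqslant2d(u_0,x)$ recorded in the preliminaries delivers.
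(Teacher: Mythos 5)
Your proof is correct and follows essentially the same route as the paper: the linear-growth bound $|f(x)|\leqslant C\,(1+d(\theta,x))$ for continuous affine functionals obtained through the embedding (you derive it directly by scaling toward $0\in\mathbb F$, the paper argues by contradiction, but it is the same estimate), then Lemma 4.3 for the ``if'' direction and approximation by simple random elements with dominated convergence for the ``only if'' direction, which the paper only sketches. No gaps.
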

\begin{proof} Throughout this proof, we use the notations as in Theorem 3.3 and Lemma 4.3. 

(i) We will prove that for each $f\in \X'$, there exists a constant $C$ such that  $|f(x)|\leqslant C(d(\theta, x)+1)$ for all $x\in \X$. To do this, it is sufficient to prove that for each $\widetilde{f}\in \widetilde{\X}'$, $|\widetilde{f}(x)|\leqslant C(\|x\|+1)$ for all $x\in \mathbb F$. Assume to the contrary that the conclusion does not hold, then there exists a sequence $\{x_n\}_{n\geqslant 1} \subset \mathbb F$ such that $|\widetilde{f}(x_n)|>n(\|x_n\|+1)$ for all $n$. Since $0<((1+\|x_n\|)n)^{-1}\leqslant 1$ for all $n\geqslant 1$ and $0\in \mathbb F$, the convexity of $\mathbb F$ implies $\frac{x_n}{(1+\|x_n\|)n}\in \mathbb F$. We have
\begin{align*}
\widetilde{f}\Big(\frac{x_n}{(1+\|x_n\|)n}\Big)=\widetilde{f}\Big(\frac{1}{(1+\|x_n\|)n}.x_n+\Big(1-\frac{1}{(1+\|x_n\|)n}\Big).0\Big)=\frac{1}{(1+\|x_n\|)n}\widetilde{f}(x_n)+\Big(1-\frac{1}{(1+\|x_n\|)n}\Big)\widetilde{f}(0).
\end{align*}
It follows
\begin{align*}
\Big|\widetilde{f}\Big(\frac{x_n}{(1+\|x_n\|)n}\Big) - \Big(1-\frac{1}{(1+\|x_n\|)n}\Big)\widetilde{f}(0)\Big|=\frac{|\widetilde{f}(x_n)|}{(1+\|x_n\|)n}>1\;\mbox{ for all } n.\tag{4.1}
\end{align*}
Taking $n\to \infty$, the continuity of $\widetilde{f}$ implies that the LHS of (4.1) tends to 0, this is the contradiction. Therefore, $|f(X)|\leqslant C(d(\theta, X)+1)$ and this inequality implies $f(X)\in L_\R^1$.

(ii) Since $X\in L_\X^1$, the conclusion (i) ensures for the  existence of $Ef(X)$ for all $f\in \X'$. The necessity part of (ii) is easy, it can be proved through using the technique of approximation by simple random elements, so we omit the proof. We now prove the sufficiency part. Assume that $f(m)=Ef(X)$ for all $f\in \X'$, the necessity part follows that $f(m)=f(EX)$ for all $f\in \X'$. If $\X$ has one element, then $EX=m$ obviously. If $\X$ has more than one element, then applying Lemma 4.3, we obtain $m=EX$.
\end{proof}

Note that for $f\in \X'$, $$f(Kx)=f\big(\lim_{n\to\infty}[n^{-1}, x]_{i=1}^n\big)=\lim_{n\to \infty} n^{-1}\sum_{i=1}^n f(x)=f(x)$$ for all $x\in \X$. Hence, the following corollary is obtained immediately from Theorem 4.4.
\begin{coro}
	Let $\X$ be a CC space and $X$ be an integrable $\X$-valued random element. Then, $f(X)=f(KX)\in L_\R^1$ for all $f\in \X'\subset (K(\X))'$ and an element $m\in K(\X)$ is the expectation of $X$ if and only if $f(m)=Ef(KX)$ for all $f\in (K(\X))'.$
\end{coro}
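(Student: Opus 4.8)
The plan is to reduce everything to Theorem 4.4 applied to the convexifiable CC space $K(\X)$ together with the $K(\X)$-valued random element $KX$. Three ingredients are needed: that restriction gives a genuine inclusion $\X'\subset (K(\X))'$, that $f(X)=f(KX)$ holds pointwise, and that $EX=E(KX)$.

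First I would pin down the inclusion $\X'\subset (K(\X))'$. By Proposition 2.1, $(K(\X),d)$ is a separable, complete, convexifiable CC space, and since $K(\X)$ is closed under $[.,.]$ (by (2.3)) and $K$ restricts to the identity on it (by (2.4)), its convex combination operation is exactly the restriction of $[.,.]$, with weights extended from $(0;1)$ to $[0;1]$ as in Remark 1. Hence for $f\in\X'$ the restriction $f|_{K(\X)}$ is continuous and satisfies $f([\lambda_i,x_i]_{i=1}^n)=\sum_{i=1}^n\lambda_i f(x_i)$ for $x_i\in K(\X)$, i.e.\ $f|_{K(\X)}\in (K(\X))'$. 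Moreover the restriction map $f\mapsto f|_{K(\X)}$ is injective: using the identity $f(Kx)=f(x)$ recalled just before the corollary, if $f_1,f_2\in\X'$ agree on $K(\X)$ then $f_1(x)=f_1(Kx)=f_2(Kx)=f_2(x)$ for every $x\in\X$. So $\X'$ may be regarded literally as a subset of $(K(\X))'$.

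Next, the pointwise identity $f(X)=f(KX)$ is immediate from $f(Kx)=f(x)$ evaluated at $x=X(\omega)$. For integrability, recall (as noted just before Subsection 4.1) that $KX$ is an integrable $K(\X)$-valued random element; applying Theorem 4.4(i) to the convexifiable CC space $K(\X)$ and to $KX$ gives $g(KX)\in L_\R^1$ for every $g\in (K(\X))'$, and in particular $f(X)=f(KX)\in L_\R^1$ for every $f\in\X'$.

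Finally, for the characterization of the expectation I would invoke $EX=E(KX)$ (Lemma 3.3 of \cite{Te}, already used in the proof of Theorem 4.1), so that for $m\in K(\X)$ one has $m=EX$ if and only if $m=E(KX)$. Applying the equivalence in Theorem 4.4(ii) to the convexifiable CC space $K(\X)$ and the integrable $K(\X)$-valued random element $KX$ yields: $m=E(KX)$ if and only if $f(m)=Ef(KX)$ for all $f\in (K(\X))'$. Chaining these equivalences gives the claim. The only step needing genuine care is the first one — verifying that the CC structure on $K(\X)$ is precisely the one inherited from $\X$, so that continuity and affinity survive restriction and the inclusion $\X'\subset (K(\X))'$ is literal rather than merely up to restriction; once this is settled, the remainder is a direct substitution into Theorem 4.4.
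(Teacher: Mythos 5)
Your proposal is correct and follows exactly the route the paper intends: after noting the identity $f(Kx)=f(x)$, the paper states the corollary is "obtained immediately from Theorem 4.4," which amounts to applying Theorem 4.4 to the convexifiable space $K(\X)$ and the integrable random element $KX$, together with $EX=E(KX)$. You have merely spelled out the details (the literal inclusion $\X'\subset(K(\X))'$ via injectivity of restriction, and the chain of equivalences) that the paper leaves implicit.
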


\begin{prop} \emph{(\cite{Te}, Theorem 3.1)} Let $\varphi: \X \to \R$ be midpoint convex and lower semicontinuous, and let $X$ be an integrable $\X$-valued random element. Then $\varphi(EX)\leqslant E\varphi(X)$ whenever $\varphi(X)$ is integrable.
\end{prop}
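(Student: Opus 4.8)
The plan is to replace $X$ by $KX$, carry the inequality into the Banach space $\mathbb E$ of Theorem 3.3 via $j$, and there run the classical supporting–hyperplane proof of Jensen's inequality for the Bochner integral. The first move is to prove $\varphi(Kx)\leqslant\varphi(x)$ for every $x\in\X$. Setting $a_k=[2^{-k},x]_{i=1}^{2^k}$, property (2.1) gives $a_{k+1}=[1/2,a_k;1/2,a_k]$, so iterating midpoint convexity yields $\varphi(a_k)\leqslant\varphi(x)$ for all $k\geqslant 0$; since $a_k\to Kx$ by (CC.v) and $\varphi$ is lower semicontinuous, $\varphi(Kx)\leqslant\liminf_k\varphi(a_k)\leqslant\varphi(x)$. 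As $K$ is continuous (property (2.6)) and $\varphi$ is Borel, $KX$ is an integrable $K(\X)$-valued random element (Proposition 2.1), $EX=E(KX)\in K(\X)$, and $\varphi(KX)\leqslant\varphi(X)$ pointwise; in particular $\varphi(KX)^{+}$ is integrable. It therefore suffices to establish $\varphi(EX)\leqslant E\varphi(KX)$ together with the integrability of $\varphi(KX)^{-}$, and then chain $\varphi(EX)\leqslant E\varphi(KX)\leqslant E\varphi(X)<\infty$.

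\emph{Transferring and convexifying on $\mathbb F$.} Let $\tilde\varphi=\varphi\circ j^{-1}:\mathbb F\to\R$, where $j:K(\X)\to\mathbb E$ is the embedding of Theorem 3.3 and $\mathbb F=j(K(\X))$. Since $j$ is an isometry preserving convex combinations, $\tilde\varphi$ is midpoint convex on the closed convex set $\mathbb F$ and lower semicontinuous (because $j^{-1}$ is continuous). A standard dyadic density argument — midpoint convexity forces the convexity inequality for all dyadic weights, and lower semicontinuity then passes it to arbitrary weights in $[0,1]$ — shows that $\tilde\varphi$ is convex on $\mathbb F$. Extend it to $\psi:\mathbb E\to(-\infty,+\infty]$ by $\psi=\tilde\varphi$ on $\mathbb F$ and $\psi=+\infty$ off $\mathbb F$; since $\mathbb F$ is closed and convex, $\psi$ is proper, convex and lower semicontinuous on the Banach space $\mathbb E$.

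\emph{Jensen in $\mathbb E$.} Put $Z=j(KX)$. It is a strongly measurable $\mathbb F$-valued random element with $E\|Z\|=Ed(\theta,KX)\leqslant Ed(\theta,X)<\infty$, hence Bochner integrable, and by Theorem 4.1 its Bochner integral is $EZ=j(EX)\in\mathbb F$. By the Hahn--Banach (Fenchel--Moreau) theorem, $\psi$ equals the pointwise supremum of the continuous affine functions $\ell:\mathbb E\to\R$ with $\ell\leqslant\psi$, and at least one such $\ell_{0}$ exists. For each such $\ell$, say $\ell(y)=\langle x^{*},y\rangle+c$, we have $\psi(Z)\geqslant\ell(Z)$ a.s. and $\ell(Z)\in L_\R^{1}$; applying this with $\ell_{0}$ shows $\psi(Z)^{-}$ is integrable (hence so is $\varphi(KX)^{-}$), so $E\psi(Z)$ is well defined in $(-\infty,+\infty]$, and since the Bochner integral commutes with $x^{*}$,
\[
E\psi(Z)\;\geqslant\;E\ell(Z)\;=\;\langle x^{*},EZ\rangle+c\;=\;\ell(EZ).
\]
Taking the supremum over all admissible $\ell$ gives $E\psi(Z)\geqslant\psi(EZ)$, that is $E\varphi(KX)\geqslant\varphi(EX)$; combined with the first paragraph this yields $\varphi(EX)\leqslant E\varphi(X)$.

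\emph{Expected obstacle.} All ingredients special to convex combination spaces sit in the reduction step; the rest is classical functional analysis. I expect the technical heart to be the two purely convex-analytic facts used above — that a midpoint convex, lower semicontinuous function on a closed convex subset of a Banach space is convex, and that a proper convex lower semicontinuous function on a Banach space is the supremum of its continuous affine minorants — together with the modest bookkeeping needed to make $E\psi(Z)$ and the commutation $E\langle x^{*},Z\rangle=\langle x^{*},EZ\rangle$ legitimate.
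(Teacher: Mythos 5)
Your proposal is correct, and its overall strategy is the same as the paper's: first show $\varphi(Kx)\leqslant\varphi(x)$ via the dyadic sequence $[2^{-k},x]_{i=1}^{2^k}\to Kx$ and lower semicontinuity, then transfer $\varphi$ to $\widetilde\varphi=\varphi\circ j^{-1}$ on the closed convex set $\mathbb F$, upgrade midpoint convexity plus lower semicontinuity to convexity, and invoke Jensen for the Bochner integral together with Theorem 4.1 ($Ej(KX)=j(EX)$). Where you genuinely diverge is in how the Banach-space step is executed. The paper cites an external Jensen theorem (Perlman) applied on $\mathbb F$, and because that route needs $\varphi(KX)$ integrable it must add a second case in which $\varphi$ is truncated by $\varphi_n=\max\{-n,\varphi\}$ and the monotone convergence theorem is used. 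You instead extend $\widetilde\varphi$ to a proper convex lower semicontinuous $\psi$ on all of $\mathbb E$ (using exactly the closedness and convexity of $\mathbb F$ from Theorem 3.3) and run the supporting-affine-minorant / Fenchel--Moreau argument directly; a pleasant by-product is that a single continuous affine minorant $\ell_0$ already bounds $\psi(Z)^-=\varphi(KX)^-$ by an integrable function, while $\varphi(KX)^+\leqslant\varphi(X)^+$ handles the other side, so your route is self-contained and dispenses with the paper's truncation/case split entirely. The trade-off is that you must verify the convexity and lower semicontinuity of the extension $\psi$ and the existence of affine minorants (standard Hahn--Banach separation of the epigraph), whereas the paper delegates all of that to the cited theorem; both arguments are sound, and your simpler iteration $\varphi(a_{k+1})=\varphi([1/2,a_k\,;1/2,a_k])\leqslant\varphi(a_k)$ for the reduction step is also a slight streamlining of the paper's appeal to the proof of Proposition 5.5 (stated there as Proposition 5.3).
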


\begin{proof}
This proposition established Jensen's inequality in CC space and it is a main result of Ter\'an \cite{Te}. It was proved nicely in \cite{Te} by using SLLN. Beside the approach of Ter\'an, we will present in this proof another method through combining embedding theorem and a corresponding version of Jensen's inequality in Banach space.   
First, we will prove that if $\varphi: \X\to\R$ is midpoint convex and lower semicontinuous then $\varphi(Kx)\leqslant \varphi(x)$, $x\in \X$. Indeed, since $[n^{-1}, x]_{i=1}^n \to Kx$, the subsequence $\big\{[2^{-m}, x]_{i=1}^{2^m}\big\}_{m\geqslant 1}$ also tends to $Kx$ when $m\to \infty$. Applying the first part of proof of Proposition 5.3 (will be given in next section), we have 
$$\varphi(Kx)=\varphi\big(\lim_{m\to \infty} [2^{-m}, x]_{i=1}^{2^m}\big)\leqslant \liminf_{m\to \infty} \varphi\big( [2^{-m}, x]_{i=1}^{2^m}\big)\leqslant \liminf_{m\to \infty}\,2^{-m}\sum_{i=1}^{2^m} \varphi(x)=\varphi(x).$$
This reason implies $\varphi(KX)\leqslant \varphi(X)$, in particular $\varphi^+(KX)\leqslant \varphi^+(X)$ where $\varphi^+=\max\{0, \varphi\}$. We now consider two cases as follows:

\textit{Case 1.} $\varphi(KX)$ is integrable. This implies that  $E\varphi(KX)$ is finite and $E\varphi(KX) \leqslant E\varphi(X)$. With $j^{-1}: \mathbb F \to K(\X)$, putting $\widetilde{\varphi}=\varphi_\circ j^{-1} : \mathbb F \to \mathbb R$, we derive 
$$\widetilde{\varphi}(x/2 + y/2)=\varphi\big([1/2, j^{-1}(x)\,; 1/2, j^{-1}(y)]\big)\leqslant \big(\varphi_\circ j^{-1}(x)+\varphi_\circ j^{-1}(y)\big)/2=\big(\widetilde{\varphi}(x) + \widetilde{\varphi}(y)\big)/2$$ for all $x, y \in \mathbb F$, it means that $\widetilde{\varphi}$ is midpoint convex on $\mathbb F$. Since $\varphi$ is lower semicontinuous on $\X$ and $j^{-1}$ is isometric, $\widetilde{\varphi}$ is lower semicontinuous on $\mathbb F$. Then, $\widetilde{\varphi}$ is midpoint convex as well as lower semicontinuous on $\mathbb F$, it implies that $\widetilde{\varphi}$ is convex on $\mathbb F$. Applying Jensen's inequality (\cite{Pe}, Theorem 3.10(ii)), we get  $\widetilde{\varphi}(E(j(KX)))\leqslant E \widetilde{\varphi}(j(KX))$. On the other hand, Theorem 4.1 follows that $\widetilde{\varphi}(j(E(KX)))= \widetilde{\varphi}(E (j(KX)))$, and this is equivalent to  $\varphi(E(KX))= \widetilde{\varphi}(E (j(KX)))$. Combining the arguments above, we obtain $\varphi(EX)=\varphi(E(KX))\leqslant E\varphi(KX)\leqslant E\varphi(X)$.

\textit{Case 2.}  $\varphi(KX)$ is not integrable. Putting $\varphi_n=\max\{-n, \varphi\}$, $n=1, 2, \ldots$, we have $\varphi_n \searrow \varphi$ and $\varphi_n(KX)$ is integrable for each $n$ thanks to $\varphi^+ \geqslant \varphi_n\geqslant -n$. It is not hard to check that $\{\varphi^+, \varphi_n, n\geqslant 1\}$ is also a collection of lower semicontinuous and midpoint convex functions on $\X$. According to Case 1, we obtain $\varphi_n(EX)\leqslant E\varphi_n(X)$ for all $n$. Taking $n\to \infty$ and using the monotone convergence theorem, we derive $\varphi(EX)\leqslant E\varphi(X)$. This completes the proof.
\end{proof}

\subsection{On notion of conditional expectation}
The notion of conditional  expectation of a random element taking values in concrete metric spaces was introduced by some authors via various ways. For example, Herer \cite{He3} constructed this notion in finitely compact metric space with nonnegative curvature. Sturm \cite{St} dealt with problem in global NPC space and conditional expectation was defined as a minimizer of the ``variance''. Other definitions can be found in \cite{CS, He2, Fi}. In this part, we will discuss the notion of conditional expectation in CC space $\X$ and stress that all presented results below will extend corresponding ones in Banach space. The scheme to construct this notion will be proceeded through approximation method traditionally.

Let $X\in L_\X^1$. If $X=[I_{(X=x_i)}, x_i]_{i=1}^n$ is simple, then the \emph{conditional expectation} of $X$ relative to a $\sigma$-algebra $\G\subset \F$ is defined by $E(X|\mathcal G)=[E(I_{(X=x_i)}|\G), Kx_i]_{i=1}^n$ (A). With this definition, maybe the readers naturally wonder that why we do not use another form of conditional expectation, such as $E(X|\mathcal G)=[E(I_{(X=x_i)}|\G), x_i]_{i=1}^n$ (B). This can be clarified that the definition (B) will not extend the notion of expectation when $\G=\{\emptyset, \Omega\}$, and a more profound reason is that (B) will depend on the representation of $X$ while (A) will not (see property (2.5)). Hence, the definition (A) is more suitable than (B). 

From the definition (A) above, we can prove with some simple calculations that if $X$ and $Y$ are simple random elements, then $d(E(X|\G), E(Y|\G))\leqslant E(d(X, Y)|\G)$ a.s.,
where $\G$ is some sub-$\sigma$-algebra of $\F$. We now consider the general case, let $X$ be an integrable random element, i.e., $X\in L^1_\X$, the condition expectation of $X$ is defined (up to a null set) by $E(X|\G)=\lim_{n\to \infty} E(\psi_n(X)|\G)$ a.s., where the mapping $\psi_n$ was mentioned in the first part of Section 4. Note that the limit in the RHS exists due to the completeness of $L^1_\X(\G)$. It is easy to see from the above definition that if $X\in L^1_\X$ then $E(X|\G)\in L^1_{K(\X)}(\G)$.  Moreover, by applying approximation method and the Lebesgue dominated convergence theorem for conditional expectation in $\R$, we also find $d(E(X|\G), E(Y|\G))\leqslant E(d(X, Y)|\G)$ for $X, Y \in L^1_\X$ and in particular, $\|E(X|\G)\|_a\leqslant E(\|X\|_a|\G), a\in K(\X)$.

First, we will establish the Lebesgue dominated convergence theorem for conditional expectation in CC space.
\begin{prop}
Let $X_n, X$ be integrable $\X$-valued  random elements. Assume that the following hold:

(i) $d(X_n, X) \to 0$ a.s. as $n\to \infty$,

(ii) there exist a function $f \in L^1_{\R}$ and some $a\in \X$ such that $\|X_n\|_{a} \leqslant f$ a.s. for all $n$.\\
Then $d(E(X_n|\G), E(X|\G))\to 0$ a.s. as $n\to \infty$.  
\end{prop}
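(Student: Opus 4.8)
The plan is to reduce the statement to the classical conditional dominated convergence theorem in $\R$ by exploiting the inequality $d(E(X_n|\G),E(X|\G))\leqslant E(d(X_n,X)|\G)$ a.s., which was already noted for $X_n,X\in L^1_\X$ in the paragraph preceding the proposition. Thus the real content is to show that the random variables $Y_n:=E(d(X_n,X)|\G)$ converge to $0$ almost surely. Since $d(X_n,X)\to 0$ a.s. by hypothesis (i), and since the ordinary conditional expectation is monotone and continuous under monotone limits, it suffices to produce an integrable dominating function for the sequence $\{d(X_n,X)\}_n$ and then invoke the conditional version of the dominated convergence theorem (the version for real-valued random variables and sub-$\sigma$-algebras of $\F$, which is standard).

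First I would set up the domination: by hypothesis (ii) there are $f\in L^1_\R$ and $a\in\X$ with $\|X_n\|_a=d(a,X_n)\leqslant f$ a.s. for all $n$. Letting $n\to\infty$ and using (i) together with continuity of $d$, we also get $d(a,X)\leqslant f$ a.s.; in particular $X$ (which is already assumed integrable) satisfies $\|X\|_a\leqslant f$. Then by the triangle inequality $d(X_n,X)\leqslant d(X_n,a)+d(a,X)\leqslant 2f$ a.s. for every $n$, and $2f\in L^1_\R$. Hence the real-valued sequence $g_n:=d(X_n,X)$ satisfies $g_n\to 0$ a.s. and $0\leqslant g_n\leqslant 2f$ a.s. with $2f$ integrable.

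Next I would apply the conditional dominated convergence theorem in $\R$ to the sequence $g_n$: it yields $E(g_n|\G)\to E(0|\G)=0$ a.s. as $n\to\infty$. Finally, combining with the estimate recalled above,
\begin{align*}
0\leqslant d\big(E(X_n|\G),E(X|\G)\big)\leqslant E\big(d(X_n,X)|\G\big)=E(g_n|\G)\xrightarrow[n\to\infty]{}0\quad\text{a.s.},
\end{align*}
which gives $d(E(X_n|\G),E(X|\G))\to 0$ a.s. and completes the proof.

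I do not anticipate a genuine obstacle here: the only subtle point is making sure the a.s. dominating function passes to the limit so that $d(X_n,X)$ itself is dominated (one cannot directly dominate $d(X_n,X)$ from (ii) without first bounding $\|X\|_a$), and that one is entitled to use the conditional-expectation contraction inequality, which has already been established in the paragraph just before this proposition via approximation by simple random elements. After that the argument is just the classical conditional dominated convergence theorem applied to real random variables, so the remaining steps are routine.
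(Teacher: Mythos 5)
Your proposal is correct and follows essentially the same route as the paper: the contraction inequality $d(E(X_n|\G),E(X|\G))\leqslant E(d(X_n,X)|\G)$ followed by the conditional dominated convergence theorem in $\R$. The only (immaterial) difference is the choice of dominating function — you pass to the limit in (ii) to get $d(X_n,X)\leqslant 2f$, while the paper simply uses $d(X_n,X)\leqslant f+\|X\|_a$, which is integrable because $X$ is assumed integrable.
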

\begin{proof}
By triangular inequality, $d(X_n, X)\leqslant \|X_n\|_{a}+\|X\|_{a}\leqslant f +\|X\|_{a}$ a.s. Since $\|X\|_{a} +f \in L^1_\R$, it follows from the Lebesgue dominated convergence theorem for conditional expectation in $\R$ that
$$\lim_{n\to \infty} d(E(X_n|\G), E(X|\G)) \leqslant \lim_{n\to \infty} E(d(X_n, X)|\G)=E(\lim_{n\to \infty}d(X_n, X)|\G)=0\;\mbox{ a.s.}$$
The proof is completed.
\end{proof}

\begin{theo}
Let $X$ be an integrable $\X$-valued random element. Then, $j(E(X|\G))=j(E(KX|\G))= E(j(KX)|\G)$ a.s., where $j: K(\X) \to  \mathbb E$ is the mapping presented in Theorem 3.3.
\end{theo}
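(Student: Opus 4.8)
The plan is to mimic the proof of Theorem 4.1, replacing expectation by conditional expectation and the Bochner integral by the Bochner conditional expectation. First I would record two preliminaries. Since $j(KX)$ is a Borel-measurable random element in the separable Banach space $\mathbb E$ with $E\|j(KX)\|=Ed(\theta,KX)\leqslant Ed(\theta,X)<\infty$ (the element $\theta$ being as in Theorem 3.3), the Bochner conditional expectation $E(j(KX)|\G)$ is well defined. Next, $E(X|\G)=E(KX|\G)$ a.s.: for a simple $X=[I_{(X=x_i)},x_i]_{i=1}^n$ one has $KX=[I_{(X=x_i)},Kx_i]_{i=1}^n$, so by definition (A) and the idempotency of $K$ (property (2.4)), $E(KX|\G)=[E(I_{(X=x_i)}|\G),Kx_i]_{i=1}^n=E(X|\G)$; since $K\psi_n(X)$ is simple with $d(K\psi_n(X),KX)\leqslant d(\psi_n(X),X)\to0$, the contraction $d(E(\cdot|\G),E(\cdot|\G))\leqslant E(d(\cdot,\cdot)|\G)$ lets this identity pass to the limit for every $X\in L^1_\X$. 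Consequently it suffices to prove $j(E(KX|\G))=E(j(KX)|\G)$ a.s.

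For the simple case $X=[I_{(X=x_i)},x_i]_{i=1}^n$, fix $\omega$; the weights $E(I_{(X=x_i)}|\G)(\omega)$ lie in $[0;1]$ and sum to $1$, so Theorem 3.3(ii), extended to weights in $[0;1]$ via Remark 1, applies pointwise and gives
\[
j(E(KX|\G))=j\big([E(I_{(X=x_i)}|\G),Kx_i]_{i=1}^n\big)=\sum_{i=1}^n E(I_{(X=x_i)}|\G)\,j(Kx_i)\quad\text{a.s.}
\]
On the other hand $j(KX)=\sum_{i=1}^n I_{(X=x_i)}\,j(Kx_i)$, and since the $j(Kx_i)$ are fixed vectors of $\mathbb E$, the linearity of the Bochner conditional expectation yields $E(j(KX)|\G)=\sum_{i=1}^n E(I_{(X=x_i)}|\G)\,j(Kx_i)$ a.s. This settles the claim for simple random elements.

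For general $X\in L^1_\X$ I would approximate by $X_n=\psi_n(X)$. Here $d(X_n,X)=d(\psi_n(X),X)\to0$ everywhere (as $\{u_j\}$ is dense) and $d(u_0,\psi_n(X))\leqslant 2d(u_0,X)$, so the random elements $KX_n,KX$ satisfy the hypotheses of Proposition 4.7, giving $d(E(KX_n|\G),E(KX|\G))\to0$ a.s.; by continuity of $j$ and the simple case, $j(E(KX|\G))=\lim_n j(E(KX_n|\G))=\lim_n E(j(KX_n)|\G)$ a.s. Finally $\|j(KX_n)-j(KX)\|=d(KX_n,KX)\leqslant d(X_n,X)\to0$ a.s. and $\|j(KX_n)\|=d(\theta,KX_n)\leqslant d(\theta,X_n)\leqslant d(\theta,u_0)+2d(u_0,X)\in L^1_\R$, so from $\|E(j(KX_n)|\G)-E(j(KX)|\G)\|\leqslant E(\|j(KX_n)-j(KX)\|\,|\,\G)$ a.s. and the scalar conditional dominated convergence theorem, $E(j(KX_n)|\G)\to E(j(KX)|\G)$ a.s. Combining these limits with the first preliminary ($j(E(X|\G))=j(E(KX|\G))$) completes the proof.

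The main obstacle I anticipate is purely bookkeeping: every conditional expectation here is defined only up to a null set, the approximation in the very definition of $E(X|\G)$ is tied to the specific sequence $\psi_n$, and one accumulates exceptional null sets at three places (the convergence $E(KX_n|\G)\to E(KX|\G)$, the simple-case identity for each $n$, and the conditional dominated convergence step). One must check that their union is still null, after which each individual step is routine and the argument is the faithful conditional analogue of Theorem 4.1.
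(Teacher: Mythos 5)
Your proposal is correct and follows essentially the same route as the paper: establish the identity for simple random elements via the idempotency of $K$ and the linearity of $j$, then pass to general $X\in L^1_\X$ by approximating with $\psi_n(X)$, invoking the CC-space dominated convergence theorem for conditional expectation (Proposition 4.7), the continuity of $j$, and dominated convergence for the Bochner conditional expectation. Your extra care about the $[0;1]$-valued weights and the accumulation of null sets is sound but does not change the argument.
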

\begin{proof}
As mentioned in Theorem 4.1, $j(KX)$ is a random element in $\mathbb E$ and $j(KX)\in L_{\mathbb E}^1$. Hence, there exists the conditional expectation $E(j(KX)|\G)$, moreover $E(j(KX)|\G)\in j(K(\X))$ a.s. First, if  $X=[I_{(X=x_i)}, x_i]_{i=1}^n$ is simple, then by the definition of conditional expectation and the idempotence of $K$
\begin{align*}
E(KX|\G)&=E([I_{(X=x_i)}, Kx_i]_{i=1}^n|\G)=[E(I_{(X=x_i)}|\G), KKx_i]_{i=1}^n=E(X|\G)\;\mbox{ a.s.}\\
j(E(KX|\G))&=j([E(I_{(X=x_i)}|\G), Kx_i]_{i=1}^n)=\sum_{i=1}^n E(I_{(X=x_i)}|\G) j(Kx_i)=E(j(KX)|\G)\;\mbox{ a.s.}
\end{align*}
Next, if $X\in L_\X^1$ then there exists a sequence $\{X_n, n\geqslant 1\}$ of simple random elements such that $X_n\to X$, $\|X_n\|_{u_0}\leqslant 2\|X\|_{u_0}$, $E(X_n|\G)\to E(X|\G)$ a.s. Applying Proposition 4.6, we obtain $E(KX_n|\G)\to E(KX|\G)$ a.s. Moreover, it follows from the previous case that $E(KX_n|\G)=E(X_n|\G)$ for all $n$, and the uniqueness of limit implies $E(KX|\G)=E(X|\G)$ a.s. Since $j$ is continuous,
$$j(E(KX|\G))=j\big(\lim_{n\to \infty} E(KX_n|\G)\big)=\lim_{n\to \infty} j(E(KX_n|\G))=\lim_{n\to \infty} E(j(KX_n)|\G)=E(j(KX)|\G)\;\mbox{ a.s.},$$
where the last limit holds due to Lebesgue's dominated convergence theorem for conditional expectation in Banach space. The proof is completed.
\end{proof}

It is well-known that definition of conditional expectation $E(X|\G)$ via approximate method in separable Banach space $\mathcal E$ is equivalent to the result: ``\textit{For $X\in L_{\mathcal E}^1$, then $Y=E(X|\G)$ if and only if $Y\in L_{\mathcal E}^1(\G)$ and $EXI_A=EYI_A$ for all $A\in \G$}''. The same equivalence in CC space will be established in following result and its proof is based on embedding theorem.
\begin{theo}
Let $X\in L^1_\X$ and $a\in K(\X)$. Then $Y=E(X|\G)$ if and only if $Y\in L_{K(\X)}^1(\G)$ and
$E([I_A, X ; I_{\overline{A}}, a])=E([I_A, Y; I_{\overline{A}}, a])$ for all $A\in \G$.
\end{theo}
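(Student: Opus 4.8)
The plan is to transport the statement into the Banach space $\mathbb E$ by means of the embedding $j$ of Theorem 3.3, and then to reduce it to the classical characterisation of the conditional expectation of a Bochner-integrable random element. For $A\in\G$ write $Z_A:=[I_A,X\,;I_{\overline A},a]$ and $W_A:=[I_A,Y\,;I_{\overline A},a]$; these are the $\X$-valued random elements equal to $X$, resp.\ $Y$, on $A$ and equal to $a$ on $\overline A$, and they are integrable since $X\in L^1_\X$, $Y$ (when assumed to lie in $L^1_{K(\X)}(\G)$) is integrable, and $a\in K(\X)\subset\X$. Applying the convexification operator pointwise and using $Ka=a$, one has $KZ_A=[I_A,KX\,;I_{\overline A},a]$ and $KW_A=[I_A,Y\,;I_{\overline A},a]$, where $KY=Y$ because $Y$ takes values in $K(\X)$.

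First I would compute $j$ of the two expectations. By Theorem 4.1, $j(EZ_A)=E\,j(KZ_A)$ and $j(EW_A)=E\,j(KW_A)$; since $j$ is affine and, by Remark 1, this affine relation extends to weights in $[0;1]$ for elements of $K(\X)$, we get $j(KZ_A)=I_A\,j(KX)+I_{\overline A}\,j(a)$ and $j(KW_A)=I_A\,j(Y)+I_{\overline A}\,j(a)$. Taking expectations gives $j(EZ_A)=E\big(I_A\,j(KX)\big)+P(\overline A)\,j(a)$ and $j(EW_A)=E\big(I_A\,j(Y)\big)+P(\overline A)\,j(a)$. Since $j$ is injective and $EZ_A,EW_A\in K(\X)$, the equality $EZ_A=EW_A$ holding for every $A\in\G$ is therefore equivalent to $E\big(I_A\,j(KX)\big)=E\big(I_A\,j(Y)\big)$ holding for every $A\in\G$, the reference term $P(\overline A)\,j(a)$ cancelling on both sides.

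Next, $j(KX)\in L^1_{\mathbb E}$ (as observed in the proof of Theorem 4.1) and $j(Y)$ is $\G$-measurable with $E\|j(Y)\|=E\,d(\theta,Y)<\infty$ (recall $j(\theta)=0$), so $j(Y)\in L^1_{\mathbb E}(\G)$. By the standard characterisation of conditional expectation in the separable Banach space $\mathbb E$, the condition $E(I_A\,j(KX))=E(I_A\,j(Y))$ for all $A\in\G$ is equivalent to $j(Y)=E(j(KX)\,|\,\G)$ a.s., and by Theorem 4.7 the right-hand side equals $j(E(X|\G))$ a.s.; injectivity of $j$ then turns this into $Y=E(X|\G)$ a.s. Chaining these equivalences and using that $E(X|\G)\in L^1_{K(\X)}(\G)$ for every $X\in L^1_\X$ (noted in the construction of conditional expectation) yields both implications: if $Y=E(X|\G)$ then automatically $Y\in L^1_{K(\X)}(\G)$ and the identity $EZ_A=EW_A$ holds for all $A\in\G$, while if $Y\in L^1_{K(\X)}(\G)$ satisfies that identity for all $A\in\G$ then $Y=E(X|\G)$ a.s.

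Once Theorems 3.3, 4.1 and 4.7 are in hand there is no serious obstacle; the only points demanding care are the legitimacy of the degenerate weights $0$ and $1$ in $[I_A,\cdot\,;I_{\overline A},\cdot]$ — which is exactly what Remark 1 secures, since $KX$, $Y$ and $a$ all lie in $K(\X)$ — and the verification that the $a$-contribution $P(\overline A)\,j(a)$ is literally the same on the two sides, so that the problem collapses to the scalar-type condition to which the Banach-space theory directly applies.
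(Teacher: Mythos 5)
Your proposal is correct and takes essentially the same route as the paper's proof: transport both expectations through $j$ via Theorem 4.1, reduce the identity to $E(I_A\,j(KX))=E(I_A\,j(Y))$ for all $A\in\G$, and conclude with the standard Banach-space characterisation of conditional expectation together with $j(E(X|\G))=E(j(KX)|\G)$ (Theorem 4.8 in the paper, which you cite as 4.7) and the injectivity of $j$. No gaps.
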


\begin{proof}
\textit{Necessary:} If $Y=E(X|\G)$ then $Y\in L_{K(\X)}^1(\G)$ obviously. For $A\in \G$, by Theorem 4.1 and Theorem 4.8,
\begin{align*}
j(E([I_A, Y; I_{\overline{A}}, a]))&=E(I_A j(Y)+I_{\overline{A}}j(a))=E(I_A j(E(X|\G))+I_{\overline{A}}j(a))=E(I_A E(j(KX)|\G)+I_{\overline{A}}j(a))\\
&=E(E(I_A  j(KX)|\G)+I_{\overline{A}}j(a))=E(I_A  j(KX)+I_{\overline{A}}j(a))=j(E([I_A, X ; I_{\overline{A}}, a])).
\end{align*}
The injection of $j$ implies $E([I_A, X ; I_{\overline{A}}, a])=E([I_A, Y; I_{\overline{A}}, a])$.

\emph{Sufficiency:} Assume that there exists $Y\in L_{K(\X)}^1(\G)$ such that $E([I_A, X ; I_{\overline{A}}, a])=E([I_A, Y; I_{\overline{A}}, a])$ for all $A\in \G$. We now need to prove that $Y=E(X|\G)$. Observe that the conditional expectation $E(X|\G)$ exists due to $X\in L_\X^1$. By the hypothesis, we have $j(E([I_A, X ; I_{\overline{A}}, a]))=j(E([I_A, Y; I_{\overline{A}}, a]))$ for all $A\in \G$, this is equivalent to $E(I_Aj(KX))=E(I_A j(Y))$ for all $A\in \G$. It is obvious that $j(Y)$ is $\G$-measurable and integrable, so $j(Y)=E(j(KX)|\G)$. On the other hand, $E(j(KX)|\G)=j(E(X|\G))$ by Theorem 4.8. Thus $j(Y)=j(E(X|\G))$ and it follows that $Y=E(X|\G)$.
\end{proof}

The proposition below will give some basic properties of conditional expectation. The proof is easy thanks to Theorem 4.1 and Theorem 4.8.

\begin{prop} Let $X, Y \in L^1_\X$. Then, the following hold for $\omega \in \Omega$ a.s.:\\
1) $E(E(X|\G))=EX$.\\
2) If $\sigma(X)$ and $\G$ are independent then $E(X|\G)=EX$.\\
3) If $X$ is $\G$-measurable then $E(X|\G)=KX$.\\
4) If $\xi$ is a real-valued random variable with $0< \xi< 1$ and $\xi$ is $\G$-measurable, then 
$$E([\xi, X ; 1-\xi, Y]|\G)=[\xi, E(X|\G) ; 1-\xi , E(Y|\G)].$$
In particular, $E([\lambda, X ; 1-\lambda, Y] | \G)=[\lambda, E(X|\G) ; 1-\lambda, E(Y|\G)]$ for $\lambda \in (0;1)$.\\
5) If $\G_1, \G_2$ are two $\sigma$-algebras and $\G_1\subset \G_2$ then $E(E(X|\G_1)|\G_2)=E(E(X|\G_2)|\G_1)=E(X|\G_1)$.
\end{prop}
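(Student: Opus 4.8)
The plan is to transport each of the five identities into the separable Banach space $\mathbb E$ by means of the map $j$, prove the Banach-space analogue there, and then pull back using the injectivity of $j$. The two workhorses are Theorem 4.8, which gives $j(E(Z|\G))=E(j(KZ)|\G)$ a.s.\ for $Z\in L^1_\X$ (the right-hand side being the Bochner conditional expectation in $\mathbb E$, well defined since $j(KZ)\in L^1_{\mathbb E}$), and Theorem 4.1, which gives $j(EW)=E(j(KW))$; I will also repeatedly use that $K$ is idempotent (property (2.4)), so $KV=V$ a.s.\ for any $K(\X)$-valued random element $V$, and in particular $KE(Z|\G)=E(Z|\G)$ a.s. With this dictionary in hand, each part reduces to a standard property of Bochner conditional expectation: the law of total expectation, ``independence kills the conditioning'', ``measurability makes conditioning trivial'', linearity together with extraction of a bounded $\G$-measurable scalar, and the tower property.

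Concretely, for 1) I would write $j(E(E(X|\G)))=E(j(E(X|\G)))=E(E(j(KX)|\G))=E(j(KX))=j(EX)$, using Theorem 4.1 on the $K(\X)$-valued element $E(X|\G)$, then Theorem 4.8, then total expectation in $\mathbb E$, then Theorem 4.1 again; injectivity of $j$ yields $E(E(X|\G))=EX$. For 2), since $j$ and $K$ are Borel (being continuous by Theorem 3.3(iii) and property (2.6)), $j(KX)$ is $\sigma(X)$-measurable and hence independent of $\G$, so $E(j(KX)|\G)=E(j(KX))=j(EX)$; comparing with $j(E(X|\G))=E(j(KX)|\G)$ and using injectivity gives $E(X|\G)=EX$. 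For 3), if $X$ is $\G$-measurable then so is $j(KX)$, hence $E(j(KX)|\G)=j(KX)$, so Theorem 4.8 gives $j(E(X|\G))=j(KX)$ and injectivity gives $E(X|\G)=KX$. For 5), with $\G_1\subset\G_2$ and using $KE(X|\G_i)=E(X|\G_i)$,
$$j(E(E(X|\G_1)|\G_2))=E(j(E(X|\G_1))|\G_2)=E(E(j(KX)|\G_1)|\G_2)=E(j(KX)|\G_1)=j(E(X|\G_1)),$$
since $E(j(KX)|\G_1)$ is already $\G_2$-measurable; symmetrically $j(E(E(X|\G_2)|\G_1))=E(E(j(KX)|\G_2)|\G_1)=E(j(KX)|\G_1)=j(E(X|\G_1))$ by the tower property, and injectivity of $j$ closes the argument.

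For 4), set $Z=[\xi,X\,;1-\xi,Y]\in L^1_\X$. The step needing care is that for a.e.\ fixed $\omega$ the weights $\xi(\omega),1-\xi(\omega)$ and the points $X(\omega),Y(\omega)$ are deterministic, so the linearity of $K$ (property (2.3)) applies pointwise and gives $KZ=[\xi,KX\,;1-\xi,KY]$ a.s.; then by Theorem 3.3(ii), again pointwise, $j(KZ)=\xi\,j(KX)+(1-\xi)\,j(KY)$ as an $\mathbb E$-valued random element. Since $\xi$ is bounded and $\G$-measurable, linearity and scalar extraction for Bochner conditional expectation yield $E(j(KZ)|\G)=\xi\,E(j(KX)|\G)+(1-\xi)\,E(j(KY)|\G)=\xi\,j(E(X|\G))+(1-\xi)\,j(E(Y|\G))=j([\xi,E(X|\G)\,;1-\xi,E(Y|\G)])$ a.s., the last two equalities using Theorem 4.8 and then Theorem 3.3(ii). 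As the left-hand side equals $j(E(Z|\G))$ by Theorem 4.8, injectivity of $j$ gives the asserted identity, and the constant case $\xi\equiv\lambda$ is the stated particular case. I expect 4) to be the only part requiring genuine care — namely justifying $j(KZ)=\xi\,j(KX)+(1-\xi)\,j(KY)$ at the level of random elements and invoking the scalar-extraction rule for Bochner conditional expectations; parts 1)--3) and 5) are routine translations once Theorems 4.1 and 4.8 are available.
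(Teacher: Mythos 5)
Your proposal is correct and follows exactly the route the paper intends: the paper's entire proof of this proposition is the remark that it ``is easy thanks to Theorem 4.1 and Theorem 4.8,'' i.e., transport each identity into $\mathbb E$ via $j$, apply the standard Bochner conditional-expectation facts, and pull back by injectivity of $j$. Your write-up simply makes explicit the details (idempotence of $K$ on $K(\X)$-valued elements, pointwise use of (2.3) and Theorem 3.3(ii) for part 4) that the paper leaves to the reader.
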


The Jensen inequality for conditional expectation in CC space will be given in the following proposition. Note here that this result does not totally extend Proposition 4.6.
\begin{prop}
Let $\varphi : \X\to \R$ be a midpoint convex and continuous function, sub-$\sigma$-algebra $\G\subset \F$ and let $X \in L^1_\X$ such that $\varphi(X)\in L^1_\R$. Then $\varphi(E(X|\G))\leqslant E(\varphi(X)|\G)$ a.s.
\end{prop}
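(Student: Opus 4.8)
The plan is to reduce the conditional Jensen inequality in the CC space $\X$ to the corresponding inequality in the Banach space $\mathbb E$, exactly as was done for the unconditional case in the proof of Proposition~4.6. First I would note that, since $\varphi$ is midpoint convex and continuous, the argument given at the start of the proof of Proposition~4.6 shows $\varphi(Kx)\leqslant\varphi(x)$ for every $x\in\X$, hence $\varphi(KX)\leqslant\varphi(X)$ a.s.; in particular $\varphi(KX)\in L^1_\R$ (it is dominated above by $\varphi(X)\in L^1_\R$, and below we will handle integrability by the truncation trick if needed). Combined with $E(\varphi(X)\mid\G)\geqslant E(\varphi(KX)\mid\G)$ a.s., it therefore suffices to prove $\varphi(E(KX\mid\G))\leqslant E(\varphi(KX)\mid\G)$ a.s.

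Next I would transfer the problem through the embedding $j\colon K(\X)\to\mathbb E$ of Theorem~3.3. Set $\widetilde\varphi=\varphi\circ j^{-1}\colon\mathbb F\to\R$, where $j^{-1}\colon\mathbb F\to K(\X)$. As already observed in the proof of Proposition~4.6, $\widetilde\varphi$ is midpoint convex on the convex set $\mathbb F$ (because $j$ carries the CC midpoint to the vector-space midpoint by Theorem~3.3(ii)) and lower semicontinuous since $j^{-1}$ is an isometry; being midpoint convex and lower semicontinuous on a convex subset of a Banach space, $\widetilde\varphi$ is in fact convex on $\mathbb F$. By Theorem~4.8 we have $j(E(KX\mid\G))=E(j(KX)\mid\G)$ a.s., and $j(KX)\in L^1_{\mathbb E}$. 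Applying the conditional Jensen inequality for convex functions on a closed convex subset of a Banach space (e.g. the Banach-space version cited as \cite{Pe}, Theorem~3.10, in its conditional form) to $\widetilde\varphi$ and the Bochner-integrable random vector $j(KX)$ — after extending $\widetilde\varphi$ to a convex lower semicontinuous function on all of $\mathbb E$, or invoking the fact that $E(j(KX)\mid\G)$ lands a.s. in $\mathbb F$ — we obtain
\begin{align*}
\widetilde\varphi\big(E(j(KX)\mid\G)\big)\leqslant E\big(\widetilde\varphi(j(KX))\mid\G\big)\quad\text{a.s.}
\end{align*}
Unwinding the definitions, the left side equals $\varphi(E(KX\mid\G))$ and the right side equals $E(\varphi(KX)\mid\G)$, which gives the desired inequality and hence $\varphi(E(X\mid\G))=\varphi(E(KX\mid\G))\leqslant E(\varphi(KX)\mid\G)\leqslant E(\varphi(X)\mid\G)$ a.s.

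I expect the main obstacle to be the integrability bookkeeping and the precise invocation of the Banach-space conditional Jensen inequality. Unlike Proposition~4.6, here $\varphi(X)$ is assumed integrable from the outset, which is convenient, but $\widetilde\varphi$ a priori is only defined on $\mathbb F$, so one must either extend it to a convex lower semicontinuous function on $\mathbb E$ (possible since $\mathbb F$ is closed and convex, using a supporting-hyperplane / lower-semicontinuous-envelope construction) or argue directly that $E(j(KX)\mid\G)\in\mathbb F$ a.s. so that the conditional inequality can be read off by testing against affine minorants of $\widetilde\varphi$ on $\mathbb F$ — the latter in turn relies on $\mathbb F$ being the image of a Radström embedding and on the separability of $\mathbb E$. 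A clean way around this is to write $\widetilde\varphi$ on $\mathbb F$ as the supremum of a countable family of continuous affine functions on $\mathbb E$ (which exists by separability and convexity), apply conditional expectation to each affine minorant using linearity, and take the supremum; the measurability of the resulting supremum and the a.s. exceptional null sets must then be assembled carefully. Apart from this, the truncation argument $\varphi_n=\max\{-n,\varphi\}$ together with the conditional monotone convergence theorem can be used, exactly as in Case~2 of Proposition~4.6, should one wish to avoid assuming more than $\varphi(X)\in L^1_\R$ at intermediate stages.
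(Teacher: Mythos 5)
Your proposal is correct and follows essentially the same route as the paper: the paper's proof is exactly the combination of the embedding Theorem 3.3, the identity $j(E(X|\G))=E(j(KX)|\G)$ from Theorem 4.8, and the conditional Jensen inequality for Banach-valued random elements (cited from To and Wing), applied via the same scheme as Proposition 4.6 with $\widetilde\varphi=\varphi\circ j^{-1}$ convex on $\mathbb F$. Your extra care about extending $\widetilde\varphi$ beyond $\mathbb F$ and about integrability of $\varphi(KX)$ is sensible bookkeeping but does not change the argument.
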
 

\begin{proof}
Combining Jensen's inequality for Banach-valued conditional expectation (e.g., see Theorem in \cite{TW}) with embedding Theorem 3.3 and using simultaneously the same scheme as in proof of Proposition 4.6, we will have the conclusion.
\end{proof}

According to Theorem 4.4(i) and Proposition 4.11, we immediately derive the following corollary.

\begin{coro}
1) If 
$X\in L^p_\X$ then $\|E(X|\G)\|_a^p\leqslant E(\|X\|_a^p|\G)$ a.s., for arbitrarily $a\in K(\X), p\geqslant 1$.\\
2) If $X\in L^1_\X$ then $f(E(X|\G))=E(f(X)|\G)=E(f(KX)|\G)$ a.s. for all $f\in \X'$.
\end{coro}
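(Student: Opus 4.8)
The plan is to derive both parts directly from the results already established, without any new approximation argument. For part~1), fix $a\in K(\X)$ and $p\geqslant 1$ and consider the function $\varphi:\X\to\R$ defined by $\varphi(x)=\|x\|_a^p=d(a,x)^p$. First I would check that $\varphi$ is midpoint convex and continuous: continuity is immediate since $d$ is continuous, and midpoint convexity follows from (CC.iv) together with convexity of $t\mapsto t^p$ on $[0,\infty)$, namely
\begin{align*}
\varphi([1/2,x;1/2,y])=d(a,[1/2,x;1/2,y])^p=d([1/2,a;1/2,a],[1/2,x;1/2,y])^p\leqslant\big(\tfrac12 d(a,x)+\tfrac12 d(a,y)\big)^p\leqslant\tfrac12\varphi(x)+\tfrac12\varphi(y).
\end{align*}
Since $X\in L^p_\X$ means $d(a,X)^p\in L^1_\R$, i.e.\ $\varphi(X)\in L^1_\R$, Proposition~4.11 applies and gives $\varphi(E(X|\G))\leqslant E(\varphi(X)|\G)$ a.s. But $\varphi(E(X|\G))=\|E(X|\G)\|_a^p$ because $E(X|\G)$ takes values in $K(\X)$ and $\varphi$ restricted to $K(\X)$ is exactly $\|\cdot\|_a^p$; this is the claimed inequality.

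For part~2), let $f\in\X'$. By the remark preceding Corollary~4.5 (the computation $f(Kx)=f(x)$ for all $x$), we have $f=f\circ K$ on $\X$, so $f(X)=f(KX)$ pointwise, and Theorem~4.4(i) gives $f(X)\in L^1_\R$, so $E(f(X)|\G)=E(f(KX)|\G)$ makes sense. To get $f(E(X|\G))=E(f(X)|\G)$ I would pass through the embedding: with the notation of Theorem~3.3 and Lemma~4.3, write $\widetilde f=f\circ j^{-1}$, which is a continuous affine map on $\mathbb F\subset\mathbb E$; extend it (or rather, recall that affine continuous functions on a convex subset of a Banach space are restrictions of continuous affine functionals, i.e.\ of the form $h+c$ with $h\in\mathbb E^*$), so that $\widetilde f$ commutes with the Bochner conditional expectation: $\widetilde f(E(j(KX)|\G))=E(\widetilde f(j(KX))|\G)$ a.s. Now Theorem~4.8 gives $E(j(KX)|\G)=j(E(X|\G))$ a.s., hence the left side equals $f(E(X|\G))$, while $\widetilde f(j(KX))=f(KX)=f(X)$, so the right side equals $E(f(X)|\G)$. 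This yields the identity.

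The only genuinely delicate point is the claim that a continuous affine function $\widetilde f$ on the closed convex set $\mathbb F$ (which need not be all of $\mathbb E$) extends to, or at least commutes with conditional expectation like, a continuous affine functional on $\mathbb E$; this is the same issue handled in Lemma~4.3 and in the proof of Theorem~4.4(ii), and one can either invoke that $\X'\stackrel{\kappa}{=}\widetilde\X'\supset\mathbb F^*$ together with an affine-extension argument, or simply observe that what is actually needed is only that $j(E(X|\G))$ lies in $\mathbb F$ a.s.\ (true by Theorem~4.8) and that $f=\widetilde f\circ j$ with $\widetilde f$ affine, so that applying $f$ to the Bochner conditional expectation of $j(KX)$ reproduces the conditional expectation of $f(KX)$ by linearity of the Bochner conditional expectation and continuity; the constant $c$ and the functional $h$ pass through $E(\cdot|\G)$ separately since $E(I_\Omega|\G)=1$. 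I expect this bookkeeping to be routine given Theorem~4.8, so the corollary follows immediately, and I would present it in two short displayed chains as above.
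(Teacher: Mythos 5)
Part 1) is correct and is essentially the paper's intended argument: $\varphi=\|\cdot\|_a^p$ is continuous and midpoint convex (your computation uses $[1/2,a\,;1/2,a]=a$, which is valid precisely because $a\in K(\X)$), $L^p_\X\subset L^1_\X$ since $P$ is a probability measure, and then Proposition 4.11 gives the inequality directly.

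Part 2), as you wrote it, has a genuine gap. The parenthetical claim that a continuous affine function on a closed convex subset $\mathbb F$ of a Banach space is the restriction of $h+c$ with $h\in\mathbb E^*$ is false in general: on the Hilbert cube $K=\{x\in\ell^2:|x_n|\leqslant 2^{-n}\}$ the function $x\mapsto\sum_n 2^{n/2}x_n$ is affine and continuous on $K$, yet any $h\in(\ell^2)^*$ agreeing with it on $K$ up to a constant would satisfy $h(e_n)=2^{n/2}$ and hence be unbounded. So the commutation of $\widetilde f$ with the Bochner conditional expectation cannot be obtained by that extension, and your fallback ("by linearity of the Bochner conditional expectation and continuity\dots routine bookkeeping") is exactly the point that requires proof: an affine continuous function on $\mathbb F$ only respects finite convex combinations, while $E(\cdot|\G)$ is a limit of such, so one must run an approximation by simple random elements together with the growth bound $|\widetilde f(y)|\leqslant C(\|y\|+1)$ from the proof of Theorem 4.4(i) to justify passing to the limit on both sides. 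This can be done, but the paper's intended route is much shorter and avoids the issue entirely: since $f$ is affine, both $f$ and $-f$ are midpoint convex and continuous, so Proposition 4.11 applied to each yields $f(E(X|\G))\leqslant E(f(X)|\G)$ and $-f(E(X|\G))\leqslant E(-f(X)|\G)$ a.s., hence equality; Theorem 4.4(i) (via Corollary 4.5) gives $f(X)\in L^1_\R$, and $f(X)=f(KX)$ pointwise by the remark preceding Corollary 4.5, which supplies the second equality. I recommend replacing your embedding argument for part 2 by this two-sided application of the conditional Jensen inequality.
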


Similar to Banach space, the notion of martingale in CC space can be defined as follows: Let $\{X_n, n\geqslant 1\}\subset L_\X^1$ and $\{\F_n, n\geqslant 1\}$ be an increasing sequence of sub-$\sigma$-algebras of $\F$. The collection $\{X_n, \F_n, n\geqslant 1\}$ is said to be \textit{martingale} if $X_n$ is $\F_n$-measurable and $E(X_{n+1}|\F_n)=X_n$ a.s. for all $n\geqslant 1$. Thanks to Corollary 4.12(1), it is easy to verify that if $\{X_n, \F_n, n\geqslant 1\}$ is a martingale then $\{\|X_n\|^p_a, \F_n, n\geqslant 1\}$ is a real-valued submartingale for $a\in \X$, $p\geqslant 1$ arbitrarily. The convergence of martingales will be established in proposition below.
\begin{prop}
	(i) Let $\{\mathcal F_{n}, n\geqslant 1\}$ be an increasing sequence of sub-$\sigma$-algebras of $\F$ and let $\F_{\infty}=\sigma(\cup_{n\geqslant 1}\F_{n})$. If $X\in L_\X^p$ with some $p\geqslant 1$, then $E(X|\F_{n})\to E(X|\F_{\infty})$ a.s. and in $L_\X^p$ as $n\to \infty$.
	
(ii) Let $\{\mathcal F_{-n}, n\geqslant 1\}$ be a decreasing sequence of sub-$\sigma$-algebras of $\F$ and let $\F_{-\infty}=\cap_{n\geqslant 1}\F_{-n}$. If $X\in L_\X^p$ with some $p\geqslant 1$, then $E(X|\F_{-n})\to E(X|\F_{-\infty})$ a.s. and in $L_\X^p$ as $n\to \infty$.
\end{prop}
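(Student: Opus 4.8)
The plan is to transfer both statements into the Banach space $\mathbb E$ through the isometric embedding $j$ of Theorem 3.3, apply the classical L\'evy martingale convergence theorems for Bochner-integrable random elements, and pull the conclusions back. Throughout I keep the notation $\mathbb E$, $\mathbb F=j(K(\X))$ and $j$ of Theorem 3.3, and fix some $a\in K(\X)$ (so that $Ka=a$).

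First I would record the two ingredients that make the transfer work. Since $j\circ K_\X$ is continuous and $\mathbb E$ is separable, $j(KX)$ is a strongly measurable $\mathbb E$-valued random element; and by Theorem 3.3(iii) together with property (2.6),
\[
\|j(KX)\|\leqslant \|j(KX)-j(a)\|+\|j(a)\|=d(KX,Ka)+\|j(a)\|\leqslant d(X,a)+\|j(a)\|,
\]
so $X\in L_\X^p$ forces $j(KX)\in L_{\mathbb E}^p$ (Bochner $p$-integrable). Next, Theorem 4.8 applied to the sub-$\sigma$-algebras $\F_n$ and $\F_\infty$ (resp. to $\F_{-n}$ and $\F_{-\infty}$) gives
\[
j(E(X|\F_n))=E(j(KX)|\F_n)\ \text{ and }\ j(E(X|\F_\infty))=E(j(KX)|\F_\infty)\quad\text{a.s.}
\]
(resp. the analogues with $-n$ and $-\infty$). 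In particular the limiting objects $E(j(KX)|\F_\infty)$ and $E(j(KX)|\F_{-\infty})$ take values in $\mathbb F$ a.s., so passing back through $j$ is legitimate, and the bound above also shows each $E(X|\F_n)\in L^p_{K(\X)}$, so the $\Delta_p$-distances below make sense.

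For (i): L\'evy's upward theorem, valid for Bochner-integrable functions in an arbitrary Banach space, gives that the closed martingale $\{E(j(KX)|\F_n)\}_{n\geqslant1}$ converges to $E(j(KX)|\F_\infty)$ a.s. and in $L_{\mathbb E}^1$; since $\|E(j(KX)|\F_n)\|^p\leqslant E(\|j(KX)\|^p\,|\,\F_n)$ and the latter is a uniformly integrable real sequence, the quantity $\|E(j(KX)|\F_n)-E(j(KX)|\F_\infty)\|^p$ is dominated by a uniformly integrable sequence and tends to $0$ a.s., hence also in $L^1_\R$, which upgrades the convergence to $L_{\mathbb E}^p$. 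Because $j$ is an isometry,
\[
d\big(E(X|\F_n),E(X|\F_\infty)\big)=\big\|E(j(KX)|\F_n)-E(j(KX)|\F_\infty)\big\|\longrightarrow 0\quad\text{a.s.}
\]
and
\[
\Delta_p\big(E(X|\F_n),E(X|\F_\infty)\big)^p=E\big\|E(j(KX)|\F_n)-E(j(KX)|\F_\infty)\big\|^p\longrightarrow 0,
\]
which is (i). For (ii) I would run the identical argument with L\'evy's downward (reverse-martingale) convergence theorem in place of the upward one; it is equally valid for Bochner-integrable functions in any Banach space, and the same isometry identities yield $d(E(X|\F_{-n}),E(X|\F_{-\infty}))\to 0$ a.s. and $\Delta_p(E(X|\F_{-n}),E(X|\F_{-\infty}))\to 0$.

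The only genuine obstacle is invoking the right vector-valued convergence theorems: although an $L^1$-bounded martingale need not converge a.s. in a general Banach space, a \emph{closed} martingale of the form $E(Z|\F_n)$ (and, dually, a reverse martingale) does converge a.s. in every Banach space, and that is precisely the situation here since our martingale is $E(j(KX)|\F_n)$; this point should be cited carefully. Everything else — strong measurability from separability of $\mathbb E$, the moment bound coming from non-expansiveness of $K$, the identification of conditional expectations via Theorem 4.8, and the transfer of the two modes of convergence through the isometry $j$ — is routine bookkeeping.
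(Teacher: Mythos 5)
Your proposal is correct and follows essentially the same route as the paper: embed via $j$, identify $j(E(X|\cdot))$ with the Banach-valued conditional expectations $E(j(KX)|\cdot)$ through Theorem 4.8, invoke the convergence theorems for closed (resp. reverse) Banach-space-valued martingales, and pull back through the isometry. You simply spell out details the paper leaves to the citation of Pisier — the Bochner $p$-integrability bound, the validity of the closed/reverse martingale theorems without RNP, and the uniform-integrability upgrade to $L^p$ — all of which are sound.
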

\begin{proof} With the hypothesis in (i) and (ii), $\{E(X_n|\F_n), \F_n, n\geqslant 1\}$ is a martingale and $\{E(X_n|\F_{-n}), \F_{-n}, n\geqslant 1\}$ is an inverse martingale respectively. Combining convergence theorems for  Banach space-valued martingales (e.g., see Pisier \cite{Pi}, Theorem 1.5 and Theorem 1.14 for conclusion (i); Theorem in \cite{Pi}, Ch.I, Section 1.5 for conclusion (ii)) with the embedding Theorem 3.3, we obtain immediately the proof.
\end{proof}

The last result in this section, we will establish a version of Birkhoff's ergodic theorem in CC space. Let $\tau: \Omega \to \Omega$ be an $\F$-measurable transformation. A transformation $\tau$ is a \textit{measure-preserving}
 or, equivalently, $P$ is said to be $\tau$-\emph{invariant measure}, if $P(\tau^{-1}(A))=P(A)$
 for all $A\in \F$. A
set $A\in \F$  satisfying $\tau^{-1}(A)=A$ is said to be $\tau$-\emph{invariant set} and the family of all $\tau$-invariant sets will constitute a sub-$\sigma$-algebra $\mathcal I_\tau$ of $\F$ . We say that
$\tau$ is an \emph{ergodic} if $\mathcal I_\tau$ is trivial, i.e., $P(A)=0$ or $P(A)=1$ whenever $A\in \mathcal I_\tau$. 
\begin{theo}
Let $\tau$
be a measure-preserving
transformation of the probability space $(\Omega, \F, P)$ and $\mathcal I_\tau$ be the $\sigma$-algebra of invariant events with respect to $\tau$. If $X\in L_\X^1$, then $[n^{-1}, X_\circ\tau^i]_{i=0}^{n-1} \to E(X|\mathcal I_\tau)$ a.s. as $n \to \infty$.
\end{theo}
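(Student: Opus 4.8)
The plan is to transport the statement to the Banach space $\mathbb E$ of Theorem 3.3 by way of the isometric, affine embedding $j$ of $K(\X)$, apply the Birkhoff ergodic theorem for Bochner-integrable functions there, and then return to $\X$. Write $Y_n:=[n^{-1}, X\circ\tau^i]_{i=0}^{n-1}$. First I would record the reductions that are immediate from earlier results: $KX\in L^1_{K(\X)}$ and $j(KX)\in L^1_{\mathbb E}$ (as in the proof of Theorem 4.1, since $E\|j(KX)\|=Ed(\theta, KX)\leqslant Ed(\theta, X)<\infty$), and, using linearity of $K$ (property (2.3)) together with the affinity of $j$ extended to $n$-fold combinations inside $K(\X)$,
\begin{align*}
j(KY_n)=j\big([n^{-1}, KX\circ\tau^i]_{i=0}^{n-1}\big)=\frac1n\sum_{i=0}^{n-1} j(KX)\circ\tau^i .
\end{align*}
Because $\tau$ is measure-preserving and $j(KX)$ is Bochner-integrable, the vector-valued Birkhoff ergodic theorem (e.g.\ Krengel, \emph{Ergodic Theorems}) gives $\frac1n\sum_{i=0}^{n-1} j(KX)\circ\tau^i\to E(j(KX)\mid\mathcal I_\tau)$ a.s.\ in $\mathbb E$, and by Theorem 4.8 the limit equals $j(E(X\mid\mathcal I_\tau))$. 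Since $j$ is isometric on $K(\X)$ and $KY_n, E(X\mid\mathcal I_\tau)\in K(\X)$, this yields
\begin{align*}
d\big(KY_n, E(X\mid\mathcal I_\tau)\big)=\big\|j(KY_n)-j(E(X\mid\mathcal I_\tau))\big\|\longrightarrow 0\quad\text{a.s.}
\end{align*}

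It then remains to pass from $KY_n$ to $Y_n$, i.e.\ to prove $d(Y_n, KY_n)\to 0$ a.s. This is the step I expect to be the real obstacle: the crude estimate $d(Y_n, KY_n)=d\big([n^{-1}, X\circ\tau^i]_i,[n^{-1}, KX\circ\tau^i]_i\big)\leqslant n^{-1}\sum_{i=0}^{n-1} d(X\circ\tau^i, KX\circ\tau^i)$ coming from (CC.iv) does \emph{not} vanish in general, since by the scalar ergodic theorem its right-hand side tends to $E(d(X,KX)\mid\mathcal I_\tau)$, which is typically positive. What is needed is the sharper ``approximate convexification under averaging'' phenomenon: for any sequence $(w_i)$ in $\X$ with $n^{-1}\max_{0\leqslant i<n} d(u_0, w_i)\to 0$, one has $d\big([n^{-1}, w_i]_{i=0}^{n-1},[n^{-1}, Kw_i]_{i=0}^{n-1}\big)\to 0$ --- a Shapley--Folkman-type bound, which for CC spaces is present in the law-of-large-numbers machinery of Ter\'an and Molchanov \cite{TM}. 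I would apply it with $w_i=X\circ\tau^i$ after checking its hypothesis holds a.s.: applying the scalar Birkhoff theorem to $d(u_0, X)\in L^1_\R$ shows $n^{-1}\sum_{i=0}^{n-1} d(u_0, X)\circ\tau^i$ converges a.s., hence $n^{-1}\,d(u_0, X)\circ\tau^{n-1}\to 0$ a.s.; equivalently (or directly, via the first Borel--Cantelli lemma applied to the stationary sequence $d(u_0, X)\circ\tau^i$, using $Ed(u_0,X)<\infty$) $n^{-1}\max_{0\leqslant i<n} d(u_0, X)\circ\tau^i\to 0$ a.s.

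Combining the two displays with the triangle inequality, $d(Y_n, E(X\mid\mathcal I_\tau))\leqslant d(Y_n, KY_n)+d(KY_n, E(X\mid\mathcal I_\tau))\to 0$ a.s., which is the assertion. I would also note that the weaker statement for $K(\X)$-valued random elements (replacing $Y_n$ by $KY_n$) needs only the first part of the argument, so that the entire difficulty is concentrated in the convexification lemma; and that, if wanted, $L^p$ convergence for $X\in L^p_\X$ follows by the same route from the mean ergodic theorem in $\mathbb E$ together with Corollary 4.12.
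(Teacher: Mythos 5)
Your first step is sound and coincides with part of the paper's argument: embedding via $j$, applying the Banach-valued Birkhoff theorem to $j(KX)$, and identifying the limit through Theorem 4.8 does give $d([n^{-1}, KX\circ\tau^i]_{i=0}^{n-1}, E(X|\mathcal I_\tau))\to 0$ a.s. You are also right to flag the passage from $KY_n$ to $Y_n$ as the real issue, and right that the crude bound $d(Y_n,KY_n)\leqslant n^{-1}\sum_i d(X\circ\tau^i, KX\circ\tau^i)$ does not vanish. The problem is the lemma you invoke to close this gap.

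The ``Shapley--Folkman-type bound'' you state --- that $n^{-1}\max_{0\leqslant i<n}d(u_0,w_i)\to0$ forces $d([n^{-1},w_i]_{i},[n^{-1},Kw_i]_{i})\to0$ --- is false in a general CC space, and it is not part of the Ter\'an--Molchanov machinery. Counterexample: take $\X=k(H)$ for an infinite-dimensional separable Hilbert space $H$ with orthonormal basis $(e_i)$, with Minkowski convex combinations and the Hausdorff metric, and let $w_i=\{0,\sqrt{i}\,e_i\}$, so that $Kw_i$ is the segment joining $0$ and $\sqrt{i}\,e_i$. Then $n^{-1}\max_{1\leqslant i\leqslant n}d(\{0\},w_i)=n^{-1/2}\to0$, yet the point $\frac{1}{2n}\sum_{i=1}^n\sqrt{i}\,e_i$ of $[n^{-1},Kw_i]_{i=1}^n$ lies, by orthogonality, at distance exactly $\frac{1}{2n}\big(\sum_{i=1}^n i\big)^{1/2}\to 1/(2\sqrt{2})$ from every point of $[n^{-1},w_i]_{i=1}^n$; the Shapley--Folkman constant is dimension-dependent and degenerates in infinite dimensions. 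The hypothesis that actually makes the convexification-under-averaging step work is that the $w_i$ range over a fixed compact set (Proposition 5.5 of the paper), and to use it one must first truncate: by tightness of $P_X$ choose compacts $\mathcal K_m$ with $E(d(X,u)I(X\notin\mathcal K_m))<1/m$, replace $X\circ\tau^i$ by its truncation $Y_{m,i}$ to $\mathcal K_m\cup\{u\}$, control the two truncation errors by the scalar Birkhoff theorem applied to $d(X,u)I(X\notin\mathcal K_m)$, apply Proposition 5.5 to the $Y_{m,i}$, and finally let $m\to\infty$. That four-term decomposition is the paper's proof; your outline is missing exactly this truncation, and without it the argument does not close.
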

\begin{proof}
	Recall that in Th\'eor\`eme 3.1 in \cite{Fi}, Raynaud de Fitte proved a version of ergodic theorem in metric space by using the technique of approximation by discrete range random elements. To prove our result, we will present here another technique via using the embedding theorem. 
Since $X$ is integrable, Theorem 3.2 in \cite{Pa} implies that for each natural number $m$, there exists a compact subset $\mathcal K_{u,m}=\mathcal K_m$ of $\X$ such that $E(d(X, u)I(X\notin \mathcal K_m))<1/m$ and without loss of generality, we can assume that $\mathcal K_m\subset \mathcal K_{m+1}$ for all $m$. For each $n, m\geqslant 1$, defining  $Y_{m,n-1}=X_\circ \tau^{n-1}$ if $X_\circ \tau^{n-1} \in \mathcal K_m$ and $Y_{m,n-1}=u$ if $X_\circ \tau^{n-1} \notin \mathcal K_m$, we have
\begin{align*}
d([n^{-1}, X_\circ\tau^i]_{i=0}^{n-1}, E(X|\mathcal I_\tau))\leqslant & d([n^{-1}, X_\circ\tau^i]_{i=0}^{n-1}, [n^{-1}, Y_{m,i}]_{i=0}^{n-1})+ d([n^{-1}, Y_{m,i}]_{i=0}^{n-1}, [n^{-1}, KY_{m,i}]_{i=0}^{n-1})\\
&+d([n^{-1}, KY_{m,i}]_{i=0}^{n-1}, [n^{-1}, KX_\circ\tau^i]_{i=0}^{n-1})+d([n^{-1}, KX_\circ\tau^i]_{i=0}^{n-1}, E(X|\mathcal I_\tau)) \tag{4.2}.
\end{align*}
We will estimate four parts in RHS of inequality (4.2) as follows. First, since $\mathcal K_m \cup \{u\}$ is compact and $Y_{m,n} \in \mathcal K_m \cup \{u\}$ for each $m$, Proposition 5.5 (will be given in next section) follows $d([n^{-1}, Y_{m,i}]_{i=0}^{n-1}, [n^{-1}, KY_{m,i}]_{i=0}^{n-1}) \to 0$ as $n\to \infty$. Second, according to properties (2.3), (2.6) and the definition of $Y_{m,n}$, we obtain 
\begin{align*}
&d([n^{-1}, KY_{m,i}]_{i=0}^{n-1}, [n^{-1}, KX_\circ\tau^i]_{i=0}^{n-1})\leqslant d([n^{-1}, Y_{m,i}]_{i=0}^{n-1}, [n^{-1}, X_\circ\tau^i]_{i=0}^{n-1})\\
&\leqslant n^{-1}\sum_{i=0}^{n-1} d(Y_{m,i}, X_\circ\tau^i)= n^{-1}\sum_{i=0}^{n-1} d(X_\circ\tau^i, u)I(X_\circ\tau^i \notin \mathcal K_m)= n^{-1}\sum_{i=0}^{n-1} (d(X, u)I(X\notin \mathcal K_m))_\circ\tau^i.
\end{align*}
For each $m$, applying the classic Birkhoff ergodic theorem for  real-valued random variable $d(X, u)I(X\notin \mathcal K_m)$, we derive 
$$n^{-1}\sum_{i=0}^{n-1} (d(X, u)I(X\notin \mathcal K_m))_\circ\tau^i \to E(d(X, u)I(X \notin \mathcal K_m)|\mathcal I_\tau)\; \mbox{ a.s. as }n \to \infty.$$ 
Next, applying Theorem 3.3
$$d([n^{-1}, KX_\circ\tau^i]_{i=0}^{n-1}, E(X|\mathcal I_\tau))=\Big\|n^{-1}
\sum_{i=0}^{n-1} j(KX_\circ\tau^i) -j(E(X|\mathcal I_\tau))\Big\|=\Big\|n^{-1}
\sum_{i=0}^{n-1} j(KX)_\circ\tau^i -E(j(KX)|\mathcal I_\tau)\Big\|\to 0$$ a.s.
as $n\to \infty$, where the convergence comes from Birkhoff's ergodic theorem for  Banach-valued random element $j(KX)$ (\cite{Pa}, Ch.VI, Theorem 9.4). Combining above arguments, we obtain
\begin{align*}
\limsup_{n\to \infty} d([n^{-1}, X_\circ\tau^i]_{i=0}^{n-1}, E(X|\mathcal I_\tau))  \leqslant 2E(d(X, u)I(X \notin \mathcal K_m)|\mathcal I_\tau) \;\mbox{ a.s. for all } m.
\end{align*}
Finally, to get the conclusion of theorem,  it is sufficient to prove that $E(d(X, u)I(X \notin \mathcal K_m)|\mathcal I_\tau) \to 0$ a.s. as $m \to \infty$. Observe that $\{E(d(X, u)I(X \notin \mathcal K_m)|\mathcal I_\tau), m\geqslant 1\}$ is a non-increasing sequence, so the almost surely convergence is equivalent to the convergence in probability. For $\varepsilon >0$ arbitrarily,
\begin{align*}
P(|E(d(X, u)I(X \notin \mathcal K_m)|\mathcal I_\tau)| >\varepsilon)\leqslant \varepsilon^{-1} E(d(X, u)I(X \notin \mathcal K_m))\leqslant \varepsilon^{-1}  m^{-1} \to 0\;\mbox{ as } m \to \infty, 
\end{align*}
and this completes the proof of theorem.
\end{proof}

\section{Miscellaneous applications and remarks}

\begin{prop}
	If $\X$ is a complete CC space, then $K_{k(\X)}(k(\X))=ck(K_\X(\X))$. So the CC space $(ck(K_\X(\X)), D_\X)$ can be embedded isometrically into a Banach space such that convex combination structure is preserved.
\end{prop}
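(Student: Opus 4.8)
The plan is to first establish the purely set-theoretic identity $K_{k(\X)}(k(\X))=ck(K_\X(\X))$ and then to deduce the embedding statement by applying Theorem 3.3 to the hyperspace $(k(\X),D_\X)$ in place of $\X$. Recall the formula $K_{k(\X)}A=\overline{co}\,K_\X(A)=\overline{co}\{K_\X u:u\in A\}$ and that, by Theorem 6.2 of \cite{TM}, $(k(\X),D_\X)$ equipped with the indicated convex combination is again a separable complete CC space whose convexification operator is $K_{k(\X)}$.

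For the inclusion ``$\subseteq$'' I would start from $B=K_{k(\X)}A$ with $A\in k(\X)$. Since $K_\X$ is non-expansive by (2.6) it is continuous, so $K_\X(A)$ is a compact subset of the convexifiable domain $K_\X(\X)$. The set $K_\X(\X)$ is closed (Proposition 3.7 of \cite{TM}, as already used for Proposition 2.1) and convex — indeed if $x_i\in K_\X(\X)$ then $Kx_i=x_i$, so by (2.3) $[\lambda_i,x_i]_{i=1}^n=K([\lambda_i,x_i]_{i=1}^n)=[\lambda_i,Kx_i]_{i=1}^n\in K_\X(\X)$ — hence $\overline{co}\,K_\X(A)\subseteq K_\X(\X)$ and $B$ is closed and convex. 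What still has to be checked is that $B$ is compact, and this is where I expect the only real difficulty: in a general metric space the closed convex hull of a compact set need not be compact. I would circumvent this by transporting the problem into the Banach space supplied by Theorem 3.3. Letting $j:K_\X(\X)\to\mathbb E$ with $\mathbb F=j(K_\X(\X))$ be that embedding, $j$ is an affine isometry onto the closed convex set $\mathbb F\subseteq\mathbb E$; therefore $j(K_\X(A))$ is compact, $j(co\,K_\X(A))$ is its linear convex hull, and — because $j$ is a homeomorphism onto $\mathbb F$ and $\mathbb F$ is closed in $\mathbb E$ — $j(B)$ equals the closure in $\mathbb E$ of that convex hull. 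Mazur's theorem then gives that $j(B)$ is compact, hence so is $B$, and thus $B\in ck(K_\X(\X))$. For ``$\supseteq$'', given a nonempty convex compact $C\subseteq K_\X(\X)$ I would simply take $A=C\in k(\X)$: since $K_\X u=u$ for every $u\in C$ and $C$ is convex and (being compact) closed, $K_{k(\X)}C=\overline{co}\,C=C$, so $C\in K_{k(\X)}(k(\X))$. This completes the set equality.

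Finally, for the embedding assertion I would apply Proposition 2.1 to $(k(\X),D_\X)$: its convexifiable domain $K_{k(\X)}(k(\X))$ is then a separable, complete, convexifiable CC space, so Theorem 3.3 embeds it isometrically, and with convex combinations preserved, as a closed convex subset of a separable Banach space. By the identity just proved this convexifiable domain equals $ck(K_\X(\X))$, which is the claim. The only nontrivial point in the whole argument is the compactness of the closed convex hull in the inclusion ``$\subseteq$''; everything else is a direct use of (2.3), (2.6), Theorem 3.3 and the cited hyperspace facts from \cite{TM}.
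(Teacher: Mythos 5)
Your proof is correct and follows essentially the same route as the paper: both inclusions are handled exactly as in the paper's argument (continuity of $K_\X$, convexity and closedness of $K_\X(\X)$, and $K_{k(\X)}C=\overline{co}\,C=C$ for convex compact $C\subseteq K_\X(\X)$), and the embedding claim is then obtained, as in the paper, by applying Proposition 2.1 and Theorem 3.3 to the hyperspace CC structure. The only divergence is your detour through the embedding $j$ and Mazur's theorem to get compactness of $\overline{co}\,K_\X(A)$; that check is valid and self-contained, but it is already automatic because $K_{k(\X)}B$ is by definition the convexification of $B$ in the CC space $(k(\X),D_\X)$ of Theorem 6.2 in \cite{TM}, hence an element of $k(\X)$ and so compact.
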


\begin{proof}
	For $A\in K_{k(\X)}(k(\X))$,  there exists $B\in k(\X)$ such that $A=K_{k(\X)}B=\overline{co}K_\X(B)$. It follows from the continuity of $K_\X$ that $K_\X(B)\in k(K_\X(\X))$, so $\overline{co}K_\X(B)$ is a compact and convex subset of $K_\X(\X)$. It means $A=\overline{co}K_\X(B) \in ck(K_\X(\X))$, thus $K_{k(\X)}(k(\X))\subset ck(K_\X(\X))$. The inverse  implication is easy to obtain thanks to the observation that $A=\overline{co}K_\X(A)=K_{k(\X)}A$ for $A\in ck(K_\X(\X))$.
\end{proof}

Lemma 3.3 in \cite{QT} established an inequality in CC space and it is a useful tool to obtain many limit theorems (see \cite{QT, TQN}). Now by applying Theorem 3.3, this lemma may be proved more easily as follows:

\begin{prop}
\emph{(\cite{QT}, Lemma 3.3)} Let $\{a_i, b_i, 1\leqslant i\leqslant n\} \subset [0, 1]$ be a collection of nonnegative constants with $\sum_{i=1}^n a_i=\sum_{i=1}^n b_i=1$. Then $d([a_i, Kx_i]_{i=1}^n, [b_i, Kx_i]_{i=1}^n)\leqslant \sum_{i=1}^n|a_i-b_i| d(x_i, u),$
where $x_1,\ldots, x_n, u \in \X$ are arbitrary.
\end{prop}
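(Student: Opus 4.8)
The plan is to push the inequality forward through the embedding $j$ of Theorem 3.3 into a Banach space and then reduce it to the non-expansiveness of $K$. Since $\X$ is separable and complete, $K(\X)$ is a separable, complete, convexifiable CC space (Proposition 2.1), so Theorem 3.3 provides an isometric, convex-combination-preserving embedding $j\colon K(\X)\to\mathbb E$ into a Banach space $\mathbb E$. By idempotence of $K$ (property (2.4)) each $Kx_i$ and $Ku$ lies in $K(\X)$, and by the weight-extension convention of Remark 1 the combinations $[a_i,Kx_i]_{i=1}^n$ and $[b_i,Kx_i]_{i=1}^n$ are well-defined even when some of the $a_i$, $b_i$ vanish. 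Decomposing an $n$-ary combination into nested binary ones via (CC.ii) and applying property (ii) of Theorem 3.3 repeatedly (exactly as in the proof of Theorem 4.1), I obtain the $n$-ary affinity
\[
j\big([a_i,Kx_i]_{i=1}^n\big)=\sum_{i=1}^n a_i\,j(Kx_i),\qquad j\big([b_i,Kx_i]_{i=1}^n\big)=\sum_{i=1}^n b_i\,j(Kx_i).
\]

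Next, using that $j$ is an isometry (property (iii) of Theorem 3.3),
\[
d\big([a_i,Kx_i]_{i=1}^n,[b_i,Kx_i]_{i=1}^n\big)=\Big\|\sum_{i=1}^n(a_i-b_i)\,j(Kx_i)\Big\|.
\]
Since $\sum_{i=1}^n(a_i-b_i)=0$, for the arbitrary point $u\in\X$ the vector $\big(\sum_{i=1}^n(a_i-b_i)\big)j(Ku)$ equals $0$ in $\mathbb E$, so I may rewrite the right-hand side as $\big\|\sum_{i=1}^n(a_i-b_i)\big(j(Kx_i)-j(Ku)\big)\big\|$ and estimate by the triangle inequality in $\mathbb E$ together with the isometry of $j$:
\[
d\big([a_i,Kx_i]_{i=1}^n,[b_i,Kx_i]_{i=1}^n\big)\leqslant\sum_{i=1}^n|a_i-b_i|\,\big\|j(Kx_i)-j(Ku)\big\|=\sum_{i=1}^n|a_i-b_i|\,d(Kx_i,Ku).
\]
Finally, applying property (2.6) — $K$ is non-expansive, hence $d(Kx_i,Ku)\leqslant d(x_i,u)$ — yields $d\big([a_i,Kx_i]_{i=1}^n,[b_i,Kx_i]_{i=1}^n\big)\leqslant\sum_{i=1}^n|a_i-b_i|\,d(x_i,u)$, which is the assertion.

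There is essentially no serious obstacle: the content of the proof is the ``subtract a zero vector, then apply the triangle inequality'' trick, which is available precisely because $j$ linearizes the convex combination and $\sum_{i=1}^n(a_i-b_i)=0$. The only point deserving a moment of care is the $n$-ary affinity of $j$ when some weights equal $0$, which is legitimate thanks to the extension of admissible weights from $(0;1)$ to $[0;1]$ on $K(\X)$ recorded in Remark 1; everything else is a routine norm estimate in $\mathbb E$.
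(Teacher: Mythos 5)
Your proof is correct and is essentially identical to the paper's: both push the combination through the embedding $j$ of Theorem 3.3, subtract the zero vector $\big(\sum_i(a_i-b_i)\big)j(Ku)$, apply the triangle inequality in $\mathbb E$, and finish with the non-expansiveness of $K$ from property (2.6). Your extra remark on handling vanishing weights via Remark 1 is a sensible precaution the paper leaves implicit.
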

\begin{proof} With the notations as in Theorem 3.3, we have
\begin{align*}
d([a_i, Kx_i]_{i=1}^n, [b_i, Kx_i]_{i=1}^n)&=\Big\|\sum_{i=1}^n a_i j(Kx_i)-\sum_{i=1}^n b_i j(Kx_i)\Big\|=\Big\|\sum_{i=1}^n(a_i-b_i)j(Kx_i)\Big\|\\
&=\Big\|\sum_{i=1}^n(a_i-b_i)(j(Kx_i)-j(Ku))\Big\|\leqslant \sum_{i=1}^n|a_i-b_i|\|j(Kx_i)-j(Ku)\|\\
&=\sum_{i=1}^n|a_i-b_i|d(Kx_i, Ku)\leqslant \sum_{i=1}^n|a_i-b_i|d(x_i, u),
\end{align*}
where the last estimation follows from property (2.6).
\end{proof}

\noindent\textbf{Remark 4.} The inequality 
\begin{align*}
d([a_i, x_i]_{i=1}^n, [b_i, x_i]_{i=1}^n)\leqslant \sum_{i=1}^n|a_i-b_i| d(x_i, u)\tag{5.1}
\end{align*}
does not hold  in general for $x_1,\ldots, x_n \in \X$. It will be shown via the following example:

\noindent \textbf{Example 1.} Let $(\X, \|.\|)$ be a Banach space and we consider the  operator $^2[\lambda_i, x_i]_{i=1}^n =\sum_{i=1}^n \lambda_i^2 x_i$. As shown in Example 5 of \cite{TM}, $(\X, \|.\|,\,^2[.,.])$ is a CC space. For $0 \neq x, y \in \X$, we have
\begin{align*}
d\big(\,^2[4/5, x ; 1/5, y], \, ^2[2/5, x ; 3/5, y]\big)=\|(16x/25+y/25)-(4x/25 + 9y/25)\|=\|12x/25-8y/25\|.
\end{align*}
Choosing $y=-x/2$, we get $\|12x/25-8y/25\|=16\|x\|/25$. On the other hand, $|4/5-2/5|.\|x\|+|1/5-3/5|.\|y\|=3\|x\|/5<16\|x\|/25$, so (5.1) fails with $u=0$.

The result below is the Etemadi SLLN in CC space and it was proved in \cite{TM} via approximation method by simple random elements. However, a different proof can be obtained by combining Etemadi's SLLN in Banach space (\cite{Et}, Remark 2) with embedding Theorem 3.3 and using simultaneously the same scheme as in proof of Theorem 4.14.

\begin{prop} \emph{(\cite{TM}, Theorem 5.1)} Let $\{X, X_n, n\geqslant 1\}$ be a sequence of pairwise i.i.d. $\X$-valued random  elements. Then, $[n^{-1}, X_i]_{i=1}^n \to EX$ a.s. as $n\to \infty$.
\end{prop}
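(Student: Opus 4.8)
The plan is to flesh out the remark preceding the statement: transfer Etemadi's strong law from the Banach space $\mathbb E$ back to $\X$ by exactly the truncation device used in the proof of Theorem 4.14. The structural obstacle is that the embedding $j$ is only defined on $K(\X)$, not on all of $\X$, so $[n^{-1},X_i]_{i=1}^n$ is not literally $j^{-1}$ of a Banach-space average; the truncation is what absorbs the discrepancy between $[n^{-1},X_i]_{i=1}^n$ and $[n^{-1},KX_i]_{i=1}^n$.

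First I would record the reductions. Since $K$ is continuous, hence Borel measurable, $\{KX_i\}$ and $\{j(KX_i)\}$ are again pairwise i.i.d., with values in $K(\X)$ and $\mathbb E$ respectively, and $E\|j(KX_i)\|=Ed(\theta,KX_i)\leqslant Ed(\theta,X_i)<\infty$, so $j(KX)\in L^1_{\mathbb E}$. By Lemma 3.3 in \cite{Te}, $EX=E(KX)\in K(\X)$, and by Theorem 4.1, $j(EX)=Ej(KX)$. Using the affine property (ii) of Theorem 3.3 extended to $n$ summands via (CC.ii), we get $d([n^{-1},KX_i]_{i=1}^n,EX)=\big\|n^{-1}\sum_{i=1}^n j(KX_i)-j(EX)\big\|$, and Etemadi's SLLN in $\mathbb E$ (\cite{Et}, Remark 2) applied to $\{j(KX_i)\}$ yields $n^{-1}\sum_{i=1}^n j(KX_i)\to Ej(KX)=j(EX)$ a.s.; hence $d([n^{-1},KX_i]_{i=1}^n,EX)\to 0$ a.s.

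Then comes the truncation. Fix $u\in\X$. Exactly as in Theorem 4.14, for each $m$ choose via Theorem 3.2 in \cite{Pa} a compact $\mathcal K_m\subset\X$ with $E(d(X,u)I(X\notin\mathcal K_m))<1/m$, which we may take increasing, and set $Y_{m,i}=X_i$ on $\{X_i\in\mathcal K_m\}$, $Y_{m,i}=u$ otherwise. By the triangle inequality,
\begin{align*}
d([n^{-1},X_i]_{i=1}^n,EX)&\leqslant d([n^{-1},X_i]_{i=1}^n,[n^{-1},Y_{m,i}]_{i=1}^n)+d([n^{-1},Y_{m,i}]_{i=1}^n,[n^{-1},KY_{m,i}]_{i=1}^n)\\
&\quad+d([n^{-1},KY_{m,i}]_{i=1}^n,[n^{-1},KX_i]_{i=1}^n)+d([n^{-1},KX_i]_{i=1}^n,EX).
\end{align*}
The last term vanishes a.s. by the previous step. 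The second term vanishes a.s. by Proposition 5.5, since $Y_{m,i}\in\mathcal K_m\cup\{u\}$ and this set is compact. For the first and third terms, (CC.iv) extended to $n$ terms together with property (2.6) gives the common bound $n^{-1}\sum_{i=1}^n d(X_i,u)I(X_i\notin\mathcal K_m)$; as $\{d(X_i,u)I(X_i\notin\mathcal K_m)\}_i$ is a pairwise i.i.d. sequence of integrable nonnegative real random variables, the classical Etemadi SLLN gives $n^{-1}\sum_{i=1}^n d(X_i,u)I(X_i\notin\mathcal K_m)\to E(d(X,u)I(X\notin\mathcal K_m))<1/m$ a.s.

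Combining the four estimates, $\limsup_n d([n^{-1},X_i]_{i=1}^n,EX)\leqslant 2E(d(X,u)I(X\notin\mathcal K_m))<2/m$ a.s. for every $m$; letting $m\to\infty$ (over a countable family of full-measure events) yields $d([n^{-1},X_i]_{i=1}^n,EX)\to 0$ a.s. The main obstacle is simply the bookkeeping of the truncation — arranging that Proposition 5.5 applies to the compactly supported $Y_{m,i}$ while the residual averages are handled by the real-valued Etemadi SLLN — which is already packaged in the proof of Theorem 4.14, so little new work is required here.
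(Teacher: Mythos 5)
Your argument is correct and is exactly the route the paper itself indicates for this proposition: combine Etemadi's SLLN in the Banach space $\mathbb E$ (via the embedding of Theorem 3.3 applied to $j(KX_i)$, using $EX=E(KX)$ and Theorem 4.1) with the truncation scheme of Theorem 4.14 and Proposition 5.5, the paper giving only this one-sentence sketch rather than a written-out proof. Your expansion of the sketch is sound — indeed the final step is slightly simpler than in Theorem 4.14, since the residual averages converge to the constant $E(d(X,u)I(X\notin\mathcal K_m))<1/m$ rather than to a conditional expectation.
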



The following proposition will present a special form of Jensen's equality and it plays an important role in establishing general case. This inequality can be prove easily by combining Theorem 3.3 and a corresponding version in Banach space, moreover it was proved directly by Ter\'{a}n (\cite{Te}, Lemma 3.2.). However, in proof of this inequality below, we will give another direct manner which seems to be more simple than the one of Ter\'{a}n \cite{Te}.

\begin{prop}
Let $\varphi: \X\to \R$ be a midpoint convex function and $\{x_i\}_{i=1}^n \subset K(\X)$ be a sequence of convex points of $\X$. If $\{q_i\}_{i=1}^n$ is a sequence of positive rational numbers with $\sum_{i=1}^n q_i =1$, then
$\varphi([q_i, x_i]_{i=1}^n)\leqslant \sum_{i=1}^n q_i \varphi(x_i).$
Furthermore, if $\varphi$ is lower semicontinuous then $
\varphi([r_i, x_i]_{i=1}^n)\leqslant \sum_{i=1}^n r_i \varphi(x_i)$, where $r_i> 0$, $\sum_{i=1}^n r_i =1$.
\end{prop}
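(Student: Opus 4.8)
The plan is to first reduce the rational-weight inequality to the special case in which the weights are all equal and the points form a finite family of convex points; then to establish that special case by the classical Cauchy ``forward--backward'' induction, carried out inside $\X$ with the convex combination operation playing the role of finite sums; and finally to pass to arbitrary positive real weights via rational approximation, using the joint continuity of $[.,.]$ and the lower semicontinuity of $\varphi$. Note that the first (rational) part will not use lower semicontinuity at all.

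For the reduction, write $q_i=p_i/N$ with $p_i\in\N$ and $N=\sum_{i=1}^n p_i$. Since each $x_i$ is a convex point, property (2.4) gives $[p_i^{-1},x_i]_{j=1}^{p_i}=Kx_i=x_i$. Substituting this into $[q_i,x_i]_{i=1}^n$ and flattening by a repeated use of associativity (CC.ii) (cf. (2.1)) yields $[q_i,x_i]_{i=1}^n=[N^{-1},w_k]_{k=1}^N$, where $(w_1,\dots,w_N)$ lists each $x_i$ with multiplicity $p_i$; in particular $\sum_{k=1}^N\varphi(w_k)=\sum_{i=1}^n p_i\varphi(x_i)$. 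Hence it suffices to prove
\begin{align*}
\varphi\big([N^{-1},w_k]_{k=1}^N\big)\leqslant N^{-1}\sum_{k=1}^N\varphi(w_k)
\end{align*}
for every finite family $w_1,\dots,w_N$ of convex points of $\X$.

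When $N=2^m$ this follows by induction on $m$: the case $m=1$ is exactly midpoint convexity, and for the inductive step one writes, using (CC.ii), $[2^{-(m+1)},w_k]_{k=1}^{2^{m+1}}$ as the midpoint of $[2^{-m},w_k]_{k=1}^{2^m}$ and $[2^{-m},w_k]_{k=2^m+1}^{2^{m+1}}$, then applies midpoint convexity followed by the induction hypothesis. For general $N$, choose $m$ with $M:=2^m\geqslant N$, set $\bar w:=[N^{-1},w_k]_{k=1}^N$ (which is a convex point by property (2.3), since $K\bar w=[N^{-1},Kw_k]_{k=1}^N=\bar w$), and put $w_{N+1}=\dots=w_M=\bar w$. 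Regrouping the first $N$ and the last $M-N$ terms by (CC.ii) and collapsing repeated copies of the convex point $\bar w$ via property (2.4), one gets $[M^{-1},w_k]_{k=1}^M=\bar w$, so the dyadic case gives
\begin{align*}
\varphi(\bar w)=\varphi\big([M^{-1},w_k]_{k=1}^M\big)\leqslant M^{-1}\Big(\sum_{k=1}^N\varphi(w_k)+(M-N)\varphi(\bar w)\Big),
\end{align*}
which rearranges to $\varphi(\bar w)\leqslant N^{-1}\sum_{k=1}^N\varphi(w_k)$, proving the rational-weight claim.

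For the last assertion, assume in addition that $\varphi$ is lower semicontinuous and let $r_1,\dots,r_n>0$ with $\sum_i r_i=1$. Choose, for each $k$, positive rationals $q_i^{(k)}$ summing to $1$ with $q_i^{(k)}\to r_i$ as $k\to\infty$. By the joint continuity of the convex combination operation (property (2.2)), $[q_i^{(k)},x_i]_{i=1}^n\to[r_i,x_i]_{i=1}^n$, so lower semicontinuity of $\varphi$ together with the rational case gives $\varphi([r_i,x_i]_{i=1}^n)\leqslant\liminf_k\varphi([q_i^{(k)},x_i]_{i=1}^n)\leqslant\liminf_k\sum_i q_i^{(k)}\varphi(x_i)=\sum_i r_i\varphi(x_i)$. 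The main obstacle is the combinatorial bookkeeping in the reduction and in the identity $[M^{-1},w_k]_{k=1}^M=\bar w$: since $\X$ carries no genuine addition, every regrouping must be justified through (CC.ii), (2.1), (2.3), (2.4), and it is precisely the convex-point hypothesis that makes both $[p_i^{-1},x_i]_{j=1}^{p_i}=x_i$ and the collapsing of repeated copies legitimate; once these are in place the inequalities themselves are routine. (Alternatively, as all points involved lie in $K(\X)$, one could transfer the problem to the Banach space $\mathbb E$ through Theorem 3.3 and invoke the corresponding Banach-space statement; we prefer the direct argument above.)
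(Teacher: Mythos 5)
Your proof is correct and follows essentially the same route as the paper's: the dyadic case by induction on midpoint convexity, the general equal-weight case via the self-referential barycenter trick (you pad up to the next power of two where the paper descends from $2^k$ one step at a time), rational weights via multiplicities collapsed by the convex-point properties (2.4)/(2.5), and real weights by rational approximation plus lower semicontinuity. The only cosmetic differences are the direction of the Cauchy forward--backward induction and your choice of positive rationals summing exactly to $1$, where the paper instead uses increasing rationals together with a slack point $a\in K(\X)$; both variants are valid.
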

\begin{proof}
The first case, we present the proof of inequality above when $q_i=1/n, i=1, \ldots , n$. Namely, we now prove that 
\begin{align*}
\varphi([n^{-1}, x_i]_{i=1}^n)\leqslant n^{-1}\sum_{i=1}^n \varphi(x_i).\tag{5.2}
\end{align*}
The proof of (5.2) is by induction on $n$. If $n=2$, (5.2) holds clearly by definition of midpoint convex function. Suppose that (5.2) holds for $n=2^k$ $(k\in \N)$, we will prove that (5.2) also holds with $n=2^{k+1}$. Indeed, for $\{x_1, x_2, \ldots , x_{2^{k+1}}\}\subset K(\X)$, we obtain
\begin{align*}
\varphi\Big(\big[2^{-(k+1)}, x_i\big]_{i=1}^{2^{k+1}}\Big)&=\varphi\Big(\Big[1/2, \big[2^{-k}, x_i\big]_{i=1}^{2^k}\,; 1/2, \big[2^{-k}, x_i\big]_{i=2^k+1}^{2^{k+1}}\Big]\Big)\leqslant \frac{1}{2}\varphi\Big(\big[2^{-k}, x_i\big]_{i=1}^{2^k}\Big)
+ \frac{1}{2}\varphi\Big(\big[2^{-k}, x_i\big]_{i=2^k+1}^{2^{k+1}}\Big)\\
&\leqslant \frac{1}{2^{k+1}}\sum_{i=1}^{2^k}\varphi(x_i)+ \frac{1}{2^{k+1}}\sum_{i=2^k+1}^{2^{k+1}}\varphi(x_i)=\frac{1}{2^{k+1}}\sum_{i=1}^{2^{k+1}}\varphi(x_i).
\end{align*}
Therefore, inequality (5.2) holds for all $n=2^k$ $(k\in \N)$. Moreover, when $n$ has form $2^k$, (5.2) holds not only for $\{x_i\} \subset K(\X)$ but also for $\{x_i\}\subset \X$.  The next step, we will prove that if (5.2) is satisfied for $n>2$ then it is also satisfied for $n-1$. Now let $\{x_1, x_2, \ldots, x_{n-1}\}\subset K(\X)$ and denote $x_n=[(n-1)^{-1}, x_i]_{i=1}^{n-1}\in K(\X)$, it follows from properties (CC.i), (CC.ii), (2.5) and induction hypothesis that
\begin{align*}
\varphi([(n-1)^{-1}, x_i]_{i=1}^{n-1})&=\varphi\left(\left[n^{-1}, x_1 ; n^{-1}, x_2 ; \ldots ; n^{-1}, x_{n-1} ; n^{-1}, \left[(n-1)^{-1}, x_i\right]_{i=1}^{n-1}\right]\right)\\
&=\varphi([n^{-1}, x_i]_{i=1}^n) \leqslant \frac{1}{n}\sum_{i=1}^n\varphi(x_i)= \frac{1}{n}\sum_{i=1}^{n-1}\varphi(x_i)+\frac{1}{n}\varphi(x_n)\\
&=\frac{1}{n}\sum_{i=1}^{n-1}\varphi(x_i)+\frac{1}{n}\varphi([(n-1)^{-1}, x_i]_{i=1}^{n-1}).
\end{align*}
This implies that
$$\varphi([(n-1)^{-1}, x_i]_{i=1}^{n-1})\leqslant \frac{1}{n-1}\sum_{i=1}^{n-1}\varphi(x_i).$$
The second case, when each $q_i$ is rational, it can be expressed as $q_i=k_i/m$, where $m, k_i$ are natural numbers for all $i=1, \ldots, n$. Then, we have
\begin{align*}
\varphi([q_i, x_i]_{i=1}^n)&=\varphi([k_i/m, x_i]_{i=1}^n)\\
&=\varphi\big([\underbrace{m^{-1}, x_1 ; \ldots ; m^{-1}, x_1}_{k_1 \mbox{ times}} ; \ldots ; \underbrace{m^{-1}, x_n ; \ldots ; m^{-1}, x_n}_{k_n\mbox{ times}}]\big)\;\mbox{ (by (2.5))}\\
&\leqslant \frac{k_1}{m}\varphi(x_1)+\cdots+ \frac{k_n}{m}\varphi(x_n)=\sum_{i=1}^n q_i \varphi(x_i)\;\mbox{ (by (5.2))}.
\end{align*}
For the remaining conclusion, when $\varphi$ is lower continuous and $r_i>0$. Then, each positive real number $r_i$ is the limit of some sequence of positive and increasing rational numbers $\{q_{ij}\}_{j=1}^\infty$. Thus, by the continuity of convex combination operation, we obtain
\begin{align*}
\varphi([r_i, x_i]_{i=1}^n)&=\varphi\big(\lim_{j\to \infty}[q_{1j}, x_1 ; \ldots; q_{nj}, x_n ; 1-(q_{1j}+\cdots+q_{nj}), a]\big)\;\mbox{ (for some }a\in K(\X))\\
&\leqslant \liminf_{j\to \infty} \varphi([q_{1j}, x_1 ; \ldots; q_{nj}, x_n ; 1-(q_{1j}+\cdots+q_{nj}), a])\\
&\leqslant \liminf_{j\to \infty}\left(q_{1j}\varphi(x_1)+\cdots+q_{nj}\varphi(x_n)+(1-(q_{1j}+\cdots+q_{nj}))\varphi(a)\right)\;\mbox{ (by the second case)}\\
&=\lim_{j\to \infty}\left(q_{1j}\varphi(x_1)+\cdots+q_{nj}\varphi(x_n)+(1-(q_{1j}+\cdots+q_{nj}))\varphi(a)\right)\\
&=r_1\varphi(x_1)+\cdots+r_n\varphi(x_n).
\end{align*}
Combining above arguments, the proposition is proved.
\end{proof}

Since the embedding theorem is only available for convexifiable domain $K(\X)$ while initial conditions are usually imposed on CC space $\X$, it is necessary to estimate quantities in $\X$ with themselves in $K(\X)$ after affecting convexification operation. The following proposition is such a result.
\begin{prop}
Let $\mathcal K$ be a compact subset of $\X$ and $\{x_n, n\geqslant 1\} \subset \mathcal K$. Then, $d\big([n^{-1}, x_i]_{i=1}^n, [n^{-1}, Kx_i]_{i=1}^n\big)\to 0$ as $n \to \infty$.
\end{prop}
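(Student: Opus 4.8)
The plan is to reduce, by compactness of $\mathcal K$, to the situation where the points $x_i$ take values in a finite set, and then to feed in the defining convergence $[n^{-1},u]_{i=1}^n\to Ku$ coming from axiom (CC.v). The one genuinely delicate point is that a fixed point of the net may be ``visited'' by the sequence $\{x_n\}$ either a growing number of times or only a bounded number of times, and these two regimes have to be separated.

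Fix $\varepsilon>0$. Using compactness of $\mathcal K$, choose a finite $\varepsilon$-net $\{y_1,\dots,y_m\}\subset\X$ and, for each $i$, an index $\sigma(i)\in\{1,\dots,m\}$ with $d(x_i,y_{\sigma(i)})<\varepsilon$. Inserting the intermediate elements $[n^{-1},y_{\sigma(i)}]_{i=1}^n$ and $[n^{-1},Ky_{\sigma(i)}]_{i=1}^n$ and applying the triangle inequality, the quantity to be estimated is bounded by the sum of three terms. The first, $d([n^{-1},x_i]_{i=1}^n,[n^{-1},y_{\sigma(i)}]_{i=1}^n)$, is at most $n^{-1}\sum_i d(x_i,y_{\sigma(i)})<\varepsilon$ by the extended form of (CC.iv); the third, $d([n^{-1},Kx_i]_{i=1}^n,[n^{-1},Ky_{\sigma(i)}]_{i=1}^n)$, is likewise $<\varepsilon$ by (CC.iv) together with the non-expansiveness (2.6) of $K$.

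It remains to control the middle term. Writing $n_j$ for the number of indices $i$ with $\sigma(i)=j$ (so $\sum_j n_j=n$), commutativity (CC.i) and iterated associativity (CC.ii), in the form (2.1), regroup
$$[n^{-1},y_{\sigma(i)}]_{i=1}^n=\Big[\tfrac{n_j}{n},\,[n_j^{-1},y_j]_{k=1}^{n_j}\Big]_{j:\,n_j\geqslant1},$$
and likewise $[n^{-1},Ky_{\sigma(i)}]_{i=1}^n=[\tfrac{n_j}{n},[n_j^{-1},Ky_j]_{k=1}^{n_j}]_{j:\,n_j\geqslant1}=[\tfrac{n_j}{n},Ky_j]_{j:\,n_j\geqslant1}$, the last equality because $[n_j^{-1},Ky_j]_{k=1}^{n_j}=K(Ky_j)=Ky_j$ by idempotence and (2.4). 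The extended (CC.iv) then gives
$$d\big([n^{-1},y_{\sigma(i)}]_{i=1}^n,[n^{-1},Ky_{\sigma(i)}]_{i=1}^n\big)\leqslant\sum_{j:\,n_j\geqslant1}\frac{n_j}{n}\,d\big([n_j^{-1},y_j]_{k=1}^{n_j},\,Ky_j\big).$$

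Finally — and this is where the argument really lives — I would split this last sum according to the size of $n_j$. By (CC.v), for each of the finitely many $j$ one has $d([\ell^{-1},y_j]_{k=1}^{\ell},Ky_j)\to0$ as $\ell\to\infty$; hence there is an $N_0$ with $d([\ell^{-1},y_j]_{k=1}^{\ell},Ky_j)<\varepsilon$ for all $\ell\geqslant N_0$ and all $j\leqslant m$, and we set $C=\max\{d([\ell^{-1},y_j]_{k=1}^{\ell},Ky_j):1\leqslant\ell<N_0,\ 1\leqslant j\leqslant m\}<\infty$. The part of the sum over $j$ with $n_j\geqslant N_0$ is then $\leqslant\varepsilon\sum_j n_j/n=\varepsilon$, and the part over $j$ with $1\leqslant n_j<N_0$ is $\leqslant mN_0C/n$, which tends to $0$ as $n\to\infty$. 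Collecting the three terms yields $\limsup_n d([n^{-1},x_i]_{i=1}^n,[n^{-1},Kx_i]_{i=1}^n)\leqslant 3\varepsilon$, and since $\varepsilon>0$ is arbitrary the proposition follows. I expect the regrouping via (2.1)/(2.4) and the ``few visits versus many visits'' dichotomy in the final sum to be the only substantive steps; the rest is a routine use of (CC.iv) and (2.6).
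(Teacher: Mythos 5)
Your proof is correct and follows essentially the same route as the paper's: reduce to a finite $\varepsilon$-net via compactness, control the outer terms with the extended (CC.iv) and (2.6), regroup the net-valued averages by multiplicities using (CC.i), (2.1) and (2.4), and split the resulting sum according to whether a net point is visited at least $N_0$ times (handled by (CC.v)) or fewer (a term of order $1/n$). The only cosmetic difference is that the paper absorbs the low-multiplicity part into $\varepsilon/m$ by a specific choice of threshold $n'_{\varepsilon,m}$ instead of letting it vanish as $O(1/n)$.
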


\begin{proof}
For $\varepsilon >0$ arbitrarily, there exists a  finite collection $\{t_1,\ldots,t_m\}$ of elements of $\mathcal K$ such that $\mathcal K \subset \cup_{i=1}^m B(t_i, \varepsilon)$, where $B(u, r)=\{x\in \X : d(u, x)<r\}$. Denote $A_1=\mathcal K \cap B(t_1, \varepsilon), \ldots, A_l =\mathcal K \cap B(t_l,\varepsilon)\cap\big\{\cup_{k=1}^{l-1}B(t_k,\varepsilon)\big\}^c$, $l=2,\ldots, m$. For each $n$, let us define $y_n=t_l$ if $x_n\in A_l$, so $d(x_n, y_n)<\varepsilon$ for all $n$. By triangular inequality and (2.6),
\begin{align*}
d\big([n^{-1}, x_i]_{i=1}^n, [n^{-1}, Kx_i]_{i=1}^n\big) &\leqslant d\big([n^{-1}, x_i]_{i=1}^n, [n^{-1}, y_i]_{i=1}^n\big)+ d\big([n^{-1}, y_i]_{i=1}^n, [n^{-1}, Ky_i]_{i=1}^n\big)\\
&\;\;\;\;\;+ d\big([n^{-1}, Ky_i]_{i=1}^n, [n^{-1}, Kx_i]_{i=1}^n\big)\\
&\leqslant 2 n^{-1} \sum_{i=1}^n d(x_i, y_i)+ d\big([n^{-1}, y_i]_{i=1}^n, [n^{-1}, Ky_i]_{i=1}^n\big)\leqslant 2\varepsilon + (I_n).
\end{align*} 
We now show that $(I_n)=d\big([n^{-1}, y_i]_{i=1}^n, [n^{-1}, Ky_i]_{i=1}^n\big) \to 0$ as $n\to \infty$. For each $l=1, \ldots, m$, put 
\begin{align*}
z_{l,n}=\mbox{card}\{1\leqslant i \leqslant n : y_i =t_l\},\; \mbox{ and } \mathcal T_n=\{l : 1\leqslant l \leqslant m, z_{l, n}>0\},\;n\geqslant 1.
\end{align*}
Then, $\{z_{l,n}, n\geqslant 1\}$ is the non-decreasing sequence for each $l$. 
By (CC.i) and property (2.1), we obtain
  \begin{align*}
[n^{-1}, y_i]_{i=1}^n=\big[n^{-1}z_{l, n},\big[z_{l, n}^{-1}, t_l\big]_{i=1}^{z_{l, n}}\big]_{l\in \mathcal T_n} \;\mbox{ and }\;[n^{-1}, Ky_i]_{i=1}^n=\big[n^{-1}z_{l, n},\big[z_{l, n}^{-1}, Kt_l\big]_{i=1}^{z_{l, n}}\big]_{j\in \mathcal T_n}=\big[n^{-1}z_{l, n}, Kt_l\big]_{l\in \mathcal T_n}.
 \end{align*}
For each $l=1, \ldots, m$, we have
$\lim_{n\to \infty}d([n^{-1}, t_l]_{i=1}^n, Kt_l)= 0$ by the definition of $K$. Thus, there exists $n_{\varepsilon, m}\in \mathbb N$ such that for all $n\geqslant n_{\varepsilon, m}$ and for all $l=1,\ldots, m,$
\begin{align*}
d([n^{-1}, t_l]_{i=1}^n, Kt_l)<\frac{\varepsilon}{m}.\tag{5.3}
\end{align*}
We put
$$N_{l,\varepsilon, m}=\max_{1\leqslant k < n_{\varepsilon, m}}d\big([k^{-1}, t_l]_{i=1}^k, Kt_l\big),\;N_{\varepsilon, m}=\max_{1\leqslant l \leqslant m}N_{l,\varepsilon, m}$$
and choose the smallest integer number $n'_{\varepsilon, m}$ such that $n'_{\varepsilon, m}\geqslant \varepsilon^{-1} m.N_{\varepsilon, m}.n_{\varepsilon,m}.$ Now, for $n\geqslant n'_{\varepsilon, m}$:\\
- If $z_{l, n} \geqslant n_{\varepsilon, m}$, then it follows from (5.3) and $n^{-1}z_{l, n}\leqslant 1$ that

$$\frac{z_{l, n}}{n}d\Big(\big[z_{l, n}^{-1},t_l\big]_{i=1}^{z_{l, n}}, Kt_l\Big)<\frac{\varepsilon}{m}.$$
- If $0<z_{l, n}< n_{\varepsilon, m}$, then
\begin{align*}
\frac{z_{l, n}}{n}d\Big(\big[z_{l, n}^{-1}, t_l\big]_{i=1}^{z_{l, n}}, Kt_l\Big)< \frac{n_{\varepsilon, m}}{n'_{\varepsilon, m}}.N_{\varepsilon, m}\leqslant \frac{\varepsilon}{m}.
\end{align*}
Hence, for all $n\geqslant n'_{\varepsilon, m}$
$$\frac{z_{l, n}}{n}d\Big(\big[z_{l, n}^{-1}, t_l\big]_{i=1}^{z_{l, n}},Kt_l\Big)\leqslant \frac{\varepsilon}{m}.$$
This implies that
\begin{align*}
(I_n)\leqslant \sum_{l\in \mathcal T_n}\frac{z_{l, n}}{n}d\Big(\big[z_{l, n}^{-1}, t_l\big]_{i=1}^{z_{l, n}},Kt_l\Big)\leqslant \varepsilon
\end{align*}
for all $n\geqslant n'_{\varepsilon, m}$, so $(I_n)\to 0$ as $n\to \infty$. The proof is completed. 
\end{proof}



\begin{center}
\textbf{Acknowledgements}
\end{center}

The author wishes to thank Assoc. Prof. Pedro Ter\'{a}n (Escuela Polit\'ecnica de Ingenier\'ia, Departamento de Estad\'istica e I.O. y D.M., Universidad de Oviedo, E-33071 Gij\'on, Spain) for helpful discussions.
The author also would like to thank Dr. Tobias Fritz (Perimeter Institute for Theoretical Physics, Waterloo ON, Canada) for letting us know the reference \cite{CF}.

\end{document}